\numberwithin{equation}{section}
\def\bbR{\mathbb{R}}
\def\bbZ{\mathbb{Z}}
\newtheorem{Theorem}{Theorem}[section]
\newtheorem{Proposition}[Theorem]{Proposition}
\newtheorem{Lemma}[Theorem]{Lemma}
\newtheorem{Example}[Theorem]{Example}
\newtheorem{Definition}[Theorem]{Definition}
\newtheorem{Remark}[Theorem]{Remark}
\newtheorem{Corollary}[Theorem]{Corollary}
\def\rmd{\mathrm{d}}
\begin{document}

\parindent 2em
\title{Nonuniform Sobolev Spaces}

\thanks{T. Chen was supported by the
National Natural Science Foundation of China (No. 12271267)
and the Fundamental Research Funds for the Central Universities.}

\thanks{L. Grafakos was supported by  a
Simons Grant (No. 624733). }

\thanks{W. Sun was supported by the
National Natural Science Foundation of China (No. 12171250 and U21A20426).}

 \author{Ting Chen}
\address{School of Mathematical Sciences and LPMC,
  Nankai University, Tianjin,  China}
\email{t.chen@nankai.edu.cn}

 \author{Loukas Grafakos}
\address{Department of Mathematics, University of Missouri, Columbia, MO 65211, USA}

\email{grafakosl@missouri.edu}

 \author{Wenchang Sun}
\address{School of Mathematical Sciences and LPMC,
  Nankai University, Tianjin,  China}
\email{sunwch@nankai.edu.cn}

\date{}
\begin{abstract}
We study nonuniform Sobolev spaces, i.e., spaces    of functions
whose partial   derivatives   lie in possibly different Lebesgue spaces.
Although standard   proofs do not apply, we show that nonuniform Sobolev spaces share similar
properties as the classical ones.  These spaces arise naturally in the study of certain PDEs.
For instance, we illustrate that
nonuniform fractional Sobolev spaces  are useful in
the study of local estimates for solutions of
heat equations
and the convergence of  Schr\"odinger operators.
In this work we extend recent advances on
local energy estimates for solutions of heat equations
and the
convergence of Schr\"odinger operators
to  nonuniform fractional Sobolev spaces.
\end{abstract}

\subjclass[2020]{46E35}
\keywords{Sobolev spaces, Nonuniform Sobolev spaces,
fractinoal Sobolev spaces, embedding theorems,
heat equations,
Schr\"odinger operators.}

\maketitle

\section{Introduction}

Given a  positive integer $m$,
a positive number $p\in[1,\infty]$
and an open set $\Omega\subset \bbR^N$,
we denote by
\[
W^{m,p}(\Omega) = \{f\in L^p(\Omega):\,  D^{\alpha}  f\in L^p(\Omega), |\alpha|\le m\}  ,
\]
the classical Sobolev space, where
$\alpha =(\alpha_1,\ldots,\alpha_N)$ is a multi-index,
$|\alpha| = \sum_{i=1}^N \alpha_i$,
and
$D^{\alpha} $ denotes the  weak partial derivatives,
that is, partial derivatives in the sense of distributions.
The theory of Sobolev spaces plays an import role in the study of
partial differential equations and many other fields.
We refer to  text books by
Adams and Fournier~\cite{AdamsFournier2003},
Demengel and Demengel~\cite{Demengel2012},
Evans~\cite{Evans2010},
Grafakos~\cite{Grafakos2014,Grafakos2014m},
Leoni~\cite{Leoni2017},
Pi\c{s}kin and Okutmu\c{s}tur~\cite{Pi2021}
for an overview of Sobolev spaces and applications in PDEs
and in harmonic analysis.

Since a function and its derivatives might have different properties,
it is not necessary for them to lie in the same Lebesgue space.
In this paper, we consider {\em nonuniform Sobolev    spaces} on $\bbR^N$, i.e., spaces
for which a function and its derivatives belong
to different Lebesgue spaces.

\begin{Definition}
Let $\Omega\subset\bbR^N$ be an open set.
For $k\ge 1$ and $\vec p = (p_0, \ldots,p_k)\in [1,\infty]^{k+1}$,
the nonuniform Sobolev space
$W_k^{\vec p}(\Omega)$ consists of all measurable functions $f$
for which  $\partial^{\alpha}f\in L^{p_{|\alpha|}}(\Omega)$, where $|\alpha|\le k$.
For $f\in W_k^{\vec p}$, define its norm by
\[
  \|f\|_{W_k^{\vec p}} = \sum_{|\alpha|\le k}  \|  D^{\alpha } f\|_{L^{p_{|\alpha|}}}.
\]
\end{Definition}

These spaces naturally arise in the study of certain PDEs.
In particular, some equations have solutions in
nonuniform Sobolev spaces but have no solution
in the classical Sobolev spaces.
For example, consider positive solutions
of the critical $p$-Laplace equation
\begin{equation}\label{eq:p}
 \Delta_p u  + u^{p^*-1}=0
\end{equation}
in $\bbR^N$, where $1<p<N$, $p^* = Np/(N-p)$
and $\Delta_p u = \mathrm{div} (|\nabla u|^{p-2} \nabla u)$.

This equation is well studied in the literature.
It was shown
by Damascelli, Merch\'{a}n, Montoro and Sciunzi \cite{DamascelliMerchan2014},
Sciunzi \cite{Sciunzi2016} and V\'{e}tois \cite{Vetois2016}
that
if a function $u$ in the class
\[
  \mathcal D^{1,p}(\bbR^N) = \{u\in L^{p^*}(\bbR^N):\, \nabla u\in L^p(\bbR^N)\}
\]
is a solution of (\ref{eq:p}), then it is of the form
\[
  u(x) =
  U_{\lambda,x_0} (x) = \left(
  \frac{\lambda^{1/(p-1)} N^{1/p}((N-p)/(p-1))^{{(p-1)}/p}}
  {\lambda^{p/(p-1)}+|x-x_0|^{p/(p-1)}}
  \right)^{(N-p)/p},
\]
where $\lambda$ is a positive constant
and $x_0$ is a point in $\bbR^N$.
Moreover, Catino, Monticelli and Roncoroni \cite{CatinoMonticelliRoncoroni2023}
provided the classification of positive solutions to the critical
$p$-Laplace equation.
See also Ciraolo, Figalli and Roncoroni \cite{CiraoloFigalliRoncoroni2020}
for solutions  in convex cones.

Observe that $\mathcal D^{1,p}$ itself is
the nonuniform Sobolev space $W_1^{(p^*,p)}$.
Since $|U_{\lambda,x_0}(x)|\approx |x|^{-(N-p)/(p-1)}$
whenever  $|x|$ is large enough,
it is easy to see that (\ref{eq:p}) has a
positive  solution
in $W^{1,p}$ if and only if $1<p<N^{1/2}$.
However, we can increase the integrability index $p$ of the solution if we consider
nonuniform Sobolev spaces. Indeed, we obtain that
(\ref{eq:p}) has a positive solution
in $W_1^{(p_0,p)}$ whenever $1<p<N$ and $p_0 > N(p-1)/(N-p)$.

On the other hand, for  $1\le p_1< N$ and $f$ in the classical Sobolev space
$W^{1,p_1}$,
the Sobolev inequality says that
\begin{equation}\label{eq:e10}
  \|f\|_{L^{Np_1/(N-p_1)}}  \le C_{N,p_1} \|\nabla f\|_{L^{p_1}}.
\end{equation}
This is proved by showing that
\begin{equation}\label{eq:e11}
   \|f\|_{L^{Np_1/(N-p_1)}}^{(N-1)p_1/(N-p_1)} \le C_{N,p_1} \|\nabla f\|_{L^{p_1}}
    \|f\|_{L^{Np_1/(N-p_1)}}^{N(p_1-1)/(N-p_1)}
\end{equation}
for   compactly supported differentiable functions
$f$.
We refer to \cite{Demengel2012} for details.

For the nonuniform case, that is, for  $f\in C^1\cap W_k^{\vec p}$ with $\vec p =(p_0,p_1)$,
both (\ref{eq:e10}) and (\ref{eq:e11}) are still true.
However, their proofs are quite different
from the  uniform case.
In the classical uniform case, one first obtains the density of compactly supported differentiable functions,
then   derives (\ref{eq:e10}) from  (\ref{eq:e11})
for such functions, and finally  extends (\ref{eq:e10}) to
  all functions in $W^{1,p_1}$ by   density.
For the nonuniform case, compactly supported differentiable functions are still dense in $W_1^{\vec p}$. However,
the embedding inequality is required in the proof.
So we have to prove  (\ref{eq:e10})
directly for functions which is not compactly supported.
In this case,  (\ref{eq:e10}) is not a straightforward consequence of (\ref{eq:e11}):
we have to show first that $f\in L^{Np_1/(N-p_1)}$.

In some applications, only the   derivatives
are concerned. For example,
Fefferman, Israel and Luli
\cite{FeffermanIsraelLuli2014b,FeffermanIsraelLuli2014}
studied Sobolev extension operators
for homogeneous Sobolev spaces.
In this case, it is natural to consider nonuniform
Sobolev spaces.

Another application we discuss concerns   local estimates for
solutions of   heat equations.
We obtain   local energy estimates
for initial data in nonuniform Sobolev spaces; this
  extends the local estimates
by Fefferman,  McCormick, Robinson and Rodrigo \cite{FeffermanMcCormickRobinsonRodrigo2017}.

Moreover, nonuniform Sobolev spaces are also
useful in the  study of the
 convergence of Schr\"odinger operators defined by
\[
  e^{it(-\Delta)^{a/2}}f(x):=  \frac{1}{(2\pi)^N}\int_{\bbR^N} e^{i(x\cdot\omega + t |\omega|^a )}
     \hat f(\omega) \rmd \omega,
\]
where $a> 1$ is a constant.
It is well known that for $f$ nice enough, $e^{it(-\Delta)^{a/2}}f(x)$
is the solution of the fractional Schr\"odinger equation
\[
  \begin{cases}
  i \partial_t u  + (-\Delta_x)^{a/2}u  = 0,  & (x,t)\in \bbR^N\times R,\\
  u(x,0) =f(x), & x\in\bbR^N.
  \end{cases}
\]

For the case $N=1$ and $a=2$,
Carleson~\cite{Carleson1980} studied the convergence of $e^{-it\Delta}f(x)$ as $t$ tends to $0$ for
functions $f$ in the Sobolev space
\[
  H^s(\bbR^N):= \{f\in L^2:\, (1+|\omega|^2)^{s/2} \hat f(\omega) \in L^2   \}.
\]
It was shown \cite{Carleson1980} that when $a=2$,
\begin{equation}\label{eq:s2:e1}
  \lim_{t\rightarrow 0}e^{it(-\Delta)^{a/2}}f(x) = f(x),\quad a.e.
\end{equation}
for all $f\in H^s(\bbR)$ if $s\ge 1/4$.

Since then, many works have appeared on this topic.
Dahlberg and Kenig \cite{DahlbergKenig1982} provided counterexamples  indicating that
the range $s\ge 1/4$ is sharp for $N=1$.
And Sj\"olin \cite{Sjolin1987}
extended this result to the   case $a>1$.

For $a=2$ and higher dimensions $N\ge 2$,
Sj\"olin \cite{Sjolin1987}  and
 Vega \cite{Vega1988}
 proved that (\ref{eq:s2:e1}) is true when $s>1/2$.
For the case $N=2$, this result was improved to
$s>3/8$ by Lee \cite{Lee2006}.
Bourgain ~\cite{Bourgain2013,Bourgain2016} proved that
$s> 1/2 - 1/(4N)$ is sufficient
and
$s\ge 1/2 - 1/(2N+2)$ is necessary
for the convergence.
Du, Guth and Li \cite{DuGuthLi2017} proved that
$s> 1/2 - 1/(2N+2)$ is sufficient for the dimension $N=2$
and Du and Zhang \cite{DuZhang2019} showed that it is also true
for general
$N\ge 3$.

For the general case $a>1$ and $N\ge 2$,
Sj\"olin \cite{Sjolin1987}
 proved that (\ref{eq:s2:e1}) is valid when $s>1/2$.
Prestini \cite{Prestini1990} showed that
$s\ge 1/4$ is necessary and sufficient for radial functions $f$
and dimensions $N\ge 2$.
Cho and Ko \cite{ChoKo2022} proved that
$s>1/3$ is sufficient  for
the dimension $N=2$.

Related works also include the
non-tangential convergence by Shiraki \cite{Shiraki2020}, Yuan, Zhao
 and Zheng \cite{YuanZhaoZheng2021}, Li, Wang and Yan \cite{LiWang2021,LiWangYan2021},
the convergence along curves by Cao and Miao \cite{CaoMiao2023} and Zheng \cite{Zheng2019},
the Hausdorff dimension of
the divergence set by Li, Li and Xiao \cite{LiLiXiao2021},
and the convergence rate by Cao, Fan and Wang \cite{CaoFanWang2018};
see also the works by Cowling \cite{Cowling1983},
Walther \cite{Walther1995}, and Rogers and Villarroya \cite{RogersVillarroya2008}
 for the case $a<1$.

Although the  range   $s\ge 1/4$ of the index $s$  is sharp
 for $N=1$,
and $s> 1/2 - 1/(2N+2)$ is also sharp up to the endpoint
for $N\ge 2$,
we show that the result can be further extended when
nonuniform Sobolev spaces are considered.

Suppose that $0<s<1$ and $1\le p<\infty$.
Recall that the  classical fractional Sobolev space   $W_s^p$
consists of all measurable functions $f$ for which
\[
  \|f\|_{W_s^p} :=  \|f\|_{L^p}  +  [f]_{W_s^p}<\infty,
\]
where
\[
  [f]_{W_s^p} :=
   \bigg(\iint_{\bbR^N\times \bbR^N}
  \frac{|f(x)-f(y)|^{p}}{|x-y|^{N+sp}} \rmd x\rmd y
 \bigg)^{1/p}
\]
is the Gagliardo seminorm of $f$.
Fractional Sobolev spaces
were introduced by Aronszajn \cite{Aronszajn1955}, Gagliardo  \cite{Gagliardo1958}
and  Slobodecki\u{\i} \cite{Slobodeckij1958}.
We refer to the text book by Leoni~\cite{Leoni2023}
and  papers by
Brezis and Mironescu \cite{BrezisMironescu2001}
and Di Nezza~\cite{DiNezza2012} for an overview of
fractional Sobolev spaces.
See also recent papers
by Brezis,  Van Schaftingen  and   Yung
\cite{BrezisVanSchaftingenYung2021}
and
Gu and Yung
\cite{GuYung2021}
for the limit of
norms of fractional Sobolev spaces.

We now provide a formal definition of nonuniform  fractional Sobolev spaces.
\begin{Definition}\label{def:D1}
Given $0<s<1$ and  $\vec p = (p_0, p_1)\in [1,\infty)^2$,
the nonuniform  fractional Sobolev space is defined by
\begin{align*}
W_s^{\vec p}(\bbR^N) &= \{f:\,  \|f\|_{W_s^{\vec p}}
   :=\|f\|_{L^{p_0}} + [f]_{W_s^{p_1}} <\infty\}.
\end{align*}

For the case $s>1$, set $\nu_s = s - \lfloor s\rfloor$
and
 $\vec p = (p_0,\ldots, p_{\lceil s\rceil})$.
If $s$ is not an integer, define
\[
  W_s^{\vec p}(\bbR^N) = \Big \{f:\, \|f\|_{W_s^{\vec p}}:=
    \sum_{l=0}^{\lfloor s\rfloor}  \sum_{|\alpha|=l}\|D^{\alpha}f\|_{L^{p_l}}
     +  \sum_{|\alpha|=\lfloor s\rfloor}
      [D^{\alpha}f]_{W_{\nu_s}^{p_{\lceil s\rceil}}}<\infty\Big\}.
\]
Here $\lfloor s\rfloor$ stands for the greatest integer which is less than or equal to $s$,
and $\lceil s\rceil$ stands for the ceiling, i.e., the least
 integer which is greater than or equal to $s$.
\end{Definition}

Recall that if $p_0=\ldots = p_{\lceil s\rceil}=2$,
then $W_s^{(2,\ldots,2)}(\bbR^N)$ coincides with $ H^s(\bbR^N)$.
We refer to \cite[Proposition 4.17]{Demengel2012} for a proof.

The introduction and study of these spaces is   motivated by the fact  that they   provide
stronger results than classical Sobolev spaces. As an application
we show that if the convergence of  Schr\"odinger operators
holds for all functions in $H^s$ for some $s>0$,
then it also holds for all functions in $W_s^{\vec p}$
with the same index $s$ if
$p_{\lceil s\rceil}=2$.
Moreover, we also obtain convergence results in the
case $1<p_{\lceil s\rceil}< 2$.

The paper is organized as follows.
In Section 2, we show that   compactly supported
infinitely differentiable functions are dense in
nonuniform Sobolev spaces.
We also obtain the Sobolev inequality
for functions in nonuniform Sobolev spaces.
In Section 3, we focus on nonuniform fractional
Sobolev spaces and present an embedding theorem
for such spaces.
And in Section 4, we give two applications of
nonuniform Sobolev spaces.
We give local energy estimates for solutions
of the heat equation with initial data
in nonuniform Sobolev spaces.
And we  prove the almost everywhere convergence
of Schr\"odinger operators for a large class
of functions,   extending known results.

\textbf{Symbols and Notations}. \,
$\{e_i:\, 1\le i\le N\}$ stands for the canonical  basis for $\bbR^N$,
that is, $e_i = (0,\ldots,1,\ldots,0)\in\bbR^N$, where
only the $i$-th component is $1$ and all others are $0$.
For any $x=(x_1,\ldots,x_N)\in\bbR^N$, we have   $x= \sum_{i=1}^N x_ie_i$.
$B(0,R)$ stands for the ball $\{x\in\bbR^N:\, |x|<R\}$.

For a tempered distribution $f$ and a function $\varphi$
in the Schwartz class $\mathscr S$, the notation
$
   \langle f, \varphi\rangle
$
stands for the value of the action of $f$   on $\varphi$.

\section{Embedding Inequalities for Nonuniform
Sobolev Spaces}

Denote by $C_c^{\infty}(\bbR^N)$
the function space consisting  of all  compactly supported
infinitely
differentiable functions.
In this section we obtain
the density of $C_c^{\infty}(\bbR^N)$  in nonuniform Sobolev spaces.
We then obtain a Sobolev embedding theorem for nonuniform Sobolev spaces;
and the aforementioned density  is a  crucial tool in this embedding.
Both of these results seem to require   proofs that are quite different from the classical case.
For the nonuniform case, we need first to prove the
density for special indices. Then we
deduce an embedding result, with which we finally obtain
the density of $C_c^{\infty}(\bbR^N)$ for full indices.

We begin with a simple lemma.

\begin{Lemma}\label{Lm:density}
For any $k\ge 1$ and $\vec p\in [1,\infty)^{k+1}$,
 the space $C^{\infty}\cap W_k^{\vec p}(\bbR^N)$ is dense in $W_k^{\vec p}(\bbR^N)$.
\end{Lemma}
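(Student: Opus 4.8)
The plan is to use the standard mollification argument, being careful that the mollifier interacts correctly with all the different Lebesgue exponents simultaneously. Fix $f\in W_k^{\vec p}(\bbR^N)$ and let $\varphi\in C_c^\infty(\bbR^N)$ be a standard mollifier with $\int\varphi=1$, $\varphi\ge 0$, $\supp\varphi\subset B(0,1)$, and set $\varphi_\varepsilon(x)=\varepsilon^{-N}\varphi(x/\varepsilon)$. First I would note that for each multi-index $\alpha$ with $|\alpha|\le k$ one has $D^\alpha(f*\varphi_\varepsilon)=(D^\alpha f)*\varphi_\varepsilon$; this is the usual fact that convolution with a smooth compactly supported function commutes with weak differentiation, and it holds here because each $D^\alpha f\in L^{p_{|\alpha|}}(\bbR^N)\subset L^1_{loc}(\bbR^N)$, so $f$ and all its weak derivatives up to order $k$ are locally integrable. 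Hence $f*\varphi_\varepsilon\in C^\infty(\bbR^N)$.

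Next I would invoke the standard approximate-identity convergence in each $L^{p_j}$ separately: since $p_j<\infty$ for every $j$ (this is where the hypothesis $\vec p\in[1,\infty)^{k+1}$ is used), we have $g*\varphi_\varepsilon\to g$ in $L^{p_j}(\bbR^N)$ as $\varepsilon\to 0$ for every $g\in L^{p_j}(\bbR^N)$. Applying this with $g=D^\alpha f$ and the exponent $p_{|\alpha|}$ for each of the finitely many $\alpha$ with $|\alpha|\le k$, and summing, gives
\[
  \|f*\varphi_\varepsilon-f\|_{W_k^{\vec p}}
  = \sum_{|\alpha|\le k}\|(D^\alpha f)*\varphi_\varepsilon-D^\alpha f\|_{L^{p_{|\alpha|}}}\longrightarrow 0.
\]
We also have Young's inequality $\|g*\varphi_\varepsilon\|_{L^{p_j}}\le\|g\|_{L^{p_j}}$, so each $f*\varphi_\varepsilon$ lies in $W_k^{\vec p}(\bbR^N)$ and therefore in $C^\infty\cap W_k^{\vec p}(\bbR^N)$. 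This establishes the density.

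I do not expect a genuine obstacle here — this lemma is deliberately the "easy" half, in contrast to the density of $C_c^\infty$ which the introduction flags as requiring the embedding theorem. The only point demanding a little care is that mollification preserves membership in \emph{all} the spaces $L^{p_0},\dots,L^{p_k}$ at once, which is immediate from Young's inequality applied termwise; there is no coupling between the exponents because the norm is just a finite sum of individual $L^{p_{|\alpha|}}$ norms. One could alternatively remark that no truncation to compact support is performed, so $f*\varphi_\varepsilon$ need not be compactly supported, and that is exactly why this lemma gives only $C^\infty\cap W_k^{\vec p}$ rather than $C_c^\infty$; the truncation step is postponed to the later argument that uses the Sobolev inequality.
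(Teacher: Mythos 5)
Your proof is correct and follows essentially the same mollification argument as the paper: smooth by convolution with an approximate identity, commute $D^\alpha$ with the convolution, and use that translation is continuous in $L^{p_j}$ for each finite $p_j$ (the paper writes this out via Minkowski's inequality plus dominated convergence, which is precisely the proof of the approximate-identity convergence you invoke). The added remarks on Young's inequality and on why truncation is deferred are consistent with how the paper proceeds in Lemma~\ref{Lm:density:a} and Theorem~\ref{thm:density}.
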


\begin{proof}
Take some $f\in  W_k^{\vec p}$ and
$\varphi\in C_c^{\infty}$ such that
$\varphi$ is nonnegative and $\|\varphi\|_{L^1}=1$. Set
\[
  g_{\lambda} (x) = \int_{\bbR^N} f(y) \frac{1}{\lambda^N}\varphi(\frac{x-y}{\lambda})
    \rmd y, \qquad \lambda>0.
\]
We have $g_{\lambda}\in C^{\infty}$.
For any multi-index $\alpha$ with
$|\alpha|\le k$,  we have
\begin{align}
  \partial^{\alpha} g_{\lambda} (x)
   &= \int_{\bbR^N} \partial^{\alpha}f(y) \frac{1}{\lambda^N}\varphi(\frac{x-y}{\lambda})
    \rmd y   \label{eq:e1}\\
   &= \int_{\bbR^N} \partial^{\alpha}f(x-\lambda y) \varphi(y)
    \rmd y.   \nonumber
\end{align}
It follows from Minkowski's inequality and Lebesgue's dominated convergence
theorem that
\[
  \lim_{\lambda\rightarrow 0} \|\partial^{\alpha} g_{\lambda}
  -
   \partial^{\alpha} f \|_{L^{p_{|\alpha|}}}
\le
     \lim_{\lambda\rightarrow 0}
   \int_{\bbR^N}\| \partial^{\alpha}f(\cdot -\lambda y)
     - \partial^{\alpha}f\|_{L^{p_{|\alpha|}}}
      \varphi(y)
    \rmd y
   =0.
\]
This completes the proof.
\end{proof}

The proof of the density of $C_c^{\infty}$ in nonuniform Sobolev spaces
is split in two cases.
First, we consider special indices.

\begin{Lemma}\label{Lm:density:a}
Suppose that  $k\ge 1$ and $\vec p = (p_0,\ldots,p_k)$
with  $1/p_i \le 1/p_{i-1} + 1/N$, $1\le i\le k$.
Then the space $C_c^{\infty}(\bbR^N)$ is dense in $W_k^{\vec p}(\bbR^N)$.
\end{Lemma}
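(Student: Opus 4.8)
The plan is to combine Lemma~\ref{Lm:density} with a multiplication-by-cutoff argument. The one subtlety is that the error terms produced by the cutoff cannot be controlled by a crude H\"older inequality, and instead require iterated Poincar\'e inequalities on annuli, whose scaling exponents turn out to be exactly balanced by the hypothesis $1/p_i\le1/p_{i-1}+1/N$.

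By Lemma~\ref{Lm:density} it is enough to approximate a fixed $f\in C^{\infty}\cap W_k^{\vec p}(\bbR^N)$ by functions in $C_c^{\infty}(\bbR^N)$. I would fix $\eta\in C_c^{\infty}(\bbR^N)$ with $\eta\equiv1$ on $B(0,1)$ and $\supp\eta\subset B(0,2)$, set $\eta_R(x)=\eta(x/R)$ for $R\ge1$ (so that $\eta_R f\in C_c^{\infty}(\bbR^N)$), and prove $\|\eta_R f-f\|_{W_k^{\vec p}}\to0$ as $R\to\infty$. By the Leibniz rule, for $|\alpha|\le k$,
\[
  \partial^{\alpha}(\eta_R f)-\partial^{\alpha}f
   =(\eta_R-1)\,\partial^{\alpha}f+\sum_{0<\beta\le\alpha}\binom{\alpha}{\beta}\,\partial^{\beta}\eta_R\,\partial^{\alpha-\beta}f .
\]
The first term tends to $0$ in $L^{p_{|\alpha|}}$ by dominated convergence, since $\partial^{\alpha}f\in L^{p_{|\alpha|}}$, $|\eta_R-1|\le1$ and $\eta_R\to1$ pointwise. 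For $\beta\neq0$ we have $\supp\partial^{\beta}\eta_R\subset A_R:=B(0,2R)\setminus\overline{B(0,R)}$ and $\|\partial^{\beta}\eta_R\|_{L^{\infty}}\le C_{\eta}R^{-|\beta|}$, so the whole claim reduces to showing
\[
  R^{-|\beta|}\,\|\partial^{\alpha-\beta}f\|_{L^{p_{|\alpha|}}(A_R)}\longrightarrow0\qquad(R\to\infty)
\]
for all $0<\beta\le\alpha$ with $|\alpha|\le k$.

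The core is the following assertion, which I would prove by induction on $b$: \emph{if $g\in C^{\infty}(\bbR^N)$, $a,b\ge0$, $a+b\le k$, and $\partial^{\delta}g\in L^{p_{a+|\delta|}}$ for every $|\delta|\le b$, then $R^{-b}\|g\|_{L^{p_{a+b}}(A_R)}\to0$ as $R\to\infty$.} Taking $g=\partial^{\alpha-\beta}f$, $a=|\alpha-\beta|$, $b=|\beta|$ yields the display above. For $b=0$ the assertion is just $\|g\|_{L^{p_a}(A_R)}\to0$, which holds by dominated convergence since $g\in L^{p_a}$ with $p_a<\infty$ and $\mathbf{1}_{A_R}\to0$ pointwise. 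For the inductive step I would apply the Poincar\'e--Wirtinger inequality on $A_R$ in $L^{p_{a+b}}$ (valid since $g$ is smooth and $A_R$ is a bounded Lipschitz domain; by the dilation $x\mapsto Rx$ the constant is $O(R)$; when $N=1$, $A_R$ is a union of two intervals and one runs the argument on each component): with $\bar g_R$ the mean of $g$ over $A_R$,
\[
  \|g\|_{L^{p_{a+b}}(A_R)}\le CR\,\|\nabla g\|_{L^{p_{a+b}}(A_R)}+|\bar g_R|\,|A_R|^{1/p_{a+b}} .
\]
Each $\partial_m g$ satisfies the hypothesis of the assertion with $(a,b)$ replaced by $(a+1,b-1)$, so $R^{-b}\cdot CR\,\|\nabla g\|_{L^{p_{a+b}}(A_R)}\to0$ by the inductive hypothesis. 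For the remaining term, H\"older's inequality gives $|\bar g_R|\le|A_R|^{-1/p_a}\|g\|_{L^{p_a}(A_R)}$, whence
\[
  R^{-b}\,|\bar g_R|\,|A_R|^{1/p_{a+b}}\le C\,R^{\,-b+N(1/p_{a+b}-1/p_a)}\,\|g\|_{L^{p_a}(A_R)} ,
\]
and summing the hypothesis $1/p_i-1/p_{i-1}\le1/N$ over $a<i\le a+b$ gives $1/p_{a+b}-1/p_a\le b/N$, so the exponent of $R$ is $\le0$ and, for $R\ge1$, the right-hand side is $\le C\|g\|_{L^{p_a}(A_R)}\to0$. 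This closes the induction, hence $\eta_R f\to f$ in $W_k^{\vec p}$, and since $\eta_R f\in C_c^{\infty}(\bbR^N)$, Lemma~\ref{Lm:density} gives the density of $C_c^{\infty}(\bbR^N)$ in $W_k^{\vec p}(\bbR^N)$.

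The main obstacle is precisely the estimation of the cross terms $\partial^{\beta}\eta_R\,\partial^{\alpha-\beta}f$. The factor $\partial^{\alpha-\beta}f$ is only known to lie in $L^{p_{|\alpha-\beta|}}$, not in $L^{p_{|\alpha|}}$, and when $p_{|\alpha|}>p_{|\alpha-\beta|}$ --- which the hypothesis permits --- a direct H\"older inequality on the annulus goes the wrong way. Recovering the missing local integrability from the higher-order derivatives of $f$ by iterating Poincar\'e's inequality, the scaling gain matching $1/p_{|\alpha|}-1/p_{|\alpha-\beta|}\le|\beta|/N$ exactly, is what makes the argument work, and this is the reason the special relation among the $p_i$ is imposed in this lemma.
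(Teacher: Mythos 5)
Your proof is correct but takes a genuinely different route from the paper's. Both proofs begin identically: reduce to $f\in C^{\infty}\cap W_k^{\vec p}$ via Lemma~\ref{Lm:density}, multiply by a scaled cutoff, expand by Leibniz, and observe that the cross terms $\partial^{\beta}\eta_R\,\partial^{\alpha-\beta}f$ (with $|\beta|\ge1$) are the only issue when $p_{|\alpha|}<p_{|\alpha-\beta|}$. The paper handles these by exploiting a structural feature of the mollified approximant: since $g=f*\varphi_{\lambda}$ with $\varphi_{\lambda}\in L^1\cap L^{\infty}$, Young's inequality gives the \emph{extra} integrability $\partial^{\gamma}g\in L^r$ for every $r\ge p_{|\gamma|}$, after which a single H\"older inequality (pairing $\partial^{\gamma}g\in L^{p_{|\gamma|}}$ against $\partial^{\beta}\psi(\cdot/n)\in L^r$ with $1/r=1/p_{|\alpha|}-1/p_{|\gamma|}\le|\beta|/N$) closes the estimate. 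You avoid this mollification trick entirely: instead you prove, by induction on $|\beta|$, that $R^{-b}\|g\|_{L^{p_{a+b}}(A_R)}\to0$ whenever $\partial^{\delta}g\in L^{p_{a+|\delta|}}$ for $|\delta|\le b$, using the Poincar\'e--Wirtinger inequality on the annulus $A_R$ with dilation-scaled constant $O(R)$ plus a H\"older bound on the mean, the exponent count $-b+N(1/p_{a+b}-1/p_a)\le0$ being exactly where the hypothesis $1/p_i\le1/p_{i-1}+1/N$ enters (same role it plays in the paper). Your argument is slightly longer but more robust: it applies to an arbitrary smooth member of $W_k^{\vec p}$ without any auxiliary integrability coming from mollification, whereas the paper's H\"older step would fail for a generic smooth $f$ with $\partial^{\gamma}f\in L^{p_{|\gamma|}}$ only. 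The paper's route is shorter once one notices the Young-inequality gain. One minor technical point you handle correctly but should keep explicit in a write-up: for $N=1$ the annulus is disconnected, so Poincar\'e must be applied componentwise (or one simply notes the hypothesis $1/p_i\le1/p_{i-1}+1$ is vacuous there).
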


\begin{proof}
By Lemma~\ref{Lm:density}, for any
$f\in W_k^{\vec p}$ and $\varepsilon>0$,
there is some $g\in C^{\infty}\cap W_k^{\vec p}$ such that
\[
  \|g - f\|_{W_k^{\vec p}} < \varepsilon.
\]
Moreover, if follows  from (\ref{eq:e1}) and Young's inequality that we may choose
the function $g$ such that
\begin{equation}\label{eq:e2}
\partial^{\gamma}g \in L^r ,\qquad \forall\,\,\, r\ge p_{|\gamma|}.
\end{equation}

The proof will be complete if we can  show
that there is some $\tilde g\in C_c^{\infty}$ such that
\begin{equation}\label{eq:e3}
 \| g -\tilde g \|_{W_k^{\vec p}} < C'\varepsilon.
\end{equation}

Take some $\psi\in C_c^{\infty}$ such that
$\psi(x) =1$ whenever $|x|<1$.
Set
  $g_n(x) = \psi(x/n) g (x)$.
It remains to show that the sequence $\{g_n:\, n\ge 1\}$ converges
to $g $ in $W_k^{\vec p}$.

We see from the choice of $\psi$
that $\{g_n:\, n\ge 1\}$ converges
to $g $ in $L^{p_0}$.
On the other hand,
fix some multi-index $\alpha$ with $1\le |\alpha|\le k$.
Then $\partial^{\alpha} g_n(x)$ is the sum of $\psi(x/n) \partial^{\alpha} g (x)$
and terms like $(1/n^{|\beta|})\partial^{\beta}\psi(x/n)
  \partial^{\gamma} g (x)$, where $|\beta| + |\gamma|=|\alpha|$
and $|\beta |\ge 1$.

If $p_{|\alpha|}\ge p_{|\gamma|} $, we deduce from (\ref{eq:e2}) that
\begin{align*}
\Big\|\frac{1}{n^{|\beta|}} \partial^{\beta}\psi(\frac{\cdot }{n})
  \partial^{\gamma} g\Big\|_{L^{p_{|\alpha|}}}
&\le   \frac{1}{n^{|\beta|}} \cdot \| \partial^{\beta}\psi\|_{L^{\infty}}
   \|\partial^{\gamma} g 1_{\{|x|\ge n\}}\|_{L^{p_{|\alpha|}}}
   \\
&\rightarrow  0,  \qquad \mathrm{as}\,\, n\rightarrow \infty.
\end{align*}

If $p_{|\alpha|}< p_{|\gamma|} $, there is some $r>1$ such that
\[
  \frac{1}{p_{|\alpha|}} = \frac{1}{p_{|\gamma|}} + \frac{1}{r}.
\]
Applying H\"older's inequality, we have
\begin{align}
\Big\|\frac{1}{n^{|\beta|}} \partial^{\beta}\psi(\frac{\cdot }{n})
  \partial^{\gamma} g\Big\|_{L^{p_{|\alpha|}}}
&\le   \frac{1}{n^{|\beta|}} \cdot \| \partial^{\beta}\psi(\frac{\cdot }{n})\|_{L^{r}}
   \|\partial^{\gamma} g 1_{\{|x|\ge n\}}\|_{L^{p_{|\gamma|}}}
   \nonumber \\
&= \frac{1}{n^{|\beta|-N/r}}
\cdot \| \partial^{\beta}\psi \|_{L^{r}}
\|\partial^{\gamma} g 1_{\{|x|\ge n\}}
  \|_{L^{p_{|\gamma|}}}.\label{eq:e4}
\end{align}
Since $1/p_i \le 1/p_{i-1} + 1/N$ for all $1\le i\le k$,
we have
\[
  \frac{1}{r} =
  \frac{1}{p_{|\alpha|}} - \frac{1}{p_{|\gamma|}}
\le \frac{|\alpha|-|\gamma|}{N}
=\frac{|\beta|}{N}.
\]
Consequently, $ | \beta |- N/r\ge 0$.
It follows from (\ref{eq:e4})  and \eqref{eq:e2} that
\begin{align}
\Big\|\frac{1}{n^{|\beta|}} \partial^{\beta}\psi(\frac{\cdot }{n})
  \partial^{\gamma} g\Big\|_{L^{p_{|\alpha|}}}
 \rightarrow  0,  \qquad \mathrm{as}\,\, n\rightarrow \infty. \label{eq:e8}
\end{align}
Hence the sequence $\{g_n:\, n\ge 1\}$ converges
to $g $ in $W_k^{\vec p}$ and
(\ref{eq:e3}) is  valid.
\end{proof}

To prove the density of $C_c^{\infty}$ for full indices,
we   first   establish
an embedding theorem.
For classical Sobolev spaces, the embedding inequality
\[
  \|\varphi\|_{L^{N/(N-1)}} \le  C \|\nabla \varphi\|_{L^1}
\]
is first proved for functions in
$C_c^{\infty}$, and then for general functions in $W^{1,1}$ by the
density of $C_c^{\infty}$ in $W^{1,1}$.
We refer to \cite[eq. (2.38)]{Demengel2012} for details.

In the nonuniform case, we have no such  density result
at the moment. So we need to prove it directly
for functions in $C^{\infty}\cap L^{p_0}(\bbR^N) $
for some $p_0>0$.

\begin{Lemma}  \label{Lm:L2}
Suppose that $N\ge 2$.

\begin{enumerate}
\item For any $\varphi\in C^{\infty}\cap L^{p_0}(\bbR^N)$ with $0<p_0<\infty$,
\begin{equation}\label{eq:e6}
  \|\varphi\|_{L^{N/(N-1)}} \le
    \frac{1}{N} \sum_{|\alpha|=1} \|\partial^{\alpha} \varphi\|_{L^1}.
\end{equation}
Moreover, the above inequality is also true for
any $f\in W_1^{\vec p}$ with $\vec p=(p_0,1)$ and $1\le p_0<\infty$.

\item
Suppose that $\vec p =(p_0,p_1)$, where $1\le p_0<\infty$ and $1\le p_1<N$.
For any $f\in W_1^{\vec p}(\bbR^N)$,
we  have
\begin{equation}\label{eq:Np1}
\|f\|_{L^{Np_1/(N-p_1)}}  \le C_{N,p_1} \|\nabla f\|_{L^{p_1}}.
\end{equation}
Hence for all $q$ between $p_0$ and $Np_1/(N-p_1)$, we have
\begin{equation}\label{eq:e13}
\|f\|_{L^q} \le C \|f\|_{W_1^{\vec p}}.
\end{equation}
\end{enumerate}
\end{Lemma}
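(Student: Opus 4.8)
The plan is to prove the two parts in sequence, using part (1) to bootstrap part (2), and then deduce the interpolation estimate \eqref{eq:e13} from \eqref{eq:Np1} together with the hypothesis $f \in L^{p_0}$.

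For part (1), the Gagliardo--Nirenberg--Sobolev argument gives, for any $\varphi \in C_c^\infty$, the pointwise bound $|\varphi(x)| \le \prod_{i=1}^N \big(\int_{\bbR} |\partial_i \varphi(x_1,\dots,t,\dots,x_N)|\, \rmd t\big)^{1/N}$, and then integrating in $x$ and applying the generalized H\"older inequality yields $\|\varphi\|_{L^{N/(N-1)}} \le \prod_{i=1}^N \|\partial_i\varphi\|_{L^1}^{1/N} \le \frac1N \sum_{|\alpha|=1}\|\partial^\alpha\varphi\|_{L^1}$ by AM--GM. The point requiring care is that here $\varphi$ need not be compactly supported: it is only in $C^\infty \cap L^{p_0}$ with $\nabla\varphi \in L^1$. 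First I would establish the pointwise inequality for such $\varphi$ by writing $\varphi(x) = -\int_t^\infty \partial_i\varphi(\dots)\,\rmd s$ for one coordinate direction; this is legitimate once we know $\varphi(x_1,\dots,x_i,\dots) \to 0$ as $x_i\to+\infty$ along almost every line, which follows because $\partial_i\varphi$ is integrable on almost every line (so the line-integral has a limit) and $\varphi$ is in $L^{p_0}$ on almost every line (so the limit must be $0$). Combining the one-variable representations in all $N$ directions gives the pointwise bound, and the rest of the argument proceeds as in the compactly supported case. For the extension to $f \in W_1^{\vec p}$ with $\vec p=(p_0,1)$, I would mollify: by Lemma~\ref{Lm:density} and the mollification formula \eqref{eq:e1}, take $g_\lambda = f * \varphi_\lambda \in C^\infty$ with $g_\lambda \to f$ in $L^{p_0}$ and $\nabla g_\lambda \to \nabla f$ in $L^1$; each $g_\lambda \in C^\infty \cap L^{p_0}$, so \eqref{eq:e6} applies to $g_\lambda$; passing a subsequence converging a.e.\ and using Fatou on the left together with convergence of the right-hand side finishes it.

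For part (2), I would again work by mollification, reducing to the case $f \in C^\infty$ with $\nabla f \in L^{p_1}$ and $f \in L^{p_0}$. The standard trick is to apply part (1) to $\varphi = |f|^{\gamma-1} f$ (suitably interpreted, approximating $|f|$ if needed), where $\gamma = (N-1)p_1/(N-p_1) > 1$ is chosen so that the exponent on the left matches $Np_1/(N-p_1)$. One checks $\varphi \in C^\infty$ away from the zero set and, crucially, that $\varphi \in L^{p_0'}$ for some finite $p_0'$: since $|\varphi| = |f|^\gamma$ and $f \in L^{p_0}$, this needs $\gamma p_0' = p_0$ for some choice, which just requires $p_0' := p_0/\gamma$, and if this is $<1$ one instead truncates $f$ at height $n$ (so $\varphi_n := |f_n|^{\gamma-1}f_n$ is bounded and in $L^{p_0}$) and lets $n\to\infty$ at the end. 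Then $\nabla\varphi = \gamma |f|^{\gamma-1}\nabla f$, so \eqref{eq:e6} gives $\||f|^\gamma\|_{L^{N/(N-1)}} \le \gamma\, \||f|^{\gamma-1}|\nabla f|\|_{L^1} \le \gamma\, \|f\|_{L^{(\gamma-1)p_1'}}^{\gamma-1} \|\nabla f\|_{L^{p_1}}$ by H\"older with exponents $p_1', p_1$. The algebra is arranged precisely so that $(\gamma-1)p_1' = \gamma N/(N-1) = Np_1/(N-p_1)$, i.e.\ both sides involve the same norm $\|f\|_{Np_1/(N-p_1)}$; after verifying that norm is finite (from the truncation argument and monotone convergence) one divides through to obtain \eqref{eq:Np1} with an explicit constant $C_{N,p_1} = \gamma$.

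Finally, \eqref{eq:e13} follows by interpolation: for $q$ between $p_0$ and $p^* := Np_1/(N-p_1)$, write $q = \theta p_0 + (1-\theta)p^*$ in the sense of reciprocals, i.e.\ $1/q = \theta/p_0 + (1-\theta)/p^*$ with $\theta\in[0,1]$, and apply the interpolation inequality $\|f\|_{L^q} \le \|f\|_{L^{p_0}}^\theta \|f\|_{L^{p^*}}^{1-\theta} \le \|f\|_{L^{p_0}}^\theta (C_{N,p_1}\|\nabla f\|_{L^{p_1}})^{1-\theta} \le C \|f\|_{W_1^{\vec p}}$, using $a^\theta b^{1-\theta} \le a+b \le \|f\|_{W_1^{\vec p}}$. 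The main obstacle throughout is the absence of compact support: every step where the classical proof integrates by parts or drops a boundary term must instead be justified from the $L^{p_0}$ membership of $f$ (via decay along almost every line, or via truncation), and keeping the test function $|f|^{\gamma-1}f$ inside a Lebesgue space of finite exponent is exactly where that hypothesis gets used.
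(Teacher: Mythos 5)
Your part (1) follows the paper's route: use $\varphi \in L^{p_0}$ to extract, along almost every coordinate line, a sequence on which $\varphi$ tends to $0$, obtain the one--dimensional fundamental-theorem representation, run the Gagliardo/H\"older/AM--GM argument, and extend to $W_1^{(p_0,1)}$ by mollification and Fatou. The interpolation step for \eqref{eq:e13} is also the same as the paper's. Those parts are fine.

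The gap is in part (2). You apply \eqref{eq:e6} to $|f|^{\gamma-1}f$ with the single fixed exponent $\gamma = (N-1)p_1/(N-p_1)$, so that both sides of the resulting inequality involve $\|f\|_{L^{p^*}}$ with $p^* = Np_1/(N-p_1)$, and then divide. Dividing requires $\|f\|_{L^{p^*}}<\infty$ a priori, and your truncation at height $n$ does not secure this in the regime $p_0 > p^*$, i.e.\ $1/p_1 > 1/p_0 + 1/N$: truncating the large values of $f$ controls nothing at infinity, which is exactly what membership in $L^{p^*}$ with $p^*<p_0$ demands. For instance $f(x)=(1+|x|)^{-\alpha}$ with $N/p_0 < \alpha < N/p^*$ is already bounded, lies in $L^{p_0}$ but not in $L^{p^*}$, and is unchanged by your truncation, so your inequality reads $\infty\le\infty$ for every $n$ and ``letting $n\to\infty$'' yields nothing. (Your concern about $p_0/\gamma<1$ is a non-issue: \eqref{eq:e6} is stated for all $0<p_0<\infty$.) This is precisely the difficulty the paper flags in its Introduction. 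The paper's proof splits into two cases: for $1/p_1 \le 1/p_0 + 1/N$ (so $p_0 \le p^*$) it invokes the density of $C_c^\infty$ from Lemma~\ref{Lm:density:a}, close in spirit to your mollification; for $1/p_1 > 1/p_0 + 1/N$ it runs an iterated bootstrap with a \emph{moving} exponent $r_n = q_{n-1}/p_1'+1$, $q_n = r_nN/(N-1)$, showing that $q_n$ decreases strictly to $p^*$ and that $f\in L^{q_n}$ at each stage, and then passes to the limit with an $\varepsilon$-rescaling trick and Fatou. Your single-shot choice of $\gamma$ has no analogue of that descent, so the finiteness of $\|f\|_{L^{p^*}}$ is never established in the case $p_0>p^*$. (A horizontal truncation $(|f|-1/n)_+\,\mathrm{sign}\,f$, whose support has finite measure, would in fact serve in that regime, but that is not the truncation you propose.)
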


\begin{proof}
(i)\,\, The proof is similar to that for
 \cite[eq. (2.38)]{Demengel2012}.
 Here we  only  provide a sketch with emphasis on the difference.

Fix some $\varphi\in C^{\infty}\cap L^{p_0} $.
If the right-hand side of (\ref{eq:e6})  equals infinity,
then (\ref{eq:e6}) is certainly valid. So we only need to consider the
case $\|\partial^{\alpha} \varphi\|_{L^1}<\infty$ for all
multi-indices $\alpha$ with $|\alpha|=1$.

For each index $i $, denote
$\tilde x_i = (x_1,\ldots, x_{i-1},
x_{i+1},\ldots, x_N)$. Since $\varphi\in L^{p_0}$, we have
\[
  \int_{\bbR^{N-1}}
  \bigg(  \int_{\bbR} |\varphi(x)|^{p_0}  \rmd x_i \bigg)\,   \rmd \tilde x_i
   = \|\varphi\|_{L^{p_0}}^{p_0}
  <\infty.
\]
Hence there are measurable sets $A_i\subset\bbR^{N-1}$
of measure zero such that
\[
  \int_{\bbR} |\varphi(x)|^{p_0}\rmd x_i<\infty,
  \qquad \forall \,\, \tilde x_i\in \bbR^{N-1}\setminus A_i.
\]
Consequently,  for each $\tilde x_i$
there is a sequence $\{a_n:\, n\ge 1\}\subset \bbR$
 (depending on $\tilde x_i$)
such that $\lim_{n\rightarrow\infty} a_n = -\infty$ and
$\lim_{n\rightarrow\infty} \varphi(x+(a_n-x_i)e_i) = 0$.
Since  for each $ x_i$ we have
\[
  \varphi(x) - \varphi(x+(a_n-x_i)e_i) = \int_{a_n}^{x_i}
    \partial_{x_i}\varphi(x+ (t-x_i)e_i) \rmd t,
\]
   by the Fundamental Theorem of Calculus,  letting $n\rightarrow\infty$, we obtain
\[
  \varphi(x)  = \int_{-\infty}^{x_i} \partial_{x_i}
  \varphi(x+ (t-x_i)e_i) \rmd t,\qquad \forall \, x_i\in\bbR, \,\,
 \forall \,  \tilde x_i\in\bbR^{N-1}\setminus A_i.
\]
 Here and henceforth $e_i=(0,\ldots, 1,\ldots, 0)$ with $1$ only on the $i$th  entry and $0$ elsewhere.
It follows that
\begin{equation}\label{eq:e5}
  |\varphi(x)| \le  \int_{\bbR} \left|\partial_{x_i}
  \varphi(x+ (t-x_i)e_i) \right|\rmd t,\qquad \forall \, x\in \bbR^N\setminus B_i,
\end{equation}
where
\[
  B_i =  \{x\in\bbR^N:\, x_i\in\bbR, \tilde x_i\in A_i\}
\]
is of measure zero in  $\bbR^N$.
Observe that the integral in (\ref{eq:e5})
is independent of $x_i$.

For $1\le i\le N$, define the function $F_i$ on $\bbR^{N-1}$ by
\[
  F_i(\tilde x_i) = \int_{\bbR} \left|\partial_{x_i}
  \varphi(x+ (t-x_i)e_i) \right|\rmd t.
\]
We have
\[
  |\varphi(x)|^{N/(N-1)}
  \le \prod_{i=1}^N F_i(\tilde x_i)^{1/(N-1)},
  \qquad
  \forall x\in\bbR^N \setminus \bigcup_{i=1}^N B_i.
\]
Now following the same arguments as that in \cite[Page 76]{Demengel2012}
we obtain
\[
  \|\varphi\|_{L^{N/(N-1)}} \le
    \frac{1}{N} \sum_{|\alpha|=1} \|\partial^{\alpha} \varphi\|_{L^1}.
\]

For the general case, i.e., $f\in W_1^{\vec p}$ with $p_1=1$,
since $C^{\infty}\cap W_1^{\vec p}$ is dense in
$W_1^{\vec p}$, there is a sequence
$\{\varphi_n:\, n\ge 1\}\subset C^{\infty}\cap W_1^{\vec p}$
which is convergent to $f$ in $W_1^{\vec p}$,
as well as on $\bbR^N$ almost everywhere.

For each $n$, we have
\[
  \|\varphi_n\|_{L^{N/(N-1)}} \le
    \frac{1}{N} \sum_{|\alpha|=1} \|\partial^{\alpha} \varphi_n\|_{L^1}.
\]
Letting $n\rightarrow\infty$, we apply
Fatou's lemma to finally deduce
\[
  \|f\|_{L^{N/(N-1)}} \le
    \frac{1}{N} \sum_{|\alpha|=1} \|\partial^{\alpha} f\|_{L^1}.
\]

\noindent
(ii)\,\,
We see from the embedding theorem
 for  Sobolev's spaces (see \cite[eq. (2.46)]{Demengel2012}) that for any $f\in C_c^{\infty}(\bbR^N)$,
\[
  \|f\|_{L^{Np_1/(N-p_1)}} \le C \|\nabla f\|_{L^{p_1}}.
\]
If $1/p_1 \le 1/p_0 + 1/N$, then $C_c^{\infty}$ is dense in $W_1^{\vec p}$,
thanks to Lemma~\ref{Lm:density:a}.
Similar arguments as the previous case we get that the above inequality is true
for all $f\in W_1^{\vec p}$.

It remains to consider the case $1/p_1 > 1/p_0 + 1/N$.

Take   $f\in W_1^{\vec p}$.
Set $\vec{\tilde p} = (\tilde p_0, p_1)$, where  $1/\tilde p_0 = 1/p_1 - 1/N$.
It is easy to see that $1<\tilde p_0 < p_0$.
We conclude that $f\in W_1^{\vec{\tilde p}}$,
for which we only need to
show that $f\in L^{\tilde p_0}$.

To this end, set $r=p_0/p'_1+1$ and $h = |f|^{r-1} f$.
We have $\nabla h = r |f|^{r-1} \nabla f$. Since
$f\in L^{p_0}$, $\nabla f\in L^{p_1}$ and
$(r-1)/p_0 + 1/p_1=1$, we see from H\"older's
inequality that
\[
  \|\nabla h\|_1 \le r \| |f|^{r-1}\|_{L^{p_0/(r-1)}} \|\nabla f\|_{L^{p_1}}
   = r \| f\|_{L^{p_0}}^{r-1} \|\nabla f\|_{L^{p_1}}
  .
\]
It follows from (\ref{eq:e6}) that
\begin{align*}
 \|h\|_{L^{N/(N-1)}}
 &\le C \|\nabla h\|_{L^1} \le Cr \| f\|_{L^{p_0}}^{r-1} \|\nabla f\|_{L^{p_1}}.
\end{align*}
Denote $q = rN/(N-1)$.
Observe that
$\|h\|_{L^{N/(N-1)}} = \|f\|_{L^q}^r$. We have
\begin{equation}\label{eq:e12}
 \|f\|_{L^q}
 \le  \| f\|_{L^{p_0}}^{1-1/r}  \Big(Cr  \|\nabla f\|_{L^{p_1}}\Big)^{1/r}.
\end{equation}
Since $1/p_1 > 1/p_0 + 1/N$, we have
\[
  \frac{q}{p_0} = \frac{N}{N-1} \Big (\frac{1}{p_0}+1 - \frac{1}{p_1}\Big)< 1.
\]
On the other hand, it follows from $1/p_0 < 1/p_1 - 1/N$ that
\[
  \frac{1}{p'_1} < \frac{p_0}{p'_1} \Big( \frac{1}{p_1} -  \frac{1}{N} \Big) .
\]
Hence,
\[
1- \frac{1}{N}< \frac{1}{p_1} - \frac{1}{N} + \frac{p_0}{p'_1} \Big( \frac{1}{p_1} -  \frac{1}{N} \Big) .
\]
Therefore  we have
\[
\frac{1}{q}=  \frac{N-1}{N}\cdot \frac{1}{1+p_0/p'_1} <  \frac{1}{p_1} -  \frac{1}{N} .
\]

Set $q_0 = p_0$. For $n\ge 1$, define $r_n$ and $q_n$ recursively by
\[
  r_n = \frac{q_{n-1}}{p'_1} + 1 \mbox{\quad and \quad }
  q_n = \frac{r_nN}{N-1}.
\]
We see from above arguments that both $\{q_n:\, n\ge 1\}$
and $\{r_n:\, n\ge 1\}$ are
 decreasing, $1/p_1 > 1/q_n+1/N$ and
\begin{equation}\label{eq:e7}
 \|f\|_{L^{q_n}}
 \le  \| f\|_{L^{q_{n-1}}}^{1-1/r_n}  \Big(Cr_n  \|\nabla f\|_{L^{p_1}}\Big)^{1/r_n}.
\end{equation}
Since $q_n$  is   bounded from below,  the limits   $\tilde r := \lim_{n\rightarrow\infty} r_n$
and  $\tilde q := \lim_{n\rightarrow\infty} q_n$ exist.
Moreover,  $\tilde q =\tilde  rN/(N-1) > \tilde r\ge 1$.

Take some constant $\varepsilon$ small enough such that $\varepsilon\cdot  Cr_1   \|\nabla f\|_{L^{p_1}}<1$.
Since $r_n\le r_1$,
we have $\varepsilon\cdot  Cr_n   \|\nabla f\|_{L^{p_1}}<1$ for all $n\ge 1$.
Set $\tilde f = \varepsilon f$ and substitute $\tilde f$ for $f$ in (\ref{eq:e7}),  we get
\[
 \|\tilde f\|_{L^{q_n}}
 \le  \| \tilde f\|_{L^{q_{n-1}}}^{1-1/r_n},\qquad n\ge 1.
\]
Applying the above inequality recursively, we obtain
\begin{align*}
 \|\tilde f\|_{L^{q_n}}
 &\le  \| \tilde f\|_{L^{q_{n-2}}}^{(1-1/r_n)(1-1/r_{n-1})} \\
 &\le \cdots \\
 &\le \| \tilde f\|_{L^{q_0}}^{\prod_{i=1}^n (1-1/r_i)} \\
 &\le \max\{\| \tilde f\|_{L^{q_0}},1\}.
\end{align*}
Hence,
\[
  \int_{\bbR^N} |\tilde f(x)|^{q_n} \rmd x \le \max\{\| \tilde f\|_{L^{q_0}},1\}^{q_n}.
\]
Letting $n\rightarrow\infty$, we see from Fatou's lemma
that $\tilde f\in L^{\tilde q}$.  Consequently
$f\in L^{\tilde q}$.

Observe that
\[
  q_n = \frac{r_nN}{N-1} = \frac{N}{N-1}\Big(\frac{q_{n-1}}{{p'_1}} + 1\Big).
\]
Letting $n\rightarrow\infty$, we get
\[
    \tilde q =  \frac{N}{N-1}\Big(\frac{\tilde q}{p'_1} + 1\Big).
\]
Hence $1/p_1 = 1/\tilde q + 1/N$. Therefore, $\tilde q=\tilde p_0$ and
$f\in L^{\tilde p_0}$.

Set $r = \tilde p_0/p'_1+1$. Then (\ref{eq:e12}) turns out to be
\[
 \|f\|_{L^{\tilde p_0}}
 \le  \| f\|_{L^{\tilde p_0}}^{1-1/r}  \Big(Cr  \|\nabla f\|_{L^{p_1}}\Big)^{1/r}.
\]
Finally, we have
\[
 \|f\|_{L^{\tilde p_0}}
 \le  Cr  \|\nabla f\|_{L^{p_1}}
\]
  and this establishes  (\ref{eq:Np1}) since $1/p_1 -1/N= 1/\tilde q   =1/\tilde p_0 $.

Moreover, (\ref{eq:Np1}) implies that $\|f\|_{L^{Np_1/(N-p_1)}} \le C \|f\|_{W_1^{\vec p}}$.
Since $\|f\|_{L^{p_0}}\le  C' \|f\|_{W_1^{\vec p}}$,
by interpolation, we get that
for any $q$ between $p_0$ and $Np_1/(N-p_1)$,
\[
  \|f\|_{L^q} \le C \|f\|_{W_1^{\vec p}} .
\]
This completes the proof.
\end{proof}

The following is an immediate consequence.
\begin{Corollary}\label{Co:C1}
Suppose that $\vec p=(p_0, p_1)$, $\vec q = (q_0, q_1)$,
$1\le p_0,p_1,q_0,q_1<\infty$
and $p_1<N$.
If $q_1=p_1$ and $q_0$ lies between $p_0$ and $Np_1/(N-p_1)$,
then
\[
  W_1^{\vec p}(\bbR^N) \hookrightarrow W_1^{\vec q}(\bbR^N).
\]
\end{Corollary}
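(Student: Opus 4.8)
The plan is to read off Corollary~\ref{Co:C1} directly from part (ii) of Lemma~\ref{Lm:L2}, since the two hypotheses match up almost verbatim. Let $f\in W_1^{\vec p}(\bbR^N)$ with $\vec p=(p_0,p_1)$, $p_1<N$. I need to show $f\in W_1^{\vec q}(\bbR^N)$ with $\vec q=(q_0,q_1)$, $q_1=p_1$, and $q_0$ between $p_0$ and $Np_1/(N-p_1)$, together with a bound on $\|f\|_{W_1^{\vec q}}$ in terms of $\|f\|_{W_1^{\vec p}}$.

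First I would observe that the first-order part of the norm is unchanged: since $q_1=p_1$, the seminorm contribution $\sum_{|\alpha|=1}\|D^\alpha f\|_{L^{q_1}}=\sum_{|\alpha|=1}\|D^\alpha f\|_{L^{p_1}}$ is already controlled by $\|f\|_{W_1^{\vec p}}$. So the only thing to check is that the zeroth-order term $\|f\|_{L^{q_0}}$ is finite and bounded by $C\|f\|_{W_1^{\vec p}}$. But this is exactly the content of \eqref{eq:e13}: for every $q$ between $p_0$ and $Np_1/(N-p_1)$ one has $\|f\|_{L^q}\le C\|f\|_{W_1^{\vec p}}$, and $q_0$ is such a $q$ by hypothesis. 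Combining the two estimates gives $\|f\|_{W_1^{\vec q}}=\|f\|_{L^{q_0}}+\sum_{|\alpha|=1}\|D^\alpha f\|_{L^{p_1}}\le C\|f\|_{W_1^{\vec p}}$, which is precisely the continuous embedding $W_1^{\vec p}(\bbR^N)\hookrightarrow W_1^{\vec q}(\bbR^N)$.

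There is essentially no obstacle here; the corollary is a bookkeeping restatement of Lemma~\ref{Lm:L2}(ii). The one small point worth spelling out is that \eqref{eq:e13} as stated requires $1\le p_0<\infty$, which holds, and that ``between $p_0$ and $Np_1/(N-p_1)$'' is meant inclusively, so the endpoint cases $q_0=p_0$ (trivial, since then $\vec q=\vec p$) and $q_0=Np_1/(N-p_1)$ (covered by \eqref{eq:Np1}) are both fine; the intermediate cases follow by the interpolation remark already made in the proof of the lemma. Hence the proof is a single short paragraph invoking Lemma~\ref{Lm:L2}.

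\begin{proof}
Let $f\in W_1^{\vec p}(\bbR^N)$. Since $q_1=p_1<N$, part (ii) of Lemma~\ref{Lm:L2} applies. By \eqref{eq:e13}, for every $q$ between $p_0$ and $Np_1/(N-p_1)$ we have $\|f\|_{L^q}\le C\|f\|_{W_1^{\vec p}}$; in particular $\|f\|_{L^{q_0}}\le C\|f\|_{W_1^{\vec p}}<\infty$. Moreover, since $q_1=p_1$,
\[
  \sum_{|\alpha|=1}\|D^{\alpha}f\|_{L^{q_1}}=\sum_{|\alpha|=1}\|D^{\alpha}f\|_{L^{p_1}}\le \|f\|_{W_1^{\vec p}}.
\]
Adding the two estimates yields
\[
  \|f\|_{W_1^{\vec q}}=\|f\|_{L^{q_0}}+\sum_{|\alpha|=1}\|D^{\alpha}f\|_{L^{q_1}}\le C\|f\|_{W_1^{\vec p}},
\]
so $f\in W_1^{\vec q}(\bbR^N)$ and the inclusion $W_1^{\vec p}(\bbR^N)\hookrightarrow W_1^{\vec q}(\bbR^N)$ is continuous.
\end{proof}
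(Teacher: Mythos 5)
Your proof is correct and takes exactly the route the paper intends: the paper itself states Corollary~\ref{Co:C1} as ``an immediate consequence'' of Lemma~\ref{Lm:L2}, with no separate argument given, and your one-paragraph unwinding via \eqref{eq:e13} together with the trivial identity of the gradient terms (since $q_1=p_1$) is precisely that. Nothing to add.
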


\begin{Remark}\upshape
The range for $q$ in the inequality (\ref{eq:e13}) is
the best possible, which can be shown as that for classical Sobolev spaces.
\end{Remark}

For example, take some $f\in W_1^{\vec p}$. Suppose that
(\ref{eq:e13}) is true for some $q$. Replacing $f(\cdot/\lambda)$ for $f$
in (\ref{eq:e13}), we obtain
\[
  \big\| f\big(\frac{\cdot}{\lambda}\big) \big\|_{L^q}
  \lesssim  \big\| f\big(\frac{\cdot}{\lambda}\big) \big\|_{L^{p_0}}
     + \frac{1}{\lambda} \big\| \nabla f\big(\frac{\cdot}{\lambda}\big) \big\|_{L^{p_1}}.
\]
Thus we have
\[
   \lambda^{N/q}\| f\|_{L^q}
  \lesssim \lambda^{N/p_0}  \| f  \|_{L^{p_0}}
     + \frac{1}{\lambda^{1-N/p_1}}\| \nabla f\|_{L^{p_1}}
\]
and therefore
\[
  1 \lesssim \lambda^{N(1/p_0-1/q)} + \lambda^{N(1/p_1-1/N-1/q)}.
\]

First, we assume that $p_0 \le Np_1/(N-p_1)$.
If $q> Np_1/(N-p_1)$, then both $1/p_1-1/N-1/q$
and
$1/p_0-1/q$ are positive. Letting $\lambda\rightarrow 0$, we get a contradiction.

If $q<p_0$, then both $1/p_1-1/N-1/q$
and
$1/p_0-1/q$ are negative. Letting $\lambda\rightarrow \infty $, we also get a contradiction.

Next, we assume that $p_0 > Np_1/(N-p_1)$.
With similar arguments we get a contradiction.
Hence
(\ref{eq:e13}) is true if and only if $q$ is between
$p_0$ and $Np_1/(N-p_1)$.

With the help of Lemma~\ref{Lm:L2}, we
prove the density of
compactly supported
infinitely many differentiable
functions in nonuniform Sobolev spaces
with general  indices.

\begin{Theorem}\label{thm:density}
For any $k\ge 1$ and $\vec p\in [1,\infty)^{k+1}$,
 the space $C_c^{\infty}(\bbR^N)$ is dense in $W_k^{\vec p}(\bbR^N)$.
\end{Theorem}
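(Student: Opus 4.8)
\textbf{Proof plan for Theorem~\ref{thm:density}.}
The strategy is to bootstrap from the special-index case (Lemma~\ref{Lm:density:a}) to general $\vec p$ by factoring the approximation through an intermediate space whose indices satisfy the gap condition $1/p_i \le 1/p_{i-1} + 1/N$. First I would fix $f \in W_k^{\vec p}$ and, using Lemma~\ref{Lm:density}, replace it by $g \in C^\infty \cap W_k^{\vec p}$ with the extra integrability $\partial^\gamma g \in L^r$ for all $r \ge p_{|\gamma|}$ coming from \eqref{eq:e2}. So the real task is to show that $g$ is approximated in $W_k^{\vec p}$ by $C_c^\infty$ functions, and again the natural candidates are the truncations $g_n(x) = \psi(x/n) g(x)$ with $\psi \in C_c^\infty$, $\psi \equiv 1$ near the origin. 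Expanding $\partial^\alpha g_n$ by the Leibniz rule, the main term $\psi(\cdot/n)\partial^\alpha g$ converges to $\partial^\alpha g$ in $L^{p_{|\alpha|}}$ by dominated convergence, and the cross terms $n^{-|\beta|}\partial^\beta\psi(\cdot/n)\,\partial^\gamma g$ with $|\beta|\ge 1$, $|\beta|+|\gamma|=|\alpha|$ must be shown to vanish.

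The cross terms are exactly where the gap condition was used in Lemma~\ref{Lm:density:a}, and here it need not hold, so this is the main obstacle. The remedy is Lemma~\ref{Lm:L2}(ii) and Corollary~\ref{Co:C1}: when $1/p_{|\alpha|} > 1/p_{|\gamma|} + 1/N$ we cannot directly bound the $L^{p_{|\alpha|}}$ norm of the cross term by $\|\partial^\gamma g\,1_{\{|x|\ge n\}}\|$ times a negative power of $n$. Instead I would argue that $\partial^\gamma g$ itself enjoys better integrability than $L^{p_{|\gamma|}}$ forces: applying Lemma~\ref{Lm:L2}(ii) (or its iterated/higher-order analogue via Corollary~\ref{Co:C1}) to the function $\partial^\gamma g$, whose first derivatives $\partial^{\gamma'}g$ with $|\gamma'|=|\gamma|+1$ lie in $L^{p_{|\gamma|+1}}$, gives $\partial^\gamma g \in L^q$ for all $q$ in a suitable interval; combined with the a~priori $L^r$-gain \eqref{eq:e2}, one upgrades the integrability of $\partial^\gamma g$ step by step along $|\gamma| = |\alpha|, |\alpha|-1,\dots$ until a Sobolev exponent is reached that closes the gap with $p_{|\alpha|}$. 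Concretely, for each pair $(\beta,\gamma)$ one picks $r > 1$ with $1/p_{|\alpha|} = 1/s + 1/r$ where $s$ is an exponent in which $\partial^\gamma g$ is known to lie and $s \le p_{|\alpha|} \cdot (\text{something ensuring } N/r \le |\beta|)$; then Hölder gives, as in \eqref{eq:e4},
\[
\Big\| \tfrac{1}{n^{|\beta|}} \partial^\beta\psi\big(\tfrac{\cdot}{n}\big)\,\partial^\gamma g \Big\|_{L^{p_{|\alpha|}}}
\le \frac{1}{n^{|\beta| - N/r}} \|\partial^\beta\psi\|_{L^r} \, \|\partial^\gamma g\,1_{\{|x|\ge n\}}\|_{L^s} \longrightarrow 0 .
\]

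Thus the plan is: (1) reduce to smooth $g$ with the integrability \eqref{eq:e2}; (2) for each multi-index $\gamma$ appearing, establish that $\partial^\gamma g$ lies in $L^s$ for a range of $s$ wide enough — proved by downward induction on $k - |\gamma|$ using Lemma~\ref{Lm:L2}(ii)/Corollary~\ref{Co:C1} on the derivatives of $g$, noting the base case $|\gamma|=k$ is immediate from \eqref{eq:e2}; (3) with this integrability in hand, every Leibniz cross term in $\partial^\alpha g_n$ is killed by an $n^{-(|\beta|-N/r)}$ factor with $|\beta|-N/r \ge 0$, while the main term converges by dominated convergence; (4) conclude $g_n \to g$ in $W_k^{\vec p}$, hence $C_c^\infty$ is dense. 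The one delicate bookkeeping point is to make sure that at each stage of the downward induction the interval of admissible exponents for $\partial^\gamma g$ genuinely contains a value that, paired with $p_{|\alpha|}$ via Hölder, forces $N/r \le |\beta|$; this follows because passing from $|\gamma|+1$ to $|\gamma|$ improves the Sobolev exponent by exactly $1/N$ (when $p_{|\gamma|+1}<N$) or gives arbitrarily large exponents (when $p_{|\gamma|+1}\ge N$), and iterating over $|\beta|$ steps recovers a gain of $|\beta|/N$, precisely what is needed.
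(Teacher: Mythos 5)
Your plan is essentially the paper's proof. The paper realizes the downward induction you describe by explicitly constructing a modified index vector $\vec q=(q_0,\dots,q_k)$ with $q_k=p_k$ and, for $n<k$, $q_n=p_n$ when $1/p_n\ge 1/q_{n+1}-1/N$ and $q_n=Nq_{n+1}/(N-q_{n+1})$ otherwise; then $\partial^\gamma g\in L^{q_{|\gamma|}}$ (by Lemma~\ref{Lm:L2}(ii)) and $\vec q$ satisfies by construction the telescoping estimate $1/p_{|\alpha|}\le 1/q_{|\gamma|}+|\beta|/N$, which is exactly what you need to make the H\"older exponent $r$ satisfy $N/r\le|\beta|$. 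One small point worth noting in your write-up: you invoke ``arbitrarily large exponents when $p_{|\gamma|+1}\ge N$,'' but that would be a forward reference to Theorem~\ref{thm:embedding}(ii)--(iii), which is proved only after the density theorem. It is also unnecessary: if $q_{|\gamma|+1}\ge N$ then $1/q_{|\gamma|+1}-1/N\le 0<1/p_{|\gamma|}$, so the gap condition holds automatically at that step and no integrability upgrade is required. Restricting to Lemma~\ref{Lm:L2}(ii) (which only needs $p_1<N$) therefore suffices and avoids any circularity.
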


\begin{proof}
By Lemma~\ref{Lm:density:a},
it suffices to consider the case $1/p_{i_0} > 1/p_{i_0-1} + 1/N$ for
some $i_0$ with $1\le i_0\le k$.

In this case, we have $N\ge 2$.
Let $\vec q =(q_0,\ldots,q_N)$,
where $q_n$ is defined recursively by
\begin{align*}
q_N &= p_N, \\
q_n &= \begin{cases}
         p_n, & \mbox{ if } 1/p_n\ge 1/q_{n+1}- 1/N, \\
         Nq_{n+1}/(N-q_{n+1}),     &\mbox{ otherwise},
\end{cases}
\qquad n=N-1,\ldots, 0.
\end{align*}
By Lemma~\ref{Lm:L2} (ii), we have $f\in W_k^{\vec q}$.
Since  $1/p_n \le 1/q_n \le 1/q_{n-1} + 1/N$ for all $1\le n\le N$,
we have
\begin{equation}\label{eq:e9}
\frac{1}{p_n} \le \frac{1}{q_n} \le \frac{1}{q_m} + \frac{n-m}{N},
  \quad n>m.
\end{equation}

It suffices to show that (\ref{eq:e8}) is also true in this case.

In the sequel we adopt the  notation   introduced in the proof of Lemma~\ref{Lm:density:a}.
Recall that $\partial^{\gamma}f\in L^{q_{|\gamma|}}$.
As in the proof of Lemma~\ref{Lm:density:a}, we may assume that
$\partial^{\gamma}g\in L^{r}$ for any $r>q_{|\gamma|}$.
If $p_{|\alpha|} \ge q_{|\gamma|}$,
then we get (\ref{eq:e8}) as in the proof of Lemma~\ref{Lm:density:a}.
For the case $p_{|\alpha|} < q_{|\gamma|}$,
we see from (\ref{eq:e9}) that
\[
    \frac{1}{p_{|\alpha|}} \le \frac{1}{q_{|\gamma|}} + \frac{|\beta|}{N}.
\]
Hence there is some $r\ge N/|\beta|$ such that
\[
  \frac{1}{p_{|\alpha|}} = \frac{1}{q_{|\gamma|}} + \frac{1}{r}.
\]
Applying H\"older's inequality, we get
\[
  \Big\|\frac{1}{n^{|\beta|}} \partial^{\beta}\psi(\frac{\cdot }{n})
  \partial^{\gamma} g\Big\|_{L^{p_{|\alpha|}}}
\le
 \frac{1}{n^{|\beta|-N/r}}
\cdot \| \partial^{\beta}\psi \|_{L^{r}}
\|\partial^{\gamma} g 1_{\{|x|\ge n\}}
  \|_{L^{q_{|\gamma|}}}
  \rightarrow 0.
\]
This completes the proof.
\end{proof}

Recall that for an integer $n\ge 0$,
the space $C_b^n(\bbR^N)$
consists of all functions $f$ such that
$f$ is $n$ times continuously differentiable and
for any multi-index $\alpha$ with $|\alpha|\le n$,
$D^{\alpha}f\in L^{\infty}$.

For an integer $n\ge 0$ and a positive number $\nu\in (0,1)$,
the space $C^{n,\nu}(\bbR^N)$ consists of all functions $f$ in $C_b^n$ such that
for any multi-index $\alpha$ with $|\alpha|=n$,
\[
  | D^{\alpha} f (x) - D^{\alpha}f(y)|
  \le C_{n,\nu} |x-y|^{\nu},\quad \forall x,y\in\bbR^N.
\]

Below is an embedding theorem for higher-order nonuniform Sobolev spaces.

\begin{Theorem}\label{thm:embedding}
Let $k$ be a positive integer and $\vec p = (p_0, \ldots, p_k)
\in [1,\infty)^{k+1}$.

\begin{enumerate}
\item If $kp_k<N$,  then for any $q$ between $p_0$ and $Np_k/(N-kp_k)$, we have
    \[
      W_k^{\vec p} (\mathbb R^N)  \hookrightarrow L^q (\mathbb R^N) .
    \]

\item If $kp_k=N$, then for any $q$ with $p_0\le q<\infty$, we have
    \[
      W_k^{\vec p} (\mathbb R^N)  \hookrightarrow L^q(\mathbb R^N) .
    \]

\item If $kp_k>N$,  then $
      W_k^{\vec p} (\mathbb R^N)  \hookrightarrow L^{\infty} (\mathbb R^N)$.
      More precisely,
if  $kp_k>N$ and $N/p_k\not\in\bbZ$,
then there is some integer $k_0$
such that $(k_0-1)p_k <N<k_0 p_k$
and
    \[
      W_k^{\vec p} (\mathbb R^N)  \hookrightarrow
      C_b^{k-k_0,k_0-N/p_k} (\mathbb R^N) .
    \]
 If  $N/p_k \in\bbZ$ and $k\ge k_0 :=N/p_k+1$,
then for any $0<\lambda<1$,
 \[
      W_k^{\vec p} (\mathbb R^N)  \hookrightarrow
      C_b^{k-k_0,\lambda} (\mathbb R^N) .
    \]

\end{enumerate}
\end{Theorem}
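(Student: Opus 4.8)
The plan is to iterate the first-order nonuniform embedding of Lemma~\ref{Lm:L2}(ii) along a ``Sobolev chain'' of exponents attached to the top index $p_k$, and to settle the borderline endpoints (where a chain exponent reaches $N$ and Lemma~\ref{Lm:L2} no longer applies) by importing classical Gagliardo--Nirenberg and Morrey inequalities for $C_c^\infty$ functions through the density Theorem~\ref{thm:density}; only the finiteness of the intermediate exponents $p_1,\dots,p_{k-1}$ is ever used, which is why they never obstruct anything. For part (i), fix $f\in W_k^{\vec p}$, set $\pi_k=p_k$ and $1/\pi_j=1/p_k-(k-j)/N$ for $0\le j<k$, and prove by downward induction on $j=|\alpha|$ that $D^\alpha f\in L^{\pi_{|\alpha|}}$ with $\|D^\alpha f\|_{L^{\pi_{|\alpha|}}}\le C\|f\|_{W_k^{\vec p}}$: at the step $j+1\to j$, each order-$j$ derivative lies in $L^{p_j}$ with gradient in $L^{\pi_{j+1}}$ by the inductive hypothesis, hence in $W_1^{(p_j,\pi_{j+1})}$, and since $kp_k<N$ forces $\pi_{j+1}<N$ at every step, Lemma~\ref{Lm:L2}(ii) delivers $D^\alpha f\in L^{\pi_j}$ with the required bound. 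At $j=0$ this gives $f\in L^{Np_k/(N-kp_k)}$, and interpolating with $\|f\|_{L^{p_0}}\le\|f\|_{W_k^{\vec p}}$ via $\|f\|_{L^q}\le\|f\|_{L^{p_0}}^{\theta}\|f\|_{L^{Np_k/(N-kp_k)}}^{1-\theta}$ yields part (i) for all $q$ between $p_0$ and $Np_k/(N-kp_k)$.

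For part (ii), $kp_k=N$, the chain stalls one step short, leaving the order-$1$ derivatives only in $L^N$, which Lemma~\ref{Lm:L2} does not reach. Here I would argue directly on $C_c^\infty$: the classical Gagliardo--Nirenberg inequality gives $\|f\|_{L^q}\le C\|D^kf\|_{L^{p_k}}^{\theta}\|f\|_{L^{p_0}}^{1-\theta}\le C\|f\|_{W_k^{\vec p}}$ for every $q\in[p_0,\infty)$, with $\theta\in[0,1)$ solving $1/q=\theta(1/p_k-k/N)+(1-\theta)/p_0$ (the exceptional regime where $k-N/p_k$ is a nonnegative integer is exactly why $q=\infty$ must be excluded). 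For general $f$ I take $C_c^\infty$ approximants $f_n\to f$ in $W_k^{\vec p}$ from Theorem~\ref{thm:density}; the inequality applied to $f_n-f_m$ makes $\{f_n\}$ Cauchy in $L^q$, the $L^q$-limit agrees a.e.\ with $f$ (compare a.e.\ subsequential limits against the $L^{p_0}$-limit), and passing to the limit gives $\|f\|_{L^q}\le C\|f\|_{W_k^{\vec p}}$. The same device applied to the $L^\infty$ endpoint of Gagliardo--Nirenberg, $\|g\|_{L^\infty}\le C\|D^mg\|_{L^{q_m}}^{\theta}\|g\|_{L^{q_0}}^{1-\theta}$ valid when $mq_m>N$, and to the Morrey estimate $|g(x)-g(y)|\le C|x-y|^{1-N/q}\|\nabla g\|_{L^q}$ for $q>N$, yields the two auxiliary embeddings $W_m^{(q_0,\dots,q_m)}\hookrightarrow L^\infty$ (when $mq_m>N$) and $W_1^{(a,q)}\hookrightarrow C_b^{0,1-N/q}$ (when $q>N$) that I use in part (iii).

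For part (iii), $kp_k>N$, set $k_0=\lceil N/p_k\rceil$ when $N/p_k\notin\bbZ$ and $k_0=N/p_k+1$ otherwise. For $|\alpha|\le k-k_0$ one has $D^\alpha f\in W_{k-|\alpha|}^{(p_{|\alpha|},\dots,p_k)}$ with $(k-|\alpha|)p_k\ge k_0p_k>N$, so the auxiliary $L^\infty$ embedding gives $D^\alpha f\in L^\infty$; this is both the embedding $W_k^{\vec p}\hookrightarrow L^\infty$ and the boundedness of all derivatives up to order $k-k_0$. When $N/p_k\notin\bbZ$, for $|\alpha|=k-k_0$ the homogeneous Sobolev--Morrey inequality $[D^\alpha g]_{C^{0,\,k_0-N/p_k}}\le C\|D^kg\|_{L^{p_k}}$ for $g\in C_c^\infty$ (note $k_0-N/p_k\in(0,1)$), combined with Theorem~\ref{thm:density}, the auxiliary $L^\infty$ bound (which upgrades $W_k^{\vec p}$-convergence of the approximants to uniform convergence of their $\alpha$-derivatives), and the fact that a uniform limit of functions with uniformly bounded $C^{0,\gamma}$-seminorm is again $C^{0,\gamma}$, shows $D^\alpha f$ has a $C^{0,\,k_0-N/p_k}$ representative, so $f\in C_b^{k-k_0,\,k_0-N/p_k}$. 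When $N/p_k\in\bbZ$ the chain runs only $N/p_k-1$ steps; the remaining top-order derivatives land in $L^N$, one borderline step (via $W_1^{(\cdot,N)}\hookrightarrow L^q$ for all $q<\infty$, again proved by Gagliardo--Nirenberg plus density) puts the derivatives of order $k-k_0$ into $W_1^{(\cdot,q)}$ for every finite $q>N$, and the auxiliary Morrey embedding then gives $D^\alpha f\in C_b^{0,1-N/q}$ for $|\alpha|=k-k_0$ and every $q>N$, i.e.\ $f\in C_b^{k-k_0,\lambda}$ for every $\lambda<1$.

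I expect the work to lie in two places. The routine-but-necessary point is that every application of Lemma~\ref{Lm:L2}(ii) along the chain must be legitimized by observing the relevant derivative already lies in \emph{some} finite Lebesgue space, which is automatic from $\vec p\in[1,\infty)^{k+1}$. The real obstacle is the endpoint regimes ($kp_k=N$, or a chain exponent equal to $N$, or $N/p_k\in\bbZ$), which fall outside the paper's own toolkit; routing them through classical $C_c^\infty$ inequalities and Theorem~\ref{thm:density} is the natural fix and is legitimate precisely because --- unlike in the proof of Lemma~\ref{Lm:L2} --- the density of $C_c^\infty$ is now available for all admissible $\vec p$. Within part (iii) the heaviest bookkeeping, though nothing deep, is reducing the vector statement $W_k^{\vec p}\hookrightarrow C_b^{k-k_0,\gamma}$ to scalar $L^\infty$ and H\"older facts about individual top-order derivatives, and tracking the exact H\"older exponent across the split $N/p_k\notin\bbZ$ versus $N/p_k\in\bbZ$.
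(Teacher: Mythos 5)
Your proof is correct, and parts (ii) and (iii) take a genuinely different route from the paper's. For (i) you and the paper proceed identically: iterate Lemma~\ref{Lm:L2}(ii) along the chain $1/\pi_j=1/p_k-(k-j)/N$ and interpolate with the trivial $L^{p_0}$ bound. For (ii), the paper bootstraps directly inside the nonuniform framework: setting $h=|f|^{r-1}f$ with $r=p_0(N-1)/N+1$, using H\"older to see $\nabla h\in L^1$ and Lemma~\ref{Lm:L2}(i) to raise the integrability exponent by $N/(N-1)$ per iteration, whereas you import the classical Gagliardo--Nirenberg inequality for $C_c^{\infty}$ and transfer it via Theorem~\ref{thm:density}. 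For (iii), the paper proves $W_k^{\vec p}\hookrightarrow C_b^0$ by induction on $k$ with a case split on the sign of $(k-1)p_k-N$, delegating the $k=1$ base case to Demengel's argument, and establishes H\"older continuity for $k=1$ via mollification and the interpolation inequality \eqref{eq:s:e4} before reducing higher $k$ through the exponent chain; you instead invoke the $L^{\infty}$-endpoint Gagliardo--Nirenberg and Morrey inequalities on $C_c^{\infty}$ and push them through by density, observing that convergence in $W_k^{\vec p}$ upgrades to uniform convergence of the relevant derivatives so H\"older seminorms pass to the limit. Both routes are legitimate; yours is shorter once one accepts the classical compactly supported inequalities as given, and it is natural precisely because Theorem~\ref{thm:density} is already in hand (the density was \emph{not} yet available when the paper proved the first-order Lemma~\ref{Lm:L2}, which is why that lemma needed a hands-on approach, but there is no obstruction at the stage of Theorem~\ref{thm:embedding}). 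One point you gesture at in a parenthetical but should state explicitly to make the argument airtight: in the $N/p_k\in\bbZ$ regime, $m-N/q_m$ can be a positive integer for the auxiliary embedding $W_m^{(q_0,\dots,q_m)}\hookrightarrow L^{\infty}$, which is the classical Gagliardo--Nirenberg exceptional case \emph{at} $\theta=1$; it is the finiteness of $q_0$ that forces the balance condition $0=\theta(1/q_m-m/N)+(1-\theta)/q_0$ to give $\theta<1$, so the exceptional case is never triggered, unlike in part (ii) where $kp_k=N$ forces $\theta=1$ and indeed excludes $q=\infty$.
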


\begin{proof}
(i)\,\, First, we consider the case $kp_k<N$.
We see from
Lemma~\ref{Lm:L2} that the conclusion is true for $k=1$.

For the case $k\ge 2$, applying (\ref{eq:Np1}) recursively, we get
\begin{align}
 \|f\|_{L^{Np_k/(N-kp_k)}}
 &\le C_1 \sum_{|\alpha|=1} \|\partial^{\alpha} f\|_{L^{Np_k/(N-(k-1)p_k)}}
    \nonumber \\
 &\le C_2 \sum_{|\alpha|=2} \|\partial^{\alpha} f\|_{L^{Np_k/(N-(k-2)p_k)}}
   \nonumber \\
 &\le \ldots \nonumber \\
 &\le C_k \sum_{|\alpha|=k } \|\partial^{\alpha} f\|_{L^{p_k}}. \label{eq:e14}
\end{align}
By interpolation, we get that for any  $q$ between $p_0$ and $Np_k/(N-kp_k)$,
$  W_k^{\vec p}
  \hookrightarrow L^q$.

\medskip\noindent
(ii)\,\, Next, we consider the case $kp_k=N$.

First, we assume that $k=1$ and $N>1$.
Take some $f\in W_1^{\vec p}$, where $\vec p = (p_0, N)$.
Set $r = p_0(N-1)/N+1$ and $h = |f|^{r-1} f$.
We have $\nabla h = r |f|^{r-1} \nabla f$.
Since $(r-1)/p_0 + 1/N = 1$, we see from
H\"older's inequality that
\[
 \|\nabla h\|_{L^1}
 \le  r\|f\|_{L^{p_0}}^{r-1} \|\nabla f\|_{L^N}<\infty.
\]
Hence $\nabla h\in L^1$.
By Lemma~\ref{Lm:L2}, $h\in L^{N/(N-1)}$.
Therefore, $f\in L^{rN/(N-1)} = L^{p_0+N/(N-1)}$.

Replacing $p_0$ by $p_0+N/(N-1)$  in the preceding argument,
we obtain  that $f$ lies in $ L^{p_0+2N/(N-1)}$.
Repeating this procedure yields that
$f\in L^{p_0+nN/(N-1)}$ for any $n\ge 1$.
Hence
$f\in L^q$ for any $q$ satisfying
$p_0\le q<\infty$.

For the case $k=N=1$, we see from (\ref{eq:e5}) that for smooth functions $f$,
\[
  \|f\|_{L^{\infty}} \lesssim \|f'\|_{L^1}.
\]
Applying the density of $C_c(\bbR)$ in $W_1^{\vec p}(\bbR)$, we get
that the above inequality is valid for all $f\in W_1^{\vec p}$.
Hence
$f\in L^q$ for any $q$ satisfying
$p_0\le q<\infty$.

Next we assume that $kp_k=N$ for some $k\ge 2$.
Take some $f\in W_k^{\vec p}$, where $\vec p = (p_0,\ldots, p_k)$.
For any multi-index $\alpha$ with $|\alpha|=1$, we have
$\partial^{\alpha} f \in W_{k-1}^{(p_1,\ldots,p_k)}$.
Note that  $(k-1)p_k < N$. We see from (i) that
$\partial^{\alpha} f\in L^{Np_k/(N-(k-1)p_k)} = L^N$.
Hence $f\in W_1^{(p_0, N)}$. Now we see from   arguments in
  the case $k=1$ that $f\in L^q$ for any $q$ satisfying
$p_0\le q<\infty$.

\medskip\noindent
(iii)\,\, Finally, we consider the case $kp_k>N$.

First, we show that $W_k^{\vec p}\subset C_b^0$.
We prove it by induction on $k$.
For $k=1$, the arguments in \cite[pages 80-82]{Demengel2012}
work well for the nonuniform case with minor changes.
Specifically, applying the $L^{p_1}$ norm for derivatives and
the $L^{p_0}$ norm for the function itself,
the same arguments yield that $W_1^{\vec p}\hookrightarrow C_b^0$. That is, functions in $W_1^{\vec p}$ are continuous
and bounded.

Now we assume that the conclusion is true
for the cases $1$, $\ldots$, $k-1$.
Consider the case $k$.
Fix some $f\in W_k^{\vec p}$.
There are two cases:

\medskip\noindent
(a)\,\,  $(k-1)p_k\ge N$.

If $(k-1)p_k > N$, then we see from the inductive assumption
that for any multi-index $\alpha$ with $|\alpha|=1$,
\[
 \|\partial^{\alpha} f \|_{L^{\infty}} \lesssim
 \|\partial^{\alpha} f\|_{W_{k-1}^{(p_1,\ldots,p_k)}}
 \le \| f\|_{W_k^{\vec p}}.
\]
If $(k-1)p_k=N$, then we see from (ii) that
for any multi-index $\alpha$ with $|\alpha|=1$,
$\partial^{\alpha} f\in L^q$ for all $q$ with $p_0\le q<\infty$.

Hence for $(k-1)p_k\ge N$, there is some $q>\max\{N,p_0\}$ such that
\[
  \|\nabla f\|_{L^q} \lesssim \| f\|_{W_k^{\vec p}}.
\]
Consequently, $f\in W_1^{(p_0,q)}$.
Applying the inductive assumption  for the case $k=1$, we get that $f$ is continuous and
\[
  \| f\|_{L^{\infty}} \lesssim \|f\|_{W_1^{(p_0,q)}} \lesssim \| f\|_{W_k^{\vec p}}.
\]

\medskip\noindent
(b)\,\,  $(k-1)p_k< N$.

In this case, we see from (i) that $f\in W_1^{(p_0,q_1)}$
with $q_1 = Np_k/(N-(k-1)p_k)$. Since $kp_k>N$, we have $q_1>N$.
Applying the inductive assumption  for the case $k=1$ again, we get that $f$ is continuous and
\[
  \| f\|_{L^{\infty}} \lesssim \|f\|_{W_1^{(p_0,q_1)}}  \lesssim \| f\|_{W_k^{\vec p}}.
\]
By induction, the embedding
$  W_k^{\vec p}
\hookrightarrow  C_b^0$
 is valid for all $k\ge 1$.

Next we prove the H\"older continuity.

First, we assume that $N/p_k$ is not an integer.
When $k=1$, we have $p_1>N$.
Take some $f\in W_1^{\vec p}$.
Let $\varphi$ and $g_{\lambda}$  be defined as in Lemma~\ref{Lm:density}.
We have
\[
  g_{\lambda}(x+y) - g_{\lambda}(x)
     = \int_0^1 \nabla g_{\lambda}(x+ty)\cdot y \rmd t
     = \int_0^1\int_{\bbR^N}
     \nabla f(x+ty-z)\cdot y  \varphi(z) \rmd z\rmd t
     ,
     \quad \forall x,y\in\bbR^N.
\]
It follows from Minkowski's inequality that
\[
 \| g_{\lambda}(\cdot+y) - g_{\lambda}\|_{L^{p_1}}
 \le |y|\cdot  \|\nabla f\|_{L^{p_1}}.
\]
Since $f$ is continuous and $\lim_{\lambda\rightarrow 0}g_{\lambda}(x) = f(x)$ for all $x\in\bbR^N$,
letting $\lambda\rightarrow 0$ in the above inequality,
we see from Fatou's lemma that
\[
 \| f(\cdot+y) - f\|_{L^{p_1}}
 \le |y|\cdot  \|\nabla f\|_{L^{p_1}}.
\]
On the other hand, it is easy to see that
\[
 \| \nabla(f(\cdot+y) - f)\|_{L^{p_1}}
 \le 2 \|\nabla f\|_{L^{p_1}}.
\]
Applying the fact for  classical Sobolev spaces
that if $u, |\nabla u| \in L^{p_1}$ and $p_1>N$,
then
\begin{equation}\label{eq:s:e4}
  \|u\|_{L^{\infty}} \lesssim \|u\|_{L^{p_1}}^{1-N/p_1}
      \| \nabla u  \|_{L^{p_1}}^{N/p_1},
\end{equation}
we get
\[
  \| f(\cdot+y) - f\|_{L^{\infty}}
 \le |y|^{1-N/p_1}\cdot  \|\nabla f\|_{L^{p_1}}.
\]
Hence $f\in C_b^{0,1-N/p_1}$.

When $k\ge 2$,  there is some positive integer
$k_0\le k$ such that
$(k_0-1)p_k < N<k_0 p_k$.
It follows that for any multi-index $\alpha$
with $|\alpha|=k-k_0$, we have
$D^{\alpha} f \in W_{k_0}^{(p_{k-k_0},\ldots,p_k)}$.
Now we see from (i) that
for any multi-index $\beta$
with $|\beta|=1$,
\[
  D^{\alpha+\beta} f \in L^{Np_k/(N-(k_0-1)p_k)}.
\]
Since $Np_k/(N-(k_0-1)p_k)>N$,
applying the conclusion for $k=1$, we get
$D^{\alpha} f\in
C_b^{0,1-N/(Np_k/(N-(k_0-1)p_k))} =
C_b^{0,k_0-N/p_k}$.

Moreover,
by the embedding inequality we have proved,
 $D^{\beta}f\in L^q$
whenever $1\le |\beta|\le k-k_0$ and $q$ is large enough.
In particular, $D^{\beta}f\in L^q$ for some $q>N$.
Applying the conclusion for $k=1$ again, we get
$D^{\alpha} f\in C_b^0$ when $|\alpha|\le k-k_0-1$.
 Hence $f\in C_b^{k-k_0,k_0-N/p_k}$.

It remains to consider the case $N/p_k\in \bbZ$.
When $(k_0-1)=N<k_0p_k$,
for any multi-index $\alpha$
with $|\alpha|=k-k_0$, we have
$D^{\alpha} f \in W_{k_0}^{(p_{k-k_0},\ldots,p_k)}$.
Now we see from (ii) that for $q$ large enough
and $|\beta|=1$,
\[
  D^{\alpha+\beta} f \in L^q.
\]
The arguments for the case $k=1$ show that
for $q$ large enough, $D^{\alpha}f\in C_b^{0,1-N/q}$.
Consequently, $D^{\alpha}f\in C_b^{0,\lambda}$
whenever $|\alpha|=k-k_0$ and $0<\lambda<1$.

On the other hand,
the same arguments as those
for the case $N/p_k\not\in\bbZ$ show that
$D^{\alpha} f\in C_b^0$ when $|\alpha|\le k-k_0-1$.
Hence $f\in C_b^{k-k_0,\lambda}$ for any $0<\lambda<1$.
\end{proof}

Theorem~\ref{thm:embedding} has some interesting consequences.
In fact, the following corollary can be
 proved with similar arguments as in   the proof
 of Theorem~\ref{thm:embedding} (iii),
 for which we leave the details to interested readers.

\begin{Corollary}\label{Co:embedding}
Let $k$ be a positive integer and $\vec p = (p_0, \ldots, p_k)
\in [1,\infty)^{k+1}$.
If $p_k>N$, then
\[
  W_k^{\vec p}  \hookrightarrow   C_b^{k-1, 1-N/p_k}.
\]
\end{Corollary}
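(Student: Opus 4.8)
The plan is to mirror the argument in the proof of Theorem~\ref{thm:embedding}(iii), reducing everything to the first-order case. Note first that $p_k>N$ forces $kp_k>N$ and $N/p_k\in(0,1)$, so $N/p_k$ is never an integer and the hypotheses of that theorem are met; in fact the corollary is essentially the special case $k_0=1$ of Theorem~\ref{thm:embedding}(iii), and the same proof scheme applies almost verbatim. Fix $f\in W_k^{\vec p}$.

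The first step is to show that every derivative of order at most $k-1$ is continuous and bounded, with norm controlled by $\|f\|_{W_k^{\vec p}}$. For any multi-index $\beta$ with $|\beta|\le k-1$ one has $D^{\beta}f\in W_{k-|\beta|}^{(p_{|\beta|},\ldots,p_k)}$, and since $(k-|\beta|)p_k\ge p_k>N$, the $C_b^0$ part of Theorem~\ref{thm:embedding}(iii) applied to $D^{\beta}f$ gives $D^{\beta}f\in C_b^0$ together with $\|D^{\beta}f\|_{L^{\infty}}\lesssim\|D^{\beta}f\|_{W_{k-|\beta|}^{(p_{|\beta|},\ldots,p_k)}}\le\|f\|_{W_k^{\vec p}}$.

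The second step is the Hölder estimate for the top-order derivatives. Fix $\alpha$ with $|\alpha|=k-1$; then $D^{\alpha}f\in L^{p_{k-1}}$ is continuous by Step~1, and $\nabla D^{\alpha}f$, whose components are order-$k$ derivatives $D^{\gamma}f$, lies in $L^{p_k}$ with $p_k>N$. Mollifying $D^{\alpha}f$ as in Lemma~\ref{Lm:density}, writing $g_{\lambda}$ for the mollification and using $g_{\lambda}(x+y)-g_{\lambda}(x)=\int_0^1\nabla g_{\lambda}(x+ty)\cdot y\,\rmd t$, Minkowski's inequality together with Young's inequality yields $\|g_{\lambda}(\cdot+y)-g_{\lambda}\|_{L^{p_k}}\le|y|\,\|\nabla D^{\alpha}f\|_{L^{p_k}}$; letting $\lambda\to0$ and invoking Fatou's lemma (legitimate since $g_{\lambda}\to D^{\alpha}f$ pointwise by continuity) gives $\|D^{\alpha}f(\cdot+y)-D^{\alpha}f\|_{L^{p_k}}\le|y|\,\|\nabla D^{\alpha}f\|_{L^{p_k}}$. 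Since moreover $\|\nabla(D^{\alpha}f(\cdot+y)-D^{\alpha}f)\|_{L^{p_k}}\le 2\|\nabla D^{\alpha}f\|_{L^{p_k}}$, applying the classical interpolation inequality (\ref{eq:s:e4}) to $u=D^{\alpha}f(\cdot+y)-D^{\alpha}f$ produces $\|D^{\alpha}f(\cdot+y)-D^{\alpha}f\|_{L^{\infty}}\lesssim|y|^{1-N/p_k}\,\|\nabla D^{\alpha}f\|_{L^{p_k}}\lesssim|y|^{1-N/p_k}\,\|f\|_{W_k^{\vec p}}$. Thus $D^{\alpha}f\in C_b^{0,1-N/p_k}$ for every $|\alpha|=k-1$; combined with Step~1 this gives $f\in C_b^{k-1,1-N/p_k}$ with $\|f\|_{C_b^{k-1,1-N/p_k}}\lesssim\|f\|_{W_k^{\vec p}}$.

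The one delicate point is exactly the one already present in the proof of Theorem~\ref{thm:embedding}(iii): the interpolation inequality (\ref{eq:s:e4}) requires $u$ and $\nabla u$ to lie in the \emph{same} space $L^{p_k}$, whereas $D^{\alpha}f$ itself is only known a priori to belong to $L^{p_{k-1}}$. This is circumvented by observing that the difference of translates $D^{\alpha}f(\cdot+y)-D^{\alpha}f$ does lie in $L^{p_k}$, which is precisely what the mollification-plus-Fatou estimate delivers; the continuity of $D^{\alpha}f$ established in Step~1 is what makes the passage to the limit valid. Everything else is routine multi-index bookkeeping with Young's and Hölder's inequalities, exactly as in the cited proof, and so the details may indeed be safely left to the reader.
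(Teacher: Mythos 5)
Your proof is correct and mirrors the paper's argument for Theorem~\ref{thm:embedding}(iii) specialized to $k_0=1$: first obtain boundedness and continuity of derivatives up to order $k-1$ from the $L^\infty$ part of that theorem, then apply the mollification--Fatou--interpolation scheme, via (\ref{eq:s:e4}) applied to $D^\alpha f(\cdot+y)-D^\alpha f$ rather than to $D^\alpha f$ itself, at order $k-1$. Since $p_k>N$ forces $N/p_k\in(0,1)$, hence $N/p_k\notin\bbZ$ and $k_0=1$, the corollary is literally that case of Theorem~\ref{thm:embedding}(iii), and your reconstruction fills in exactly the details the paper leaves to the reader.
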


On the other hand, the following result shows that the Sobolev space
$W_k^{\vec q}$ with $1\le q_k<N/k$ and $1/q_i = 1/q_k - (k-i)/N$
is the largest one among all
Sobolev spaces
$W_k^{\vec p}$ with $p_k=q_k$.

\begin{Corollary}\label{Co:C2}
Suppose that $\vec p = (p_0,\ldots, p_k)\in [1,\infty)^{k+1}$
with $1\le p_k<N/k$. Let
$\vec q = (q_0,\ldots,q_k)$
be such that $q_k=p_k$ and $q_i = Np_k/(N-(k-i)p_k)$ for $0\le i\le k-1$.
Then we have
\[
  W_k^{\vec p}  \hookrightarrow W_k^{\vec q} .
\]

Moreover, set $\vec q^{(i)} = (q_0,\ldots, q_i)$,
$1\le i\le k$.
We have
\[
  W_k^{\vec q^{(k)}}
  \hookrightarrow W_{k-1}^{\vec q^{(k-1)}}
  \hookrightarrow \cdots
  \hookrightarrow W_1^{\vec q^{(1)}}
  \hookrightarrow L^{q_0}.
\]
\end{Corollary}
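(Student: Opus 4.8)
The plan is to prove the two assertions separately: the inclusion $W_k^{\vec p}\hookrightarrow W_k^{\vec q}$ by applying the Sobolev embedding of Theorem~\ref{thm:embedding}(i) to each derivative $\partial^\alpha f$, and the chain of embeddings by an essentially trivial nesting of defining conditions.

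First I would record the arithmetic behind the statement: the hypothesis $q_i=Np_k/(N-(k-i)p_k)$ for $0\le i\le k-1$ is equivalent to $1/q_i=1/p_k-(k-i)/N$, equivalently $1/q_i=1/q_{i-1}-1/N$, so $q_i$ is exactly the top exponent appearing in the Sobolev embedding for a space of order $k-i$ whose highest Lebesgue index is $p_k$. Now fix $f\in W_k^{\vec p}$. For a multi-index $\alpha$ with $|\alpha|=k$ there is nothing to prove, since $q_k=p_k$ gives $\partial^\alpha f\in L^{p_k}=L^{q_k}$ with $\|\partial^\alpha f\|_{L^{q_k}}\le\|f\|_{W_k^{\vec p}}$. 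For $0\le l\le k-1$ and $|\alpha|=l$, the function $\partial^\alpha f$ lies in $W_{k-l}^{(p_l,p_{l+1},\dots,p_k)}$ with $\|\partial^\alpha f\|_{W_{k-l}^{(p_l,\dots,p_k)}}\le\|f\|_{W_k^{\vec p}}$; since $p_k<N/k$ we have $(k-l)p_k\le kp_k<N$, so Theorem~\ref{thm:embedding}(i) applies to this space, and taking the endpoint exponent $q=Np_k/(N-(k-l)p_k)=q_l$ of its admissible range yields $\partial^\alpha f\in L^{q_l}$ with $\|\partial^\alpha f\|_{L^{q_l}}\le C\|f\|_{W_k^{\vec p}}$. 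Summing over all $|\alpha|\le k$ gives $\|f\|_{W_k^{\vec q}}\le C\|f\|_{W_k^{\vec p}}$, which is the first embedding.

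For the ``moreover'' part I would observe that $\vec q^{(i-1)}$ is $\vec q^{(i)}$ with its last entry deleted, and that the defining conditions of $W_{i-1}^{\vec q^{(i-1)}}$ form a subset of those of $W_i^{\vec q^{(i)}}$: if $f\in W_i^{\vec q^{(i)}}$ then $\partial^\alpha f\in L^{q_{|\alpha|}}$ for all $|\alpha|\le i$, hence in particular for all $|\alpha|\le i-1$, so $f\in W_{i-1}^{\vec q^{(i-1)}}$ with $\|f\|_{W_{i-1}^{\vec q^{(i-1)}}}\le\|f\|_{W_i^{\vec q^{(i)}}}$. This gives $W_i^{\vec q^{(i)}}\hookrightarrow W_{i-1}^{\vec q^{(i-1)}}$ for $2\le i\le k$, and the same reasoning at $i=1$ gives $W_1^{\vec q^{(1)}}\hookrightarrow L^{q_0}$ because membership in $L^{q_0}$ is part of the definition of $W_1^{(q_0,q_1)}$. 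Concatenating these inclusions produces the displayed sequence of embeddings.

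I do not expect a genuine obstacle. The only point needing a moment of care is that the target exponent $q_l$ must coincide exactly with the endpoint $Np_k/(N-(k-l)p_k)$ of the range permitted by Theorem~\ref{thm:embedding}(i) --- this is precisely the recursion $1/q_l=1/q_{l-1}-1/N$ built into the definition of $\vec q$ --- together with the verification that $(k-l)p_k<N$, so that it is case (i), and not case (ii) or (iii), of that theorem that governs each $\partial^\alpha f$. The rest is routine bookkeeping of norms, and in particular no density argument is needed.
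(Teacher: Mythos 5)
Your argument is correct and is essentially the same as the paper's: the paper cites the recursive chain of Sobolev inequalities (\ref{eq:e14}) to bound every $\sum_{|\alpha|=l}\|\partial^\alpha f\|_{L^{q_l}}$ by $C\sum_{|\alpha|=k}\|\partial^\alpha f\|_{L^{p_k}}$ at once, whereas you invoke Theorem~\ref{thm:embedding}(i) as a black box on each $\partial^\alpha f\in W_{k-l}^{(p_l,\dots,p_k)}$ with $|\alpha|=l$, which amounts to the same recursive application of (\ref{eq:Np1}); the treatment of the nested chain of embeddings is likewise identical to the paper's ``obvious'' remark.
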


\begin{proof}
We see from (\ref{eq:e14}) that for any $f\in W_k^{\vec p}$,
\[
  \|f\|_{W_k^{\vec q}} \le C  \sum_{|\alpha|=k } \|\partial^{\alpha} f\|_{L^{p_k}}
  \le C \|f\|_{W_k^{\vec p}}.
\]
Hence $  W_k^{\vec p}  \hookrightarrow W_k^{\vec q}$.
The second conclusion is obvious. This completes the proof.
\end{proof}

\section{Nonuniform Fractional Sobolev Spaces}

Recall that
nonuniform fractional Sobolev spaces
are introduced in Definition~\ref{def:D1}.
We point out that for any $0<s<1$ and $\vec p = (p_0,p_1)\in [1,\infty)^2$,
 $W_s^{\vec p}(\bbR^N)$ is a Banach space,
 a fact that can be proved with almost the same arguments as that  used in the proof of
\cite[Proposition 4.24]{Demengel2012}.

\subsection{Embedding Theorem for Nonuniform Fractional Sobolev Spaces}

The main result in this subsection is the following embedding theorem.

\begin{Theorem}\label{thm:embedding:fractional}
Suppose that $\vec p = (p_0, p_1)$ with $1\le p_0<\infty$, $1<p_1<\infty$
and  $0<s<1$.
When $ p_0\le p_1 $, we have
\begin{enumerate}
\item If
 $sp_1<N$, then $W_s^{\vec p}(\bbR^N)
   \hookrightarrow L^q(\bbR^N)$ for all $p_0\le q \le Np_1/(N-sp_1)$.

\item  If   $sp_1=N$, then $W_s^{\vec p}(\bbR^N)
   \hookrightarrow L^q(\bbR^N)$ for all $p_0\le q <\infty$.

\item   If  $sp_1>N$, then $W_s^{\vec p}(\bbR^N)
   \hookrightarrow C_b^{0,s-N/p_1}(\bbR^N)$.
\end{enumerate}

For the case $p_0>p_1$, if $sp_1<N$ and  $ p_0< Np_1/(N-sp_1)$, we also have
$W_s^{\vec p}(\bbR^N)
   \hookrightarrow L^q(\bbR^N)$ for all  $p_0 \le q\le Np_1/(N-sp_1)$.
\end{Theorem}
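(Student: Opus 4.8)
The plan is to follow the classical Sobolev embedding strategy for fractional spaces (as in Di Nezza--Palatucci--Valdinoci) but track the two distinct integrability exponents carefully, using the density Theorem~\ref{thm:density} together with the integer-order embedding Lemma~\ref{Lm:L2} to handle the part of the argument where the Gagliardo seminorm must be converted into information about higher integrability of $f$ itself. First I would treat the subcritical case $sp_1<N$ with $p_0\le p_1$. The heart of the matter is the Gagliardo-type inequality: for $f\in C_c^\infty$ one has
\[
  \|f\|_{L^{Np_1/(N-sp_1)}} \le C_{N,s,p_1}\,[f]_{W_s^{p_1}},
\]
which is proved by the standard annular decomposition: write $f(x)-f(x)$ telescoped through averages over balls $B(x,2^{-j})$, estimate each piece by the Gagliardo integrand, and sum a geometric series. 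I would first establish this for Schwartz (or $C_c^\infty$) functions, then pass to general $f\in W_s^{\vec p}$; here the nonuniform subtlety mirrors the one flagged in the introduction --- one cannot simply invoke density of $C_c^\infty$ in the Gagliardo norm and conclude, because the target exponent $q=Np_1/(N-sp_1)$ may exceed $p_0$, so I must argue that $f$ already lies in $L^q$. For this I would mollify and truncate $f$ (using Theorem~\ref{thm:density} and the explicit mollification estimate), apply the inequality to the approximants $f_n$, and invoke Fatou's lemma exactly as in the proof of Lemma~\ref{Lm:L2}(i),(ii) to transfer the bound to $f$. Once $\|f\|_{L^q}\le C\|f\|_{W_s^{\vec p}}$ for the endpoint $q=Np_1/(N-sp_1)$ is known, interpolation between $L^{p_0}$ and $L^{Np_1/(N-sp_1)}$ gives the full range $p_0\le q\le Np_1/(N-sp_1)$.

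For the critical case $sp_1=N$ I would run a bootstrap in the spirit of Theorem~\ref{thm:embedding}(ii): pick $\theta\in(0,s)$ so that $\theta p_1<N$; since $[f]_{W_\theta^{p_1}}$ is controlled by $\|f\|_{L^{p_1}}^{1-\theta/s}[f]_{W_s^{p_1}}^{\theta/s}$ (an elementary interpolation of Gagliardo seminorms, splitting the double integral at $|x-y|=1$), part (i) applied with exponent $\theta$ upgrades $f$ from $L^{p_0}$ to $L^{q_1}$ with $q_1=Np_1/(N-\theta p_1)>p_0$; iterating this (each step increasing the Lebesgue exponent by a fixed multiplicative factor while keeping the $W_s^{p_1}$ seminorm fixed) pushes $q$ to any finite value, giving $W_s^{\vec p}\hookrightarrow L^q$ for all $p_0\le q<\infty$. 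For the supercritical case $sp_1>N$ I would use the fractional Morrey inequality: for $C_c^\infty$ functions,
\[
  |f(x)-f(y)| \le C\,|x-y|^{s-N/p_1}\,[f]_{W_s^{p_1}},
\]
proved again by the annular decomposition but now estimating the pointwise oscillation rather than an $L^q$ norm; combined with the $L^{p_0}$ bound (which forces $f\in L^\infty$ after the oscillation estimate and a ball-average argument) this yields $W_s^{\vec p}\hookrightarrow C_b^{0,s-N/p_1}$, where density of $C_c^\infty$ and uniform convergence let us pass from smooth functions to general elements.

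Finally, for the range $p_0>p_1$ with $sp_1<N$ and $p_0<Np_1/(N-sp_1)$, the subcritical argument above still applies verbatim: the endpoint inequality $\|f\|_{L^{Np_1/(N-sp_1)}}\le C[f]_{W_s^{p_1}}$ does not see $p_0$ at all, and since $p_0$ lies strictly between $p_1$ (hence, a fortiori, below the critical exponent) and $Np_1/(N-sp_1)$, interpolation again delivers $W_s^{\vec p}\hookrightarrow L^q$ for $p_0\le q\le Np_1/(N-sp_1)$; the only thing one must check is that the Fatou/mollification transfer step does not require $p_0\le p_1$, which it does not. The main obstacle, as in Section~2, is precisely this transfer to non-compactly-supported $f$: one must verify a priori that $f\in L^q$ before the Gagliardo inequality can be applied to $f$ directly, and the cleanest route is to combine the truncation estimates underlying Theorem~\ref{thm:density} with Fatou's lemma, exactly paralleling the bootstrap used in the proof of Lemma~\ref{Lm:L2}(ii). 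A secondary technical point is proving the interpolation inequality for Gagliardo seminorms used in the critical case; this is elementary but must be stated carefully since both exponents in $\vec p$ enter.
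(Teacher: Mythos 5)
Your overall strategy---establish the scale-invariant Gagliardo/Morrey inequalities
$\|f\|_{L^{Np_1/(N-sp_1)}}\lesssim[f]_{W_s^{p_1}}$ and $[f]_{C^{0,s-N/p_1}}\lesssim[f]_{W_s^{p_1}}$ for compactly supported $f$ by annular decomposition, transfer to general $f\in W_s^{\vec p}$ by truncation and Fatou, then interpolate with $L^{p_0}$---is sound and does deliver the stated embeddings. It is, however, a genuinely different route from the one the paper takes, and it has one citation/dependency issue you should be aware of.

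The paper does not prove the embedding via density at all. For $p_0\le p_1$ it first establishes Lemma~\ref{Lm:L3a}, namely $W_s^{(p_0,p_1)}\hookrightarrow W_s^{p_1}$, and then simply quotes the \emph{classical} fractional Sobolev embedding applied to $W_s^{p_1}$. For $p_0>p_1$, where Lemma~\ref{Lm:L3a} fails (Example~\ref{Ex:ex1}), the paper instead lifts $f$ to $N+1$ variables through the auxiliary function $g(t,x)$ of Lemma~\ref{Lm:Ls1}, convolves with the fundamental solution $E$ of the Laplacian against a cutoff $\Phi$, and estimates the resulting eight convolution terms individually, keeping the $L^{p_0}$ and $L^{p_1}$ information separate (this is the adaptation of the argument in Demengel--Demengel that Lemma~\ref{Lm:Ls1} is designed to serve). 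Your proof replaces that entire machinery with the Di Nezza--Palatucci--Valdinoci compactly-supported Gagliardo inequality plus density; it is cleaner in spirit, and it makes transparent where the hypothesis $p_0< Np_1/(N-sp_1)$ enters (it is equivalent to $s/N>1/p_1-1/p_0$, which is exactly the condition under which the truncation converges in Gagliardo seminorm).

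The issue to flag: the density result you invoke is not Theorem~\ref{thm:density}, which is the \emph{integer-order} density theorem and whose truncation estimates do not transfer to the Gagliardo seminorm. The statement you actually need is Theorem~\ref{dense:frac}, which appears in the paper \emph{after} the embedding theorem; its proof is a nontrivial piece of work, with a genuinely different estimate (the splitting into $I_1$, $I_2$, $II$ and the H\"older bookkeeping with $r=p_0/p_1$) that is the fractional analogue of what Lemma~\ref{Lm:density:a} and Theorem~\ref{thm:density} do for integer orders. Moreover, Theorem~\ref{dense:frac}'s proof in the regime $p_0\le p_1$ itself calls Lemma~\ref{Lm:L3a}, so in that regime you have not really sidestepped the paper's key lemma---you have only moved its use from the embedding proof into the density proof. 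In the regime $p_0>p_1$, on the other hand, the density theorem's proof is self-contained, so there your route genuinely avoids the Laplacian decomposition; that is where your approach buys the most. To make your argument self-contained you would need to either prove the fractional truncation estimate in place (essentially reproducing Theorem~\ref{dense:frac}) or reorganize the paper so that density precedes embedding; neither is a mathematical obstruction, but it is a real organizational cost that the proposal currently glosses over by pointing to the wrong theorem.
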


To prove Theorem~\ref{thm:embedding:fractional}, we need some preliminary results.

We see from \cite[Lemma 4.33]{Demengel2012} that
for any $f\in L^p$,
\[
  \iint_{\bbR^N\times\bbR^N} \frac{|f(x)-f(y)|^p}{|x-y|^{N+ps}} \rmd x\,\rmd y<\infty
\]
is equivalent to
\[
\iint_{\bbR^N\times\bbR} \frac{|f(x)-f(x+ae_j)|^p}{|a|^{1+ps}} \rmd x\,\rmd a<\infty,
\qquad \forall\,\, 1\le  j\le N.
\]
Checking the arguments in the proof
of \cite[Lemma 4.33]{Demengel2012},
we find that
the hypothesis $f\in L^p$ is not used.
Moreover, with the same arguments
we get the following
result.

\begin{Proposition}\label{prop:p1}
Let $0<s<1$ and $1\le p<\infty$.\
Then
there exist positive constants $C_1$ and $C_2$
such that for any
 measurable function $f$ which is finite almost everywhere,
\[
 C_1 [f]_{W_s^p}^p
\le
  \sum_{j=1}^N\iint_{\bbR^N\times\bbR} \frac{|f(x)-f(x+ae_j)|^p}{|a|^{1+ps}} \rmd x\,\rmd a
  \le C_2 [f]_{W_s^p}^{p},\qquad \forall\,\, 1\le  j\le N.
\]
\end{Proposition}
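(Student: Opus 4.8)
The plan is to prove the two inequalities separately, working one coordinate direction at a time and passing between the isotropic integral
\[
  [f]_{W_s^p}^p=\int_{\bbR^N}\!\int_{\bbR^N}\frac{|f(x)-f(x+z)|^p}{|z|^{N+ps}}\,\rmd z\,\rmd x
\]
and the directional integrals $I_j:=\int_{\bbR^N}\!\int_{\bbR}\frac{|f(x)-f(x+ae_j)|^p}{|a|^{1+ps}}\,\rmd x\,\rmd a$. All identities and inequalities are to be read in $[0,+\infty]$, and all the manipulations below involve only the differences $f(x)-f(y)$, together with translation invariance of Lebesgue measure, Tonelli's theorem for nonnegative integrands, and the convexity bound $|\sum_{i=1}^m t_i|^p\le m^{p-1}\sum_{i=1}^m|t_i|^p$ (valid since $p\ge 1$); in particular the hypothesis $f\in L^p$ is never used, only measurability and almost-everywhere finiteness of $f$. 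Every constant that appears depends only on $N$, $p$ and $s$, which is what yields the asserted two-sided bound with explicit constants; in substance this is the argument of \cite[Lemma 4.33]{Demengel2012}, reorganised so as to dispense with $f\in L^p$ and to keep track of constants.

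For the lower bound $C_1[f]_{W_s^p}^p\le\sum_{j=1}^N I_j$ I would write a generic displacement as $z=\sum_{k=1}^N a_ke_k$ and telescope
\[
  f(x+z)-f(x)=\sum_{k=1}^N\bigl(f(x+w_k+a_ke_k)-f(x+w_k)\bigr),\qquad w_k:=\sum_{j<k}a_je_j ,
\]
so that $|f(x+z)-f(x)|^p\le N^{p-1}\sum_{k=1}^N|f(x+w_k+a_ke_k)-f(x+w_k)|^p$. Integrating against $|z|^{-(N+ps)}\,\rmd z\,\rmd x$, translating $x\mapsto x-w_k$ in the $k$-th summand to make its numerator independent of the variables $a_j$ with $j\ne k$, and then carrying out the transverse integral over $\tilde a:=(a_j)_{j\ne k}\in\bbR^{N-1}$,
\[
  \int_{\bbR^{N-1}}\frac{\rmd\tilde a}{(|\tilde a|^2+a_k^2)^{(N+ps)/2}}=c_{N,p,s}\,|a_k|^{-1-ps}
\]
(finite because $N+ps>N-1$), one is left with $N^{p-1}c_{N,p,s}\sum_{k=1}^N I_k$, which is the claim.

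For the upper bound $\sum_{j=1}^N I_j\le C_2[f]_{W_s^p}^p$ I would insert an intermediate point. Fix $j$ and $a\ne 0$, set $B_a:=B(\tfrac a2 e_j,\tfrac{|a|}4)$, and note that $|z|\asymp|a|$ and $|ae_j-z|\asymp|a|$ for every $z\in B_a$. Averaging $|f(x)-f(x+ae_j)|^p\le 2^{p-1}\bigl(|f(x)-f(x+z)|^p+|f(x+z)-f(x+ae_j)|^p\bigr)$ over $z\in B_a$ with the weight $|B_a|^{-1}=c_N|a|^{-N}$, integrating in $x$ and then in $a$ against $|a|^{-1-ps}$, and applying Tonelli, one bounds the first contribution by a dimensional constant times
\[
  \int_{\bbR^N}\!\int_{\bbR^N}|f(x)-f(x+z)|^p\Bigl(\int_{\{a\ne 0:\,z\in B_a\}}|a|^{-(N+1+ps)}\,\rmd a\Bigr)\rmd z\,\rmd x ,
\]
in which the inner $a$-integral is $\lesssim|z|^{-(N+ps)}$ because throughout $\{a\ne 0:z\in B_a\}$ one has $|a|\asymp|z|$ while this set has measure $\asymp|z|$; hence the first contribution is $\lesssim[f]_{W_s^p}^p$. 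The second contribution is handled identically after the substitutions $w=z-ae_j$, $y=x+ae_j$, which turn it into an integral of the same shape (now with $|w|\asymp|a|$ on the relevant set). Summing over $j$ yields $\sum_j I_j\le C_2[f]_{W_s^p}^p$.

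The routine but genuinely delicate point — where I expect to spend the most care — is this last inequality: one must choose the balls $B_a$ so that \emph{both} $|z|$ and $|ae_j-z|$ remain comparable to $|a|$, and then verify that for fixed $z$ the set $\{a\ne 0:z\in B_a\}$ has, simultaneously, $|a|\asymp|z|$ on it and measure $\asymp|z|$, which is exactly what makes $\int|a|^{-(N+1+ps)}\,\rmd a$ over it comparable to $|z|^{-(N+ps)}$. The lower bound is the soft half, its only real ingredient being the convergence of the transverse integral, which holds because $N+ps>N-1$.
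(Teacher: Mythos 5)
Your proof is correct and is essentially the argument that the paper outsources to \cite[Lemma 4.33]{Demengel2012}, whose key point (that $f\in L^p$ is never invoked) you have spelled out in full: telescoping along coordinate directions plus a Fubini reduction for the lower bound, and intermediate-point averaging with a Tonelli exchange for the upper bound. One harmless imprecision: in the upper bound the set $\{a\ne 0:z\in B_a\}$ has measure $\lesssim|z|$, not $\asymp|z|$ (it can be empty when $z$ is far from the $e_j$-axis), but only the one-sided bound is needed there, so the estimate stands.
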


The following construction is used in the proof of the embedding theorem.
Unlike the classical case, here we need that $\varphi $ equals $1$
in a neighborhood of $0$
to ensure that its derivatives vanish near $0$.

\begin{Lemma} \label{Lm:Ls1}
Suppose that $1\le  p_0, p_1<\infty$, $0<s\le 1$,
$f\in W_s^{\vec p}(\bbR^N)$, $\varphi\in C_c^{\infty}(\bbR)$
with $\varphi(t)=1$ for $|t|<1$ and $\varphi(t)=0$ for $|t|>A$,
where $A$ is a constant.
Let
 $\alpha = 1 - 1/p_1 - s$ for $0<s<1$ and $\alpha = -1/p_1+\eta$ for $s=1$,
 where $0<\eta<1$.
Set
\begin{align}\label{eq:g}
  g(t,x) &= \frac{\varphi(t)}{t^N}\int_{[0,t]^N} f(x+y)\, \rmd y,\quad t>0, \\
  g(0,x) &= \lim_{t\rightarrow 0} g(t,x).
\end{align}
We have
\begin{align}
\left\| t^{\alpha} \nabla_x g(t,x)  \right\|_{L^{p_1}(\bbR^{N+1})}
&\lesssim  \|f\|_{W_s^{\vec p}}.  \label{eq:s:11}
\end{align}
Moreover, there are two functions $h_0$ and $h_1$ such that $\partial_t g(t,x)
= h_0(t,x)+h_1(t,x)$ and
\begin{align}
\left\| t^{\alpha}  h_0(t,x)  \right\|_{L^{p_1}(\bbR^{N+1})}
&\lesssim  \|f\|_{W_s^{\vec p}},   \label{eq:s:11b} \\
\left\|  h_1 (t,\cdot ) \right\|_{L^{p_0}(\bbR^N)}
&\lesssim  \|f\|_{L^{p_0}} 1_{[1,A]}(t).  \label{eq:s:11c}
\end{align}
Furthermore, if $p_0\le p_1$, we have
\begin{align}
\left\| t^{\alpha} \nabla g (t,x) \right\|_{L^{p_1}(\bbR^{N+1})}
&\lesssim  \|f\|_{W_s^{\vec p}}.  \label{eq:s:11d}
\end{align}
\end{Lemma}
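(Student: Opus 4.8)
The plan is to regard $g(t,\cdot)$, for each fixed $t>0$, as the integral of the locally integrable function $f$ over the translating cube $x+[0,t]^N$, which is absolutely continuous along every coordinate line. Its weak $x$-derivatives are then given by the finite-difference identity
\[ \partial_{x_i}g(t,x)=\frac{\varphi(t)}{t^N}\int_{[0,t]^{N-1}}\bigl(f(x+w+te_i)-f(x+w)\bigr)\,\rmd w, \]
where $w$ ranges over the face $[0,t]^{N-1}$ spanned by $\{e_j:j\neq i\}$. Writing $G(t,x)=t^{-N}\int_{[0,t]^N}f(x+y)\,\rmd y$, so that $g=\varphi\,G$, the product rule together with differentiation of the cube in the scale parameter $t$ gives
\[ \partial_t g=\varphi'(t)\,G(t,x)+\varphi(t)\,\partial_tG(t,x),\qquad \partial_tG(t,x)=\frac1t\sum_{i=1}^N\bigl(M_i(t,x)-G(t,x)\bigr), \]
with $M_i(t,x)=t^{-(N-1)}\int_{[0,t]^{N-1}}f(x+w+te_i)\,\rmd w$ the average of $f$ over the $i$th top face; the algebraic point is that the homogeneity term $-NG/t$ and the $N$ boundary terms $M_i/t$ combine into genuine increments of $f$. (A reader who prefers to avoid weak derivatives may first establish everything for $f\in C^\infty\cap W_s^{\vec p}$ by the mollification of Lemma~\ref{Lm:density} and then pass to the limit.) Throughout set $\omega_i(t)=\|f(\cdot+te_i)-f\|_{L^{p_1}}^{p_1}$; by Proposition~\ref{prop:p1}, $\sum_i\int_0^\infty\omega_i(t)\,t^{-1-sp_1}\,\rmd t\lesssim[f]_{W_s^{p_1}}^{p_1}$ when $0<s<1$, while for $s=1$ one only has $\omega_i(t)\le t^{p_1}\|\nabla f\|_{L^{p_1}}^{p_1}$, which is why $\eta>0$ is imposed and why it is essential that $\varphi$ --- hence $t$ in every integral below --- has bounded support.

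For (\ref{eq:s:11}) I would apply Jensen's inequality with the probability measure $t^{-(N-1)}\,\rmd w$ on the face, integrate in $x$ (translation invariance collapses the face average and makes the inner $x$-integral equal to $\omega_i(t)$), and then integrate in $t$ against $t^{\alpha p_1}$. The identity $\alpha p_1-p_1=-1-sp_1$ (respectively $-1-p_1(1-\eta)$ when $s=1$) is exactly what makes the weight work: the remaining $t$-integral is $\|\varphi\|_\infty^{p_1}\int_0^\infty\omega_i(t)\,t^{-1-sp_1}\,\rmd t\lesssim[f]_{W_s^{p_1}}^{p_1}$ in the first case, and a convergent integral of $t^{-1+\eta p_1}$ over $\supp\varphi$ (after bounding $\omega_i$) in the second.

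For the splitting of $\partial_t g$ I would take $h_1(t,x)=\varphi'(t)G(t,x)$ and $h_0(t,x)=\varphi(t)\partial_tG(t,x)$. Since $\varphi\equiv1$ near $0$ and vanishes off $[-A,A]$, $\varphi'$ is supported in $\{1\le|t|\le A\}$, and Minkowski's integral inequality gives $\|G(t,\cdot)\|_{L^{p_0}}\le\|f\|_{L^{p_0}}$; hence $\|h_1(t,\cdot)\|_{L^{p_0}}\lesssim\|f\|_{L^{p_0}}\,1_{[1,A]}(t)$, which is (\ref{eq:s:11c}). For $h_0$, I would write $M_i-G$ as the double average over $(y_i,w)\in[0,t]\times[0,t]^{N-1}$ of $f(x+w+te_i)-f(x+w+y_ie_i)$, use $|\sum_{i\le N}a_i|^{p_1}\le N^{p_1-1}\sum_i|a_i|^{p_1}$ and Jensen, integrate in $x$ (the two points differ by $(t-y_i)e_i$, so the $x$-integral is $\omega_i(t-y_i)$), substitute $u=t-y_i$, multiply by $t^{\alpha p_1}$, and use Fubini in $(t,u)$; since $\alpha p_1-p_1-1=-2-sp_1$ one has $\int_u^\infty|\varphi(t)|^{p_1}t^{-2-sp_1}\,\rmd t\lesssim u^{-1-sp_1}$, so again everything reduces to $\sum_i\int_0^\infty\omega_i(u)\,u^{-1-sp_1}\,\rmd u\lesssim[f]_{W_s^{p_1}}^{p_1}$ (with the $s=1$ case handled as before). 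This yields (\ref{eq:s:11b}). Finally, (\ref{eq:s:11d}) follows from $\nabla g=(\nabla_xg,\,h_0+h_1)$ together with (\ref{eq:s:11}), (\ref{eq:s:11b}), and a bound for $\|t^\alpha h_1\|_{L^{p_1}}$; here $p_0\le p_1$ is used, since $G(t,\cdot)=f*k_t$ with $k_t=t^{-N}1_{[-t,0]^N}$ and Young's inequality gives $\|G(t,\cdot)\|_{L^{p_1}}\le\|f\|_{L^{p_0}}\|k_t\|_{L^r}=\|f\|_{L^{p_0}}t^{-N(1/p_0-1/p_1)}$ with $1/r=1-1/p_0+1/p_1$, the constraint $1/r\le1$ being precisely $p_0\le p_1$; as $\varphi'$ lives on $[1,A]$ the factor $t^{-N(1/p_0-1/p_1)}$ is bounded there, so $\|t^\alpha h_1\|_{L^{p_1}(\bbR^{N+1})}^{p_1}\lesssim\|f\|_{L^{p_0}}^{p_1}\int_1^A t^{\alpha p_1}\,\rmd t\lesssim\|f\|_{L^{p_0}}^{p_1}$.

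The main obstacle will be the analysis of $\partial_t g$. One must first spot the cancellation turning $\partial_tG$ --- a priori the sum of a singular homogeneity term $-NG/t$ and $N$ face-average terms $M_i/t$ --- into an object built solely from increments $f(\cdot+(t-y_i)e_i)-f(\cdot)$, and then check that the extra scale-average $\tfrac1t\int_0^t\omega_i$ is absorbed by the weight $t^{\alpha p_1}$ after a Fubini, which works only thanks to the exact value of $\alpha$. Secondary technical points are the justification of the finite-difference formulas for non-smooth $f$ (by absolute continuity along lines, or by smooth approximation), and, for (\ref{eq:s:11d}), the observation that $p_0\le p_1$ is exactly the hypothesis under which the $L^{p_0}$-controlled average $G$ can be pushed into $L^{p_1}$ via Young's inequality.
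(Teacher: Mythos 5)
Your proof is correct and follows essentially the same route as the paper's: the same splitting $\partial_t g = \varphi'(t)\,G(t,x) + \varphi(t)\,\partial_t G(t,x) =: h_1 + h_0$, the same finite-difference representation of $\partial_{x_i}g$, and the same observation that the homogeneity term $-NG/t$ cancels against the boundary terms to leave pure increments of $f$. The only technical differences are cosmetic: for $h_0$ the paper performs the change of variables $x \to x+ty$ then $t \to (1-y_i)t$ to land directly on the Gagliardo integrand, whereas you take a double average over the cube and finish with Fubini in $(u,t)$ against $\int_u^\infty |\varphi(t)|^{p_1}t^{-2-sp_1}\rmd t\lesssim u^{-1-sp_1}$; and for the $L^{p_1}$ bound of $h_1$ (needed for \eqref{eq:s:11d}) you invoke Young's inequality for $f*k_t$ while the paper uses a pointwise bound via Jensen, peeling off $p_1-p_0$ powers against $\|f\|_{L^{p_0}}$. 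Both variants are valid and of comparable length, and both make the role of $p_0\le p_1$ explicit at the right place.

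Two small remarks that would tighten the write-up. First, the inequality $\sum_i\int_0^\infty\omega_i(t)\,t^{-1-sp_1}\rmd t\lesssim[f]_{W_s^{p_1}}^{p_1}$ uses the symmetry $\omega_i(t)=\omega_i(-t)$ to pass from the two-sided integral of Proposition~\ref{prop:p1} to the half-line; that step is worth stating. Second, the $s=1$ branch of \eqref{eq:s:11} is where the bound $\omega_i(t)\le t^{p_1}\|\partial_{x_i}f\|_{L^{p_1}}^{p_1}$ enters and where the compact support of $\varphi$ is indispensable (the resulting $t$-integral $\int_0^A t^{-1+\eta p_1}\rmd t$ converges only because $\eta>0$ and the integral is truncated at $A$); you flag this but it deserves to be written out rather than mentioned parenthetically.
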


\begin{proof}
First, we estimate
$\| \partial_{x_i} g(t,x) \|_{L^{p_1}(\bbR^{N+1})}$,
$1\le i\le N$.
Denote
\begin{align*}
\hat x_i &= x- x_ie_i  \mbox{\quad and\quad}
\rmd\hat x_i  = \prod_{j\ne i} \rmd x_j.
\end{align*}
Observe
\begin{align*}
g(t,x) &= \frac{\varphi(t)}{t^N}\int_{x_i}^{x_i+t}
    \int_{[0,t]^{N-1}} f(\hat x_i + \hat y_i + y_ie_i)
    \,  \rmd \hat y_i \,  \rmd y_i.
\end{align*}
We get
\begin{align}
 \partial_{x_i} g(t,x)
&= \frac{\varphi(t)}{t^N}
    \int_{[0,t]^{N-1}} \Big(f(\hat x_i + \hat y_i + (x_i+t)e_i) - f(\hat x_i + \hat y_i +  x_i e_i)
    \Big)
      \rmd \hat y_i  \nonumber \\
&= \frac{\varphi(t)}{t^N}
    \int_{[0,t]^{N-1}} \Big(f(x+ te_i+\hat y_i   )
     - f(  x + \hat y_i  )
    \Big)
      \rmd \hat y_i \nonumber \\
&= \frac{\varphi(t)}{t }
    \int_{[0,1]^{N-1}} \Big(f(x+ te_i+t\hat y_i   )
     - f(  x + t\hat y_i  )
    \Big)
      \rmd \hat y_i.  \nonumber
\end{align}
Hence for $0<s<1$,
\begin{align*}
\| t^{\alpha}\partial_{x_i} g(t,x) \|_{L^{p_1}(\bbR^{N+1})}^{p_1}
\!&\le
  \!\| \varphi(t)\|_{L^{\infty}}^{p_1}
   \int_0^{\infty}\!\!\!
   \int_{\bbR^N}\!\!
   \int_{[0,1]^{N-1}}\!\!\!
     \frac{|f(x+ te_i+t\hat y_i   )
     - f(  x + t\hat y_i  )|^{p_1}}
     {t^{1+sp_1}} \rmd \hat y_i \,  \rmd x\,    \rmd t \\
&\lesssim [f]_{W_s^{p_1}}^{p_1}.
\end{align*}

For the case $s=1$, we have $\alpha = -1/p_1+\eta$.
We see from (\ref{eq:g}) that
\[
  \nabla_x g(t,x)
  = \frac{\varphi(t)}{t^N} \int_{[0,t]^N} \nabla f(x+y)\rmd y.
\]
Thus we obtain
\begin{align*}
\|t^{\alpha}\nabla_x g(t,x)\|_{L^{p_1}(\bbR^{N+1})}
\le  \|t^{\alpha} \varphi(t)\|_{L^{p_1}} \|\nabla f\|_{L^{p_1}}<\infty
\end{align*}
proving (\ref{eq:s:11}).

Next we deal with $\partial_t g(t,x)$.
A simple computation shows that
\begin{align*}
 \partial_t g(t,x)
&= \varphi(t)\bigg(
    \frac{-N}{t^{N+1}}\int_{[0,t]^N} f(x+y)\rmd y
     + \frac{1}{t^N} \sum_{i=1}^N
    \int_{[0,t]^{N-1}} f(x+\hat y_i + te_i) \rmd \hat y_i
   \bigg) \\
 &\qquad + \frac{\varphi'(t)}{t^N} \int_{[0,t]^N} f(x+y)\rmd y\\
&=  \frac{\varphi(t)}{t^{N+1}}
   \sum_{i=1}^N \int_{[0,t]^N}
      \Big(f(x+\hat y_i + te_i)
        - f(x+y)\Big)\rmd y
       +  \varphi'(t)
        \int_{[0,1]^N} f(x+ty)\rmd y\\
&=  \frac{\varphi(t)}{t}
   \sum_{i=1}^N \int_{[0,1]^N}
      \Big(f(x+t\hat y_i + te_i)
        - f(x+ty)\Big)\rmd y
       +  \varphi'(t)
        \int_{[0,1]^N} f(x+ty)\rmd y\\
&:= h_0(t,x)+ h_1(t,x).
\end{align*}

Next, we prove (\ref{eq:s:11b}).
For $0<s<1$,  we have
\begin{align*}
\|t^{\alpha}h_0(t,x)\|_{L^{p_1}(\bbR^{N+1})}^{p_1}
&\lesssim \|\varphi\|_{L^{\infty}}^{p_1}
   \sum_{i=1}^N
     \int_0^{\infty}\!
       \int_{[0,1]^N}\!
      \int_{\bbR^N}
      \frac{|f(x+t\hat y_i + te_i)
        - f(x+ty)|^{p_1}}{t^{1+sp_1}}
      \rmd x  \,\rmd y \, \rmd t.
\end{align*}
By the change of variables
$x\rightarrow x+ty$, we write
\begin{align}
\|t^{\alpha}h_0(t,x)\|_{L^{p_1}(\bbR^{N+1})}^{p_1}
&\lesssim \|\varphi\|_{L^{\infty}}^{p_1}
   \sum_{i=1}^N
     \int_0^{\infty}
       \int_{[0,1]^N}
      \int_{\bbR^N}
      \frac{|f(x+ t(1-y_i)e_i)
        - f(x)|^{p_1}}{t^{1+sp_1}}
      \rmd x  \,\rmd y \, \rmd t \nonumber \\
&=  \|\varphi\|_{L^{\infty}}^{p_1}
   \sum_{i=1}^N
     \int_0^{\infty}
       \int_0^1(1-y_i)^{sp_1}
      \int_{\bbR^N}
      \frac{|f(x+ t e_i)
        - f(x)|^{p_1}}{t^{1+sp_1}}
      \rmd x  \,\rmd y_i \, \rmd t  \nonumber \\
&\lesssim [f]_{W_s^{p_1}}^{p_1},   \label{eq:s:14}
\end{align}
having used the change of variables $t\to (1-y_i)t$.
And for the case $s=1$,
we have
\begin{align*}
\|t^{\alpha}h_0(t,x)\|_{L_x^{p_1}}
&\le
|t^{\alpha-1}\varphi(t)|
   \sum_{i=1}^N \int_{[0,1]^N}
      \Big\|f(x+t\hat y_i + te_i)
        - f(x+ty)\Big\|_{L_x^{p_1}}\rmd y \\
&=
|t^{\alpha-1}\varphi(t)|
   \sum_{i=1}^N \int_{[0,1]^N}
      \Big\| \int_{ty_i}^{t} \partial_{x_i}f(x+t\hat y_i + \tau e_i)
      \rmd \tau\Big\|_{L_x^{p_1}}\rmd y \\
&\lesssim |t^{\alpha} \varphi(t)|      \cdot  \|\nabla f\|_{L^{p_1}}.
\end{align*}
Hence
\begin{equation}\label{eq:t1}
\|t^{\alpha}h_0(t,x)\|_{L^{p_1}(\bbR^{N+1})}\lesssim \|\nabla f\|_{L^{p_1}}.
\end{equation}

To prove (\ref{eq:s:11c}), note that   $\varphi'(t)= 0$
for $0<t<1$ or $t>A$. For $1\le t\le A$, we see
from Minkowski's inequality that
\begin{align}
\|h_1(t,\cdot )\|_{L^{p_0}(\bbR^N)}
&\le  |\varphi'(t)|  \int_{[0,1]^N} \|f(\cdot +ty)\|_{L^{p_0}}\rmd y
\le  |\varphi'(t)| \cdot  \|f\|_{L^{p_0}}.
\end{align}
Hence (\ref{eq:s:11c}) is valid.

Finally, we consider the case $p_0\le p_1$.
To prove (\ref{eq:s:11d}), it suffices to show that
$
\|t^{\alpha}h_1(t,x)\|_{L^{p_1}(\bbR^{N+1})} \lesssim \|f\|_{L^{p_0}}$,
thanks to (\ref{eq:s:11}) and (\ref{eq:s:11b}).

Recall that   $\varphi'(t)= 0$
for $0<t<1$. For $t\ge 1$, we have
\begin{align}
|h_1(t,x)|
&\le  |\varphi'(t)|  \Big(\int_{[0,1]^N} |f(x+ty)|^{p_0}
  \rmd y\Big)^{1/p_0} \nonumber \\
&= |\varphi'(t)|
\Big(\frac{1}{t^N}\int_{[0,t]^N} |f(x+y)|^{p_0}
  \rmd y\Big)^{1/p_0} \nonumber \\
&\le |\varphi'(t)|\cdot
\|f\|_{L^{p_0}}. \nonumber 
\end{align}
Hence
\begin{align}
\|t^{\alpha}h_1(t,x)\|_{L^{p_1}(\bbR^{N+1})}^{p_1}
&\le\int_{\bbR^N} \int_1^{\infty} |t^{\alpha}\varphi'(t)|^{p_1}
   \|f\|_{L^{p_0}}^{p_1-p_0}
   \Big(\int_{[0,1]^N} |f(x+ty)|
  \rmd y\Big)^{p_0}  \,\rmd t\, \rmd x\nonumber  \\
&\le\int_{\bbR^N} \int_1^{\infty} |t^{\alpha}\varphi'(t)|^{p_1}
   \|f\|_{L^{p_0}}^{p_1-p_0}
    \int_{[0,1]^N} |f(x+ty)|^{p_0}
\,  \rmd y  \,\rmd t\, \rmd x\nonumber \\
&\le \|f\|_{L^{p_0}}^{p_1} . \nonumber 
\end{align}
This completes the proof.
\end{proof}

For the case  $ 1\le  p_0\le   p_1 <\infty$,
the nonuniform Sobolev space $W_s^{(p_0,p_1)}$
is a subspace of the classical one $W_s^{p_1}$.
Moreover, the inclusion is in fact an embedding
and is also valid for $s=1$.

\begin{Lemma}\label{Lm:L3a}
Suppose that $0<s\le 1$ and $1\le p_0\le  p_1<\infty$. We have
\[
  W_s^{\vec p} \hookrightarrow W_s^{p_1} .
\]
Moreover, the inclusion is not true if $p_0>p_1$.
\end{Lemma}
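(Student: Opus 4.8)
Since the Gagliardo seminorm (and, when $s=1$, the gradient) occurs in both $\|\cdot\|_{W_s^{\vec p}}$ and $\|\cdot\|_{W_s^{p_1}}$, the embedding reduces to the single estimate $\|f\|_{L^{p_1}}\lesssim \|f\|_{W_s^{\vec p}}$ for $f\in W_s^{\vec p}$. The case $p_0=p_1$ is trivial, since then $W_s^{(p_1,p_1)}=W_s^{p_1}$ with identical norms; so assume $1\le p_0<p_1<\infty$, in particular $p_1>1$. The plan is to compare $f$ with its local averages: for $r>0$ set $f_r(x)=|B(0,r)|^{-1}\int_{|h|<r}f(x+h)\,\rmd h$, write $f=(f-f_r)+f_r$, bound each piece in $L^{p_1}$, and put $r=1$.

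For the oscillation term, $|f(x)-f_r(x)|\le |B(0,r)|^{-1}\int_{|h|<r}|f(x)-f(x+h)|\,\rmd h$, so Minkowski's integral inequality followed by H\"older's inequality in $h$ with exponents $p_1,p_1'$ (pairing $|f(\cdot)-f(\cdot+h)|$ with the weight $|h|^{(N+sp_1)/p_1}$) gives, for $0<s<1$,
\[
  \|f-f_r\|_{L^{p_1}}\lesssim r^{s}\,[f]_{W_s^{p_1}},
\]
the point being that $[f]_{W_s^{p_1}}^{p_1}=\iint_{\bbR^N\times\bbR^N}|f(x)-f(x+h)|^{p_1}|h|^{-N-sp_1}\,\rmd x\,\rmd h$, so the $h$-integral over $|h|<r$ is at most $[f]_{W_s^{p_1}}^{p_1}$ and the remaining power of $r$ is computed from the weight. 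When $s=1$ the identical computation with $f(x+h)-f(x)=\int_0^1 h\cdot\nabla f(x+th)\,\rmd t$ yields $\|f-f_r\|_{L^{p_1}}\lesssim r\,\|\nabla f\|_{L^{p_1}}$. For the average term, $f_r=f*\chi_r$ with $\chi_r=|B(0,r)|^{-1}1_{B(0,r)}$; since $p_0\le p_1$, Young's inequality with $1+1/p_1=1/p_0+1/q$ (so $1<q<\infty$) gives
\[
  \|f_r\|_{L^{p_1}}\le \|f\|_{L^{p_0}}\|\chi_r\|_{L^q}\lesssim r^{-N(1/p_0-1/p_1)}\,\|f\|_{L^{p_0}}.
\]
Taking $r=1$ and adding the two estimates, $\|f\|_{L^{p_1}}\lesssim [f]_{W_s^{p_1}}+\|f\|_{L^{p_0}}=\|f\|_{W_s^{\vec p}}$ (with $[f]_{W_s^{p_1}}$ replaced by $\|\nabla f\|_{L^{p_1}}$ if $s=1$), which is $W_s^{\vec p}\hookrightarrow W_s^{p_1}$.

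For the last assertion, when $p_0>p_1$ I would produce $f\in W_s^{\vec p}\setminus W_s^{p_1}$. Choose $\gamma$ with $\max\{N/p_0,\ N/p_1-s\}<\gamma<N/p_1$; this interval is nonempty exactly because $p_0>p_1$ and $s>0$. Let $f\in C^\infty(\bbR^N)$ be bounded with $f(x)=|x|^{-\gamma}$ for $|x|\ge 2$. Then $\gamma p_0>N$ forces $f\in L^{p_0}$ and $\gamma p_1<N$ forces $f\notin L^{p_1}$, so $f\notin W_s^{p_1}$; and $[f]_{W_s^{p_1}}<\infty$ (resp. $\nabla f\in L^{p_1}$ when $s=1$) because $(\gamma+s)p_1>N$: splitting the inner Gagliardo integral according to whether the second point lies within distance $|x|/2$ of $x$ or not, and using $\gamma p_1<N$ to control the contribution near the origin, one finds that after integration in the second variable the density is $\lesssim\langle x\rangle^{-(\gamma+s)p_1}$ at infinity, which is integrable; for $s=1$ simply $|\nabla f(x)|\lesssim\langle x\rangle^{-\gamma-1}$ with $(\gamma+1)p_1>N$. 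Hence $f\in W_s^{\vec p}$ but $f\notin W_s^{p_1}$.

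The main obstacle is the oscillation estimate together with the verification that the counterexample has finite Gagliardo seminorm; both rest on the same bookkeeping of the exponents $N,s,p_1$, which is precisely what makes the parameter $s$ enter the admissible range of decay rates for the counterexample.
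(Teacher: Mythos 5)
Your proof is correct, and the embedding half takes a genuinely different and noticeably more elementary route than the paper's. The paper approaches $W_s^{\vec p}\hookrightarrow W_s^{p_1}$ via the auxiliary function $g(t,x)=\varphi(t)t^{-N}\int_{[0,t]^N}f(x+y)\,\rmd y$ from Lemma~\ref{Lm:Ls1}, then recovers $g(0,\cdot)=f$ by convolving against the fundamental solution $E$ of the Laplacian in $\bbR^{N+1}$ via the identity $g=\Delta(\Phi E)*g-2(\nabla\Phi\cdot\nabla E)*g-((\Delta\Phi)E)*g$, and estimates each piece separately (steps (S1)--(S3)) using the weighted $L^{p_1}$ bounds on $\nabla g$. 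Your decomposition $f=(f-f_r)+f_r$, with the oscillation term controlled by H\"older against the weight $|h|^{N/p_1+s}$ to produce $r^s[f]_{W_s^{p_1}}$ and the smooth part controlled by Young's inequality, arrives at the same conclusion in a few lines and handles $s<1$ and $s=1$ with the identical template. The paper's heavier machinery is not wasted, though: Lemma~\ref{Lm:Ls1} and the decomposition \eqref{eq:s:e17} are reused in the proof of Theorem~\ref{thm:embedding:fractional} to push past the $L^{p_1}$ endpoint to $L^{Np_1/(N-sp_1)}$, which your local-average argument does not give directly. For the counterexample, your choice of a bounded smooth $f$ with $f(x)=|x|^{-\gamma}$ for $|x|\ge2$, subject to $\max\{N/p_0,\,N/p_1-s\}<\gamma<N/p_1$, is essentially the paper's Example~\ref{Ex:ex1} with $f(x)=(1+|x|^2)^{-\delta/2}$ (your $\gamma$ plays the role of $\delta$), and the exponent bookkeeping ($\gamma p_0>N$, $\gamma p_1<N$, $(\gamma+s)p_1>N$) is identical.
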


\begin{proof}
Let $\varphi$ and $\psi$  be functions in
$C_c^{\infty}(\bbR^N)$
 and $C_c^{\infty}(\bbR)$, respectively, with values
between $0$ and $1$ and equal to $1$ in neighborhoods of $0$.
Set $\Phi(t,x) = \varphi(x)\psi(t)$.
Take some $A>0$ such that $\mathrm{supp}\, \Phi \subset  [-A,A]^{N+1}$.

Fix some $f\in W_s^{\vec p}$.
Let $\alpha $ and $g$ be defined as in Lemma~\ref{Lm:Ls1},
$\delta_0$ be the Dirac measure at $0$,
and
\[
  E(t,x) = \begin{cases}
     \dfrac{c_N}{(t^2+|x|^2)^{(N-1)/2}}, & N\ge 2,\\
     c_1 \log (t^2+|x|^2), &N=1,
  \end{cases}
\]
 be the fundamental solution of the Laplacian satisfying
$
  \Delta E = \delta_0$.

Recall that for a tempered distribution $\Lambda$ and a function $h\in C^{\infty}$, for  which the function itself and all of its derivatives have at most polynomial
growth at infinity,
the product
$h  \Lambda $ is the  tempered  distribution \cite[Definition 2.3.15]{Grafakos2014} defined by
\begin{equation}\label{eq:s:e1}
\langle h  \Lambda, u \rangle
 =\langle  \Lambda, h  u\rangle, \qquad \forall u\in\mathscr S.
\end{equation}
Since $\Phi(t,x)$ equals $1$ in a neighbourhood of $0$, we have
\[
     \Phi \Delta E = \Phi \delta_0 = \delta_0.
\]
Hence
\begin{align}
g & = \delta_0 * g \nonumber \\
  & = \Delta(\Phi E) *g
      -2 (\nabla \Phi \cdot \nabla E) *g
        - ((\Delta\Phi)E) *g
        \nonumber \\
  &=   ( (\nabla \Phi) E) * \nabla g + (\Phi \nabla  E )* \nabla g
      -2 (\nabla \Phi \cdot \nabla E) *g
        - ((\Delta\Phi) E) *g
          ,\label{eq:s:e17}
\end{align}
where we use the notation
\[
   u * v = \sum_{i=1}^{N+1} u_i * v_i
\]
to denote the convolution of two $\mathbb C^{N+1}$-valued
functions.

Since $g(0,x) = f(x)$ a.e., it suffices to show  that $\|g(0,\cdot)\|_{L^{p_1}}\lesssim \|f\|_{W_s^{\vec p}}$.
We prove it in the following steps:

\medskip\noindent
(S1)\,
First, we estimate $\|((\nabla \Phi) E) * \nabla g (0,\cdot) \|_{L^{p_1}}$.
For $N\ge 2$, we have
\[
  ((\nabla \Phi) E )* \nabla g  (0,x)
  = \int_0^A  \int_{\bbR^N} \frac{ (\nabla \Phi)(-t,y)\cdot  \nabla g(t,x-y)}
         {(t^2 + |y|^2)^{(N-1)/2}} \rmd y  \,\rmd t.
\]
Applying Minkovski's inequality, we get
\begin{align*}
\|((\nabla \Phi) E )* \nabla g (0,\cdot) \|_{L^{p_1}}
&\le \| \nabla \Phi\|_{L^{\infty}}
  \int_0^A \| \nabla g(t,\cdot)\|_{L^{p_1}}   \int_{[-A,A]^N} \frac{\rmd y}
         {(t  + |y| )^{ N-1 }} \,\rmd t \\
&\lesssim
  \int_0^A \| \nabla g(t,\cdot)\|_{L^{p_1}}   \int_{[0,A]^2} \frac{ \rmd y_1 \rmd y_2}
         { t + y_1 + y_2 }   \,\rmd t \\
&\approx
  \int_0^A \| \nabla g(t,\cdot)\|_{L^{p_1}}    \int_0^A \log \frac{t+y_1+A}{t+y_1}
   \rmd y_1 \,\rmd t
            \\
&\lesssim
  \int_0^A \| \nabla g(t,\cdot)\|_{L^{p_1}}   \int_0^A  \frac{1}{(t+y_1)^{\varepsilon}}
     \rmd y_1  \,\rmd t
           \mbox{\qquad  $(0<\varepsilon<1)$} \\
&\lesssim
  \int_0^A \| \nabla g(t,\cdot)\|_{L^{p_1}}  \rmd t \\
&\lesssim
   \| t^{-\alpha  } \cdot 1_{[0,A]}(t) \|_{L^{p'_1}}
    \| t^{\alpha}\nabla g(t,x)\|_{L^{p_1}(\bbR^{N+1})}   \\
&\approx  \| t^{\alpha}\nabla g(t,x)\|_{L^{p_1}(\bbR^{N+1})}\\
&\lesssim \|f\|_{W_s^{\vec p}},
\end{align*}
where we applied (\ref{eq:s:11d}) in the last step.
When $N\ge 3$, the first inequality is obtained by successively
integrating the variables $y_N,y_{N-1},\dots, y_3$ over $[0,\infty)$;
the effect of this is the reduction of the exponent by $1$ in each integration, so after $N-2$ integrations,  the exponent
becomes  $(N-1)-(N-2)=1$.

For the case  $N=1$, we have $|E(t,y)| = |c_N \log (t^2 + y^2)|\lesssim
 (|t|+|y|)^{-\varepsilon}$ for any $\varepsilon>0$.
Hence
\begin{align*}
\|((\nabla \Phi) E) * \nabla g (0,\cdot) \|_{L^{p_1}}
&\lesssim
  \int_0^A \| \nabla g(t,\cdot)\|_{L^{p_1}}   \int_0^A  \frac{1}{(t+y_1)^{\varepsilon}}
     \rmd y_1   \,\rmd t
           \mbox{\qquad  $(0<\varepsilon<1)$} \\
&\lesssim \|f\|_{W_s^{\vec p}}.
\end{align*}

\medskip\noindent
(S2)\,
Next,  we estimate $\|  (\Phi  \nabla E ) * \nabla g (0,\cdot) \|_{L^{p_1}}$.
Observe that for $(t,y)\in [-A,A]^{N+1}$,
\[
  |\nabla E(t,y)| \lesssim \frac{1}{(|t|+|y|)^N}.
\]
Similar arguments as the previous case show that
\begin{align*}
\|(\Phi  \nabla E )* \nabla g (0,\cdot) \|_{L^{p_1}}
&\lesssim
  \int_0^A \| \nabla g(t,\cdot)\|_{L^{p_1}}   \int_{[-A,A]^N} \frac{\rmd y}
         {(t + |y|)^N }  \,\rmd t \\
&\approx
  \int_0^A \| \nabla g(t,\cdot)\|_{L^{p_1}}   \int_0^A \frac{ \rmd y_1 }
         { t + y_1  }  \,\rmd t\\
&\approx
  \int_0^A \| \nabla g(t,\cdot)\|_{L^{p_1}}   \log \frac{t+A}{t}   \rmd t
            \\
&\lesssim
  \int_0^A \| \nabla g(t,\cdot)\|_{L^{p_1}}    t^{-\varepsilon} \rmd t
          \\
&\lesssim
   \| t^{-(\alpha+\varepsilon) } \cdot 1_{[0,A]} (t)\|_{L^{p'_1}}
    \| t^{\alpha}\nabla g(t,x)\|_{L^{p_1}(\bbR^{N+1})}   \\
&\lesssim \|f\|_{W_s^{\vec p}},
\end{align*}
where $0<\varepsilon<s$ for $s<1$ and $0<\varepsilon<1-\eta$ for $s=1$.
Again, we apply (\ref{eq:s:11d}) in the last step.

\medskip\noindent
(S3)\,
We deal with the last two terms.
Observe that both  $\nabla \Phi$ and $\Delta \Phi$ vanish in a neighborhood
of $0$. Hence both $(\nabla \Phi \cdot \nabla E)$ and $(\Delta\Phi) E$
are compactly supported bounded functions.
Hence
\begin{align*}
F(x):=|2(\nabla \Phi \cdot \nabla E) *g(0,x)|
+|((\Delta \Phi)E) * g(0,x)|
&\lesssim \int_0^A \int_{[-A,A]^N}
      |g(t,x-y)| \rmd y \, \rmd t .
\end{align*}
For any $q> p_0$, there is some $r>1$  such that
\[
  \frac{1}{r} + \frac{1}{p_0} = \frac{1}{q}+1.
\]
Observe that
\begin{align}
\|g(t,\cdot)\|_{L^{p_0}} = | \varphi(t)| \cdot
    \Big\|  \int_{[0,1]^N} f(\cdot +ty)\rmd y  \Big\|_{L^{p_0}}
\lesssim \| \varphi \|_{L^{\infty}}\cdot \|f\|_{L^{p_0}}.
\label{eq:s:21}
\end{align}
We see from Young's inequality that
\begin{align*}
\|F\|_{L^{q}}
&\lesssim \|f\|_{L^{p_0}} .
\end{align*}
Setting $q=p_1$, we get $\|F\|_{L^{p_1}}\lesssim \|f\|_{L^{p_0}}$.

Combing results in (S1), (S2), and (S3), yields
\[
  W_s^{(p_0,p_1)}\hookrightarrow W_s^{(p_1,p_1)} = W_s^{p_1}.
\]

For the case $p_0>p_1$, we provide a counterexample in Example~\ref{Ex:ex1}.
This completes the proof.
\end{proof}

The following is an immediate consequence.

\begin{Corollary}\label{Co:embedding:s}
Suppose that $\vec p=(p_0,p_1)$
with $1\le p_0\le p_1<\infty$.
For any $0<\tilde s< s < 1$, we have
\begin{equation}\label{eq:Ws s}
W_1^{\vec p} \hookrightarrow  W_{s}^{\vec p}  \hookrightarrow W_{\tilde s}^{\vec p}
 .
\end{equation}
\end{Corollary}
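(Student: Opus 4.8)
The plan is to reduce both embeddings to the uniform (classical) case by means of Lemma~\ref{Lm:L3a}, and then to control the Gagliardo seminorm at a smaller smoothness index by the elementary splitting of the $a$-integral in Proposition~\ref{prop:p1}. Since $\|f\|_{L^{p_0}}\le\|f\|_{W_\sigma^{\vec p}}$ for every admissible index $\sigma$ straight from the definition, in each case it suffices to bound the relevant Gagliardo seminorm of $f$ in $W_\cdot^{p_1}$ by $\|f\|_{W_\cdot^{\vec p}}$.

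I would first treat the second embedding $W_s^{\vec p}\hookrightarrow W_{\tilde s}^{\vec p}$ with $0<\tilde s<s<1$. By Lemma~\ref{Lm:L3a} every $f\in W_s^{\vec p}$ lies in $W_s^{p_1}$, so $\|f\|_{L^{p_1}}+[f]_{W_s^{p_1}}\lesssim\|f\|_{W_s^{\vec p}}$. By Proposition~\ref{prop:p1} it is enough to bound $\iint_{\bbR^N\times\bbR}\frac{|f(x)-f(x+ae_j)|^{p_1}}{|a|^{1+\tilde s p_1}}\,\rmd x\,\rmd a$ for each $j$. Split the $a$-integral at $|a|=1$: on $\{|a|\le 1\}$ one has $|a|^{-1-\tilde s p_1}\le|a|^{-1-s p_1}$ because $\tilde s<s$, so this piece is dominated by $\iint\frac{|f(x)-f(x+ae_j)|^{p_1}}{|a|^{1+s p_1}}\,\rmd x\,\rmd a\lesssim[f]_{W_s^{p_1}}^{p_1}$, again via Proposition~\ref{prop:p1}; on $\{|a|>1\}$ one uses $|f(x)-f(x+ae_j)|^{p_1}\le 2^{p_1-1}(|f(x)|^{p_1}+|f(x+ae_j)|^{p_1})$ together with $\int_{|a|>1}|a|^{-1-\tilde s p_1}\,\rmd a<\infty$ to obtain a bound $\lesssim\|f\|_{L^{p_1}}^{p_1}$. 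Adding the two pieces gives $[f]_{W_{\tilde s}^{p_1}}\lesssim[f]_{W_s^{p_1}}+\|f\|_{L^{p_1}}\lesssim\|f\|_{W_s^{\vec p}}$, hence $W_s^{\vec p}\hookrightarrow W_{\tilde s}^{\vec p}$.

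For the first embedding $W_1^{\vec p}\hookrightarrow W_s^{\vec p}$ the scheme is identical, except that Lemma~\ref{Lm:L3a} is applied in its $s=1$ form, which yields $W_1^{\vec p}\hookrightarrow W_1^{p_1}=W^{1,p_1}(\bbR^N)$, so $\|f\|_{L^{p_1}}+\|\nabla f\|_{L^{p_1}}\lesssim\|f\|_{W_1^{\vec p}}$. In the split at $|a|=1$, on $\{|a|\le 1\}$ I would use the Fundamental Theorem of Calculus and Minkowski's inequality, $\|f(\cdot+ae_j)-f\|_{L^{p_1}}\le|a|\,\|\partial_j f\|_{L^{p_1}}$, so this piece is $\le\|\partial_j f\|_{L^{p_1}}^{p_1}\int_{|a|\le 1}|a|^{(1-s)p_1-1}\,\rmd a$, which is finite precisely because $s<1$; on $\{|a|>1\}$ the same far-field estimate as before applies. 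Combining with Proposition~\ref{prop:p1} gives $[f]_{W_s^{p_1}}\lesssim\|f\|_{W_1^{\vec p}}$, and the chain \eqref{eq:Ws s} follows. No serious obstacle is expected here: this is genuinely an immediate consequence of Lemma~\ref{Lm:L3a}, the only mild point being that on the whole space $\bbR^N$ the passage from $W_s^{p_1}$ to $W_{\tilde s}^{p_1}$ needs the global $L^{p_1}$ control of $f$ (unlike on bounded domains), which Lemma~\ref{Lm:L3a} supplies.
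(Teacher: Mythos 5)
Your proposal is correct and matches the paper's proof in every essential respect: both reduce to the scalar kernel form via Proposition~\ref{prop:p1}, split the $a$-integral at $|a|=1$, use the Fundamental Theorem of Calculus (equivalently Minkowski) for the near-field bound in the first embedding and the monotonicity $|a|^{-1-\tilde sp_1}\le|a|^{-1-sp_1}$ in the second, and invoke Lemma~\ref{Lm:L3a} precisely to obtain the global $L^{p_1}$ control needed for the far-field tail. The only difference is cosmetic (you treat the second embedding before the first).
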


\begin{proof}
First, we prove that $W_1^{\vec p} \hookrightarrow  W_{s}^{\vec p}$.
Take some $f\in W_1^{\vec p}$.  As  for functions in classical Sobolev spaces,
for almost all $x\in\bbR^N$, we have
\[
  |f(x) - f(x+ae_j)  |^{p_1} = \Big|\int_0^a \partial_{x_j} f(x+te_j)\rmd t\Big|^{p_1}
   \le |a|^{p_1-1}   \int_0^a |\partial_{x_j} f(x+te_j)|^{p_1}\rmd t .
\]
Hence
\begin{align*}
\int_{\bbR^N} |f(x) - f(x+ae_j)  |^{p_1} \rmd x \le |a|^{p_1}
  \|\partial_{x_j}f\|_{L^{p_1}}^{p_1}.
\end{align*}
Therefore,
\begin{align*}
\int_{|a|<1}
  \int_{\bbR^N} \frac{|f(x) - f(x+ae_j)  |^{p_1}}{|a|^{1+sp_1}} \rmd x \, \rmd a
  \le \int_{|a|<1} \frac{|a|^{p_1}}{|a|^{1+sp_1}}  \|\partial_{x_j}f\|_{L^{p_1}}^{p_1} \rmd a
  \lesssim \|\partial_{x_j}f\|_{L^{p_1}}^{p_1}.
\end{align*}
On the other hand, since $p_0\le p_1$,
we have $W_1^{\vec p} \hookrightarrow  W_1^{p_1}$, thanks to Lemma~\ref{Lm:L3a}. Hence
\begin{align*}
\int_{|a|\ge 1}
  \int_{\bbR^N} \frac{|f(x) - f(x+ae_j)  |^{p_1}}{|a|^{1+sp_1}} \rmd x \, \rmd a
  \lesssim
    \int_{|a|\ge 1} \frac{\|f\|_{L^{p_1}}^{p_1} }{|a|^{1+sp_1}}  \rmd a
  \lesssim \|f\|_{L^{p_1}}^{p_1}.
\end{align*}
Consequently, $\|f\|_{W_s^{\vec p}} \lesssim \|f\|_{W_1^{\vec p}}$.

Next, we show that $W_s^{\vec p} \hookrightarrow  W_{\tilde s}^{\vec p}$.
Fix some $f\in W_s^{\vec p}$.
Since $\tilde s<s$, we have
\begin{align*}
\int_{|a|<1}
   \int_{\bbR^N} \frac{|f(x)-f(x+ae_j)|^{p_1}}{a^{1+p_1 \tilde s}} \rmd x \, \rmd a
\le
\int_{|a|<1}
   \int_{\bbR^N} \frac{|f(x)-f(x+ae_j)|^{p_1}}{a^{1+p_1  s}} \rmd x \, \rmd a
 \lesssim [f]_{W_s^{p_1}}^{p_1}.
\end{align*}
On the other hand, by Lemma~\ref{Lm:L3a}, $f\in L^{p_1}$.
Hence
\begin{align*}
\int_{|a|\ge 1}
   \int_{\bbR^N} \frac{|f(x)-f(x+ae_j)|^{p_1}}{a^{1+p_1 \tilde s}} \rmd x \, \rmd a
\le
\int_{|a|\ge 1}
    \frac{C_{p_1} \|f\|_{L^{p_1}}^{p_1}}{a^{1+p_1  \tilde s}}  \rmd a
 \lesssim \|f\|_{L^{p_1}}^{p_1}.
\end{align*}
This completes the proof.
\end{proof}

We point out that the inclusion in Corollary~\ref{Co:embedding:s}
is not true in general for the case $p_0>p_1$,
which is quite different from the classical case, since
for any $1\le p<\infty$ and $0<\tilde s< s<1$,
\[
    W_1^p\hookrightarrow
   W_s^p \hookrightarrow W_{\tilde s}^p .
\]
Below is a counterexample.

\begin{Example}\label{Ex:ex1}
Suppose that $0<s\le 1$, $1\le p_0, p_1 <\infty$ and $p_1\delta<N<p_0\delta$.
Let $f(x) =  (1+ |x|^2)^{-\delta/2}$, $x\in\bbR^N$
 and let $\vec p = (p_0,p_1)$.
We have
 $f\in W_s^{\vec p}(\bbR^N)$ if and only if
 $p_1(\delta+s)> N$.
Consequently, for any  $0<s\le 1$ and $p_0>p_1$,
$W_s^{\vec p} \not\subset W_s^{p_1}$.

Moreover, if   $0<\tilde s  < s\le 1$ and $\tilde s<N/p_1 - N/p_0$,
we have  $ W_s^{\vec p} \not\subset
 W_{\tilde s}^{\vec p}$.
\end{Example}

\begin{proof}
(i)\, We show that $f\in W_s^{\vec p}(\bbR^N)$ if and only if
  $p_1(\delta+s)> N$,
 for which we only need to consider the case $0<s<1$ since
 the other case $s=1$ is obvious.

First, we assume that   $p_1(\delta+s)> N$. Let us estimate the integral
\[
 \int_{\bbR^N}  |f(x)-f(x+ae_j)|^{p_1}  \rmd x,\qquad 1\le j\le N.
\]
Denote $\hat x_j = x-x_je_j$
and $\rmd \hat x_j = \prod_{i\ne j} \rmd x_i$. Observe that
\begin{align}
  |f(x) - f(x+ae_j)|^{p_1}
  = \Big|\int_0^a  \partial_{x_j} f(x+te_j)\rmd t\Big|^{p_1}
  \le |a|^{p_1-1} \Big|\int_0^a  |\partial_{x_j}f(x+te_j)|^{p_1}\rmd t\Big|.  \label{eq:s3:e1}
\end{align}
When $|a|>1$, $|t|\le |a|$ and $|x|>2|a|$, we have  $|x+te_j| >|a|$. Hence
\begin{align}
&  \int_{|x|>2|a| }
 |f(x) - f(x+ae_j)|^{p_1}\rmd x  \nonumber \\
&\lesssim  |a|^{p_1-1}\bigg|\int_0^a \int_{|x|>2|a|}
      \frac{|x_j+t|^{p_1}}
      {(1+|\hat x_j|^2 + |x_j+t|^2)^{(\delta+2)p_1/2}}\rmd x  \, \rmd t
       \bigg|
    \nonumber  \\
&\le   |a|^{p_1-1}\bigg|\int_0^a   \int_{|x|>|a|}
      \frac{|x_j |^{p_1}}
      {(1+|\hat x_j|^2+|x_j|^2  )^{(\delta+2)p_1/2}}
      \rmd x  \, \rmd t\bigg|
    \nonumber  \\
&=   |a|^{p_1}\bigg( \int_{|x_j|>|a|}
     \int_{\bbR^{N-1}}
      \frac{|x_j |^{p_1}}
      {(1+|\hat x_j|^2+|x_j|^2  )^{(\delta+2)p_1/2}}\rmd \hat x_j  \, \rmd x_j
    \nonumber  \\
&\qquad +   \int_{|x_j|<|a|}
     \int_{|\hat x_j|> (a^2 - x_j^2)^{1/2}}
      \frac{|x_j |^{p_1}}
      {(1+|\hat x_j|^2+|x_j|^2  )^{(\delta+2)p_1/2}}\rmd \hat x_j  \, \rmd x_j\bigg)
    \nonumber  \\
&\approx   |a|^{p_1} \bigg(\int_{|x_j|>|a|}
       \frac{|x_j |^{p_1}\rmd x_j}
      {(1+|x_j|  )^{(\delta+2)p_1-(N-1)}}
    + \int_{|x_j|<|a|}
    \frac{|x_j |^{p_1}\rmd x_j}
      {(1+|a|  )^{(\delta+2)p_1-(N-1)}} \bigg)
    \nonumber  \\
&\approx  |a|^{N-p_1\delta } ,
     \label{eq:s:8}
\end{align}
where we use the fact that $(\delta+1)p_1 > (\delta+s)p_1>N$.
On the other hand,
\begin{align}
&  \int_{ |x|\le 2|a| }   |f(x)-f(x+ae_j)|^{p_1} \rmd x
  \nonumber \\
&\lesssim \int_{|x|\le 2|a|} \Big(\frac{1}{(1+ |x|^2)^{p_1\delta/2}}
    +\frac{1}{(1+ |x +ae_j|^2)^{p_1\delta/2}} \Big) \rmd x
      \nonumber \\
&\le \int_{|x|\le 2|a|} \frac{1}{(1+ |x|^2)^{p_1\delta/2}}\rmd x
    +\int_{|x+ae_j|\le 3|a|} \frac{1}{(1+ |x +ae_j| ^2)^{p_1\delta/2}}  \rmd x
      \nonumber \\
&\lesssim |a|^{N-p_1\delta}.   \label{eq:s:9}
\end{align}
Putting (\ref{eq:s:8}) and  (\ref{eq:s:9}) together, we get
\[
  \int_{\bbR^N}  |f(x)-f(x+ae_j)|^{p_1}  \rmd x
  \lesssim |a|^{N- p_1\delta},\quad |a|>1.
\]

For $|a|\le 1$, since $|\partial_{x_j} f(x)|\lesssim
1/(1+|x|^2)^{(\delta+1)/2}$ and
$p_1(\delta+1) > p_1(\delta+s)>N$,
we have $\partial_{x_j} f \in L^{p_1}$.
It follows from  (\ref{eq:s3:e1}) that
\[
  \int_{\bbR^N}|f(x) - f(x+ae_j)|^{p_1}\rmd x
  \le |a|^{p_1} \|\partial_{x_j} f\|_{L^{p_1}}^{p_1}.
\]
Combining the preceding estimates  we   deduce
\begin{align*}
\int_{\bbR} \int_{\bbR^N} \frac{|f(x)-f(x+ae_j)|^{p_1}}{|a|^{1+p_1s}} \rmd x \,\rmd a
&=\int_{|a|\le 1}  \int_{\bbR^N} \frac{|f(x)-f(x+ae_j)|^{p_1}}{|a|^{1+p_1s}} \rmd x  \,\rmd a   \\
  & \qquad
   + \int_{|a|>1}  \int_{\bbR^N} \frac{|f(x)-f(x+ae_j)|^{p_1}}{|a|^{1+p_1s}} \rmd x  \,\rmd a  \\
&\lesssim \int_{|a|\le 1}  |a|^{p_1-1-p_1s} \rmd a +
   \int_{|a|> 1} \frac{\rmd a}{|a|^{p_1(\delta+s)-N+1}}  \\
&<\infty .
\end{align*}
Hence $f\in W_s^{\vec p}$.

Next we show that $f\not\in W_s^{\vec p}$ whenever $p_1(\delta+s)\le N$.

When $a>2$, $a^2 < |x|^2 < 5a^2/4$ and $x_j>0$, we have $1+ |x|^2 < 3a^2/2$
and $ 1 +  | x+ae_j|^2 = 1+ |x|^2 + 2ax_j + a^2 > 2a^2$.
Hence
\begin{align*}
\int_{\bbR^N} |f(x) - f(x+ae_j)|^{p_1} \rmd x
& \ge \int_{\substack{a^2 < |x|^2 < 5a^2/4 \\ x_j>0}} |f(x) - f(x+ae_j)|^{p_1} \rmd x
  \\
& \gtrsim \int_{\substack{a^2 < |x|^2 < 5a^2/4 \\ x_j>0}}   \frac{\rmd x }{a^{p_1\delta}} \\
&\approx a^{N-p_1 \delta}.
\end{align*}
Therefore,
\begin{align*}
\int_{\bbR}  \int_{\bbR^N} \frac{|f(x)-f(x+ae_j)|^{p_1}}{|a|^{1+p_1s}} \rmd x\,\rmd a
&\gtrsim \int_{a\ge 2} \frac{ \rmd a }{a^{p_1(\delta+s)-N+1}}
=\infty .
\end{align*}

\medskip\noindent
(ii)\, Now assume that $p_0>p_1$. Take some $\varepsilon>0$ such that
$\varepsilon< \min\{s,  N/p_1-N/p_0\}$. Set $\delta = N/p_1 - \varepsilon$.
We have $  p_1\delta<N<p_0\delta$
 and $p_1(\delta+s) = N + p_1(s-\varepsilon) >N$.
Hence $f\in W_{s}^{\vec p} \setminus W_s^{p_1}$.

\medskip\noindent
(iii)\,
When
$\tilde s<N/p_1 - N/p_0$,
we have $p_0>p_1$.
Take some constant $\delta$ such that
\[
  \max\Big\{\frac{N}{p_0}, \frac{N}{p_1} -s  \Big\} < \delta < \frac{N}{p_1} -\tilde s.
\]
Then $ p_0\delta>N>p_1\delta$, $p_1(\delta+s)>N$   and $p_1(\delta+\tilde s)<N$.
Hence $f\in W_s^{\vec p} \setminus W_{\tilde s}^{\vec p}$.
\end{proof}

\begin{proof}[Proof of Theorem~\ref{thm:embedding:fractional}]
For the case $p_0\!\le p_1$, the conclusion follows from
Lemma~\ref{Lm:L3a} and the embedding theorem for
classical fractional Sobolev spaces.

Now we assume that $p_0>p_1$.
It suffices to show that if $f\in W_s^{\vec p}$,
then
$f\in L^q$ with $q = Np_1/(N-sp_1)$.

We adopt the notation used in the proof of Lemma~\ref{Lm:L3a}.
We rewrite (\ref{eq:s:e17}) as
\begin{align}
g &=    ((\nabla_x \Phi) E) * \nabla_x g  +(\Phi \nabla_x  E) * \nabla_x g
        + ((\partial_t \Phi) E) * h_0 + ( \Phi \partial_t  E) * h_0
   \nonumber \\
&\qquad
        + ((\partial_t \Phi) E) * h_1
        + (\Phi \partial_t  E ) * h_1
      -2 (\nabla \Phi \cdot \nabla E) * g
        - ((\Delta\Phi)  E) *g
          .\label{eq:s:e17a}
\end{align}
Checking the proof of
\cite[Proposition 4.47]{Demengel2012}, we find
that
\begin{equation}\label{eq:s:20}
  \Big\|\Big( ((\nabla_x \Phi) E) * \nabla_x g  +(\Phi \nabla_x  E) * \nabla_x g
        +( (\partial_t \Phi) E )* h_0 +  (\Phi \partial_t  E) * h_0\Big)(0,\cdot)\Big\|_{L^q}
        \lesssim \|f\|_{W_s^{\vec p}}.
\end{equation}
In fact,  the only property of
$g$ used in the proof  of
\cite[Proposition 4.47]{Demengel2012}
is that $ t^{\alpha}\nabla g(t,x)$ lies in $L^{p_1}$.
In our case, $\partial_t g$ is replaced by $h_0$.
By Lemma~\ref{Lm:Ls1}, $t^{\alpha}h_0(t,x)\in L^{p_1}$.
So (\ref{eq:s:20}) is true.

It remains to consider the last four terms in
(\ref{eq:s:e17a}).
Since $\Phi$ is compactly supported, we have
\[
  |(\partial_t \Phi) E(t,x)|
  \lesssim \frac{1}{(|t|+|x|)^N},
\]
and
\[
 | \Phi \partial_t  E (t,x)| \lesssim \frac{1}{(|t|+|x|)^N}.
\]
Observe that
$h_1(t,x)=0$ for $t<1$ or $t>A$. We have
\begin{align*}
\Big|\Big(((\partial_t \Phi) E )* h_1
        + (\Phi \partial_t  E )* h_1\Big)(0,x)\Big|
\lesssim  \int_1^A  \int_{[-A,A]^N}
           \frac{|h_1(t,x-y)|}{(t+|y|)^N} \rmd y  \, \rmd t.
\end{align*}
Take some $r>1$ such that $1/p_0 + 1/r = 1/q +1$.
Applying Young's inequality, we deduce from (\ref{eq:s:11c})
that
\[
  \Big\|\Big(((\partial_t \Phi) E) * h_1
        + (\Phi \partial_t  E) * h_1\Big)(0,\cdot)\Big\|_{L^q}
  \lesssim \int_1^A
           \|h_1(t,\cdot)\|_{L^{p_0}} \Big\|\frac{1}{(t+|\cdot|)^N}\Big\|_{L^r} \rmd t
  \lesssim \|f\|_{L^{p_0}}.
\]

For the last two terms in (\ref{eq:s:e17a}),
we show in (S3) of the proof of  Lemma~\ref{Lm:L3a}
that
\[
  \Big\|\Big( |(\nabla \Phi \cdot \nabla E) *g|
        +| ((\Delta\Phi) E )*g|
        \Big)(0,\cdot)\Big\|_{L^q}
  \lesssim \|f\|_{L^{p_0}}.
\]
Combining these facts we derive the desired conclusion.
\end{proof}

\subsection{Density of Compactly Supported Infinitely Differentiable Functions}

In this subsection, we show that compactly supported
infinitely differentiable functions are dense
in fractional nonuniform Sobolev spaces for certain indices.

First, we show that  smooth functions are
dense in nonuniform fractional Sobolev spaces.

\begin{Lemma}\label{Lm:frac:density}
For any $0<s<1$ and $\vec p = (p_0,p_1)$ with
$1\le p_0,p_1<\infty$,
$C^{\infty}\cap W_s^{\vec p}(\bbR^{N})$
is dense in $W_s^{\vec p}(\bbR^{N})$.
\end{Lemma}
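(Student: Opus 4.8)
The plan is to prove this by mollification, exactly in the spirit of Lemma~\ref{Lm:density}, the only genuinely new point being the control of the Gagliardo seminorm under mollification. Fix a nonnegative $\varphi\in C_c^{\infty}(\bbR^N)$ with $\|\varphi\|_{L^1}=1$, and for $f\in W_s^{\vec p}$ set
\[
  g_{\lambda}(x)=\int_{\bbR^N} f(x-\lambda y)\,\varphi(y)\,\rmd y,\qquad \lambda>0 .
\]
Since $f\in L^{p_0}\subset L^1_{\mathrm{loc}}$ we have $g_{\lambda}\in C^{\infty}$, and by Young's inequality $\|g_{\lambda}\|_{L^{p_0}}\le\|f\|_{L^{p_0}}$. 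It therefore suffices to prove $\|g_{\lambda}-f\|_{L^{p_0}}\to 0$ and $[g_{\lambda}-f]_{W_s^{p_1}}\to 0$ as $\lambda\to 0$; the first is the classical fact that mollification converges in $L^{p_0}$ for $p_0<\infty$, so the whole matter reduces to the seminorm term (this will also confirm $g_{\lambda}\in C^{\infty}\cap W_s^{\vec p}$, though that follows independently from the contraction estimate below).

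Since $\int\varphi=1$, I would write $g_{\lambda}-f=\int_{\bbR^N}(\tau_{\lambda y}f-f)\,\varphi(y)\,\rmd y$, where $\tau_a f:=f(\cdot-a)$. The Gagliardo seminorm is of the form $h\mapsto\|Th\|_{L^{p_1}(\bbR^{2N})}$ for the linear map $Th(x,y)=(h(x)-h(y))\,|x-y|^{-(N+sp_1)/p_1}$, so Minkowski's integral inequality yields
\[
  [g_{\lambda}-f]_{W_s^{p_1}}\le\int_{\bbR^N}[\tau_{\lambda y}f-f]_{W_s^{p_1}}\,\varphi(y)\,\rmd y .
\]
By translation invariance of the seminorm, $[\tau_{\lambda y}f-f]_{W_s^{p_1}}\le 2[f]_{W_s^{p_1}}<\infty$, and $\varphi$ has compact support, so the dominated convergence theorem reduces everything to the continuity of translations in the Gagliardo seminorm, i.e.\ to proving $[\tau_h f-f]_{W_s^{p_1}}\to 0$ as $h\to 0$. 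The same estimate with $\lambda y$ replaced by $0$ gives $[g_{\lambda}]_{W_s^{p_1}}\le[f]_{W_s^{p_1}}$.

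The key observation, and the only mildly delicate point since smoothness-density is not yet available to preprocess $f$, is that translation in the seminorm is literally translation in an honest $L^{p_1}$ space on $\bbR^{2N}$. Writing $F:=Tf$, so that $F\in L^{p_1}(\bbR^{2N})$ with $\|F\|_{L^{p_1}}=[f]_{W_s^{p_1}}$, and using $|x-y|=|(x+h)-(y+h)|$, one checks $T(\tau_h f-f)=\tau_{(h,h)}F-F$, hence
\[
  [\tau_h f-f]_{W_s^{p_1}}=\|\tau_{(h,h)}F-F\|_{L^{p_1}(\bbR^{2N})},
\]
where $\tau_{(h,h)}$ is translation by $(h,h)$ on $\bbR^{2N}$. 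As $p_1<\infty$, translations act continuously on $L^{p_1}(\bbR^{2N})$, so the right-hand side tends to $0$ with $h$; feeding this into the previous display finishes the proof. I expect no real obstacle beyond correctly setting up this $L^{p_1}(\bbR^{2N})$ reformulation — the nonuniformity $p_0\neq p_1$ is harmless here because mollification handles the $L^{p_0}$ and the seminorm parts completely independently.
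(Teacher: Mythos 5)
Your proof is correct and follows essentially the same route as the paper's: mollify, apply Minkowski's inequality to pass the seminorm inside the $y$-integral, recognize that the Gagliardo seminorm of $\tau_{\lambda y}f - f$ is the $L^{p_1}(\bbR^{2N})$ norm of $\tau_{(\lambda y,\lambda y)}F - F$ where $F(x,z) = (f(x)-f(z))|x-z|^{-N/p_1-s}$, and invoke continuity of translations on $L^{p_1}(\bbR^{2N})$ together with dominated convergence. The paper carries out the same diagonal-translation reformulation, only slightly less explicitly (without naming the map $T$ or isolating the dominated-convergence step), so there is no substantive difference.
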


\begin{proof}
As in the proof of Lemma~\ref{Lm:density},
take some
$\varphi\in C_c^{\infty}$ such that
$\varphi$ is nonnegative and $\|\varphi\|_{L^1}=1$.
Fix some $f\in  W_s^{\vec p}$ and set
\[
  g_{\lambda} (x) = \int_{\bbR^N} f(y) \frac{1}{\lambda^N}\varphi(\frac{x-y}{\lambda})
    \rmd y, \qquad \lambda>0.
\]
We have $g_{\lambda}\in C^{\infty}$.
Moreover, since
\[
  g_{\lambda} (x) = \int_{\bbR^N} f(x-\lambda y) \varphi(y)
    \rmd y,
\]
we have
\[
  g_{\lambda} (x) -f(x)= \int_{\bbR^N} (f(x-\lambda y)-f(x)) \varphi(y)
    \rmd y.
\]
Now
we derive from Minkowski's inequality
and the continuity of translation operators in
Lebesgue spaces
 that
\begin{equation}\label{eq:a:e3}
  \lim_{\lambda\rightarrow 0}
     \|g_{\lambda} - f \|_{L^{p_0}}
 \le
  \lim_{\lambda\rightarrow 0}
      \int_{\bbR^N} \| f(\cdot -\lambda y) -f\|_{L^{p_0}}\varphi(y)
    \rmd y
     =0
\end{equation}
and
\begin{align*}
 & \lim_{\lambda\rightarrow 0}
   [g_{\lambda} -f]_{W_s^{p_1}}\\
&\le \lim_{\lambda\rightarrow 0}\int_{\bbR^N}    \bigg \| \frac{\big(f(x-\lambda y) - f(x)\big) - \big(f(z-\lambda y) - f(z)\big)}
    {|x-z|^{N/p_1+s}}
    \bigg\|_{L_{(x,z)}^{p_1}(\bbR^N\times\bbR^N)}
    \varphi(y)
    \rmd y  \\
&= \lim_{\lambda\rightarrow 0}\int_{\bbR^N}    \bigg \| \frac{f(x-\lambda y) - f(z-\lambda y)}
    {|(x-\lambda y)-(z-\lambda y)|^{N/p_1+s}}
-\frac{f(x) - f(z)}
    {|x -z|^{N/p_1+s}}
    \bigg\|_{L_{(x,z)}^{p_1}(\bbR^N\times\bbR^N)}
    \varphi(y)
    \rmd y  \\
&=0,
\end{align*}
having applied the fact that
$(f(x) - f(z))/
    |x -z|^{N/p_1+s} \in L_{(x,z)}^{p_1}$.
Hence $\lim_{\lambda\rightarrow 0}\|g_{\lambda} - f\|_{W_s^{\vec p}}=0$.
\end{proof}

As in the classical case,
to prove the density of compactly supported smooth functions,
we first approximate
a function in fractional nonuniform Sobolev spaces
by its  truncation,
then approximate the truncation by  its regularization.
However, since $p_0$ needs not to be identical to $p_1$,
the technique details are quite different.

\begin{Theorem}\label{dense:frac}
Suppose that $0<s<1$, $\vec p = (p_0,p_1)\in [1,\infty)^2$ and $s/N \ge 1/p_1 - 1/p_0$.
Then $C_c^{\infty}(\bbR^N)$ is dense in
$W_s^{\vec p}(\bbR^N)$.
\end{Theorem}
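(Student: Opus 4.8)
The plan is to adapt the classical ``truncate, then regularize'' scheme; the new feature is a purely non-local ``mixed'' term that has no analogue in the first-order case. Fix $f\in W_s^{\vec p}(\bbR^N)$ and $\varepsilon>0$, choose $\psi\in C_c^{\infty}(\bbR^N)$ with $0\le\psi\le 1$, $\psi\equiv 1$ near $0$ and $\supp\psi\subseteq B(0,2)$, and set $\eta_n=1-\psi(\cdot/n)$, $f_n=\psi(\cdot/n)f$, so that $f-f_n=f\eta_n$. I would prove that $\|f-f_n\|_{W_s^{\vec p}}\to 0$; granting this, for $n$ large $f_n$ is a compactly supported element of $W_s^{\vec p}$ within $\varepsilon/2$ of $f$, and applying Lemma~\ref{Lm:frac:density} to $f_n$ (its mollifications by a compactly supported mollifier are compactly supported and smooth, hence lie in $C_c^{\infty}$) produces an element of $C_c^{\infty}(\bbR^N)$ within $\varepsilon$ of $f$.

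The convergence $\|f\eta_n\|_{L^{p_0}}\to 0$ is immediate from dominated convergence ($f\in L^{p_0}$, $|\eta_n|\le 1$, $\eta_n\to 0$ pointwise). For the Gagliardo part I would use the splitting
\[
 f(x)\eta_n(x)-f(y)\eta_n(y)=\big(f(x)-f(y)\big)\eta_n(x)+f(y)\big(\eta_n(x)-\eta_n(y)\big),
\]
which by convexity of $t\mapsto t^{p_1}$ bounds $[f\eta_n]_{W_s^{p_1}}^{p_1}$ by a constant times $\mathrm{I}_n+\mathrm{II}_n$, where
\[
 \mathrm{I}_n=\iint \frac{|f(x)-f(y)|^{p_1}|\eta_n(x)|^{p_1}}{|x-y|^{N+sp_1}}\rmd x\,\rmd y,
 \qquad
 \mathrm{II}_n=\iint \frac{|f(y)|^{p_1}|\eta_n(x)-\eta_n(y)|^{p_1}}{|x-y|^{N+sp_1}}\rmd x\,\rmd y.
\]
Here $\mathrm{I}_n\to 0$ by dominated convergence, its integrand being dominated by $|f(x)-f(y)|^{p_1}|x-y|^{-N-sp_1}\in L^1(\bbR^N\times\bbR^N)$ (since $f\in W_s^{\vec p}$) and vanishing pointwise as $n\to\infty$. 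All the real work is in $\mathrm{II}_n$.

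For $\mathrm{II}_n$ I would first record a kernel bound for $J_n(y):=\int |\eta_n(x)-\eta_n(y)|^{p_1}|x-y|^{-N-sp_1}\rmd x$. Since $\eta_n$ is bounded by $1$ and is $O(1/n)$-Lipschitz, splitting the $x$-integral at $|x-y|\sim n$ gives $J_n(y)\lesssim n^{-sp_1}$ uniformly in $y$, while for $|y|\ge 4n$, where $\eta_n(y)=1$ and $\eta_n-1$ is supported in $B(0,2n)$, one gets $J_n(y)\lesssim n^N|y|^{-N-sp_1}$; together these yield $J_n(y)\lesssim n^N(n+|y|)^{-N-sp_1}$. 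If $p_0\le p_1$, then $f\in L^{p_1}$ by Lemma~\ref{Lm:L3a}, so $\mathrm{II}_n\le \sup_y J_n(y)\,\|f\|_{L^{p_1}}^{p_1}\lesssim n^{-sp_1}\|f\|_{L^{p_1}}^{p_1}\to 0$. If $p_0>p_1$, then $f$ need not lie in $L^{p_1}$ (Example~\ref{Ex:ex1}): I would write $\mathrm{II}_n\lesssim n^N\int |f(y)|^{p_1}(n+|y|)^{-N-sp_1}\rmd y$ and split the integral over $B(0,R_0)$ and its complement; the first part is $\lesssim n^{-sp_1}\|f\|_{L^{p_1}(B(0,R_0))}^{p_1}\to 0$ for each fixed $R_0$ (note $f\in L^{p_1}_{\mathrm{loc}}$ since $p_0\ge p_1$), while H\"older with exponents $p_0/p_1$ and $p_0/(p_0-p_1)$ bounds the second part by $C\,n^{\,N-Np_1/p_0-sp_1}\|f\|_{L^{p_0}(\{|y|\ge R_0\})}^{p_1}$, and the $n$-exponent is $\le 0$ precisely because $s/N\ge 1/p_1-1/p_0$. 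Letting $n\to\infty$ and then $R_0\to\infty$ gives $\mathrm{II}_n\to 0$.

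The main obstacle is exactly this estimate of $\mathrm{II}_n$ for $p_0>p_1$, and within it the borderline case $s/N=1/p_1-1/p_0$, i.e.\ $p_0=Np_1/(N-sp_1)$: there the $n$-exponent above is $0$, so bounding $\|f\|_{L^{p_0}(\{|y|\ge R_0\})}$ by the full norm $\|f\|_{L^{p_0}}$ only gives $\mathrm{II}_n=O(1)$, not $o(1)$, and one must genuinely use that the $L^{p_0}$-mass of $f$ on $\{|y|\ge R_0\}$ tends to $0$ as $R_0\to\infty$ — that is, the $R_0$-splitting rather than a global bound is essential. I expect the careful bookkeeping in this endpoint range, and checking that every implied constant is independent of $n$, to be the only delicate point; the rest is dominated convergence and a one-variable H\"older inequality.
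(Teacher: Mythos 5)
Your proposal is correct and follows essentially the same route as the paper: the same decomposition of the Gagliardo difference into a ``diagonal'' term $\big(f(x)-f(y)\big)\eta_n(x)$ handled by dominated convergence and a ``mixed'' term $f(y)\big(\eta_n(x)-\eta_n(y)\big)$ handled by a Lipschitz/decay estimate on $\eta_n$, H\"older with exponents $p_0/p_1$ and $p_0/(p_0-p_1)$, and the vanishing of the $L^{p_0}$-mass of $f$ at infinity to settle the endpoint $s/N=1/p_1-1/p_0$. The only difference is bookkeeping: you first extract the uniform kernel bound $J_n(y)\lesssim n^N(n+|y|)^{-N-sp_1}$ and split the remaining $y$-integral once at a fixed radius $R_0$, whereas the paper splits at $|y|=4n$ and then again at an $n_0$ chosen so the $L^{p_0}$-tail is small, but the substance is identical.
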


\begin{proof}
Take some nonnegative function
$\varphi\in C_c^{\infty}$ such that
$\varphi(x)=1$ when $|x|<1$
and $\varphi(x)=0$ when $|x|>2$.

Fix some $f\in W_s^{\vec p}$.
First, we show that $\varphi(\cdot/n)f$ tends to $f$ in $W_s^{\vec p} $
as $n$ tends to the infinity.
Since $\varphi(\cdot/n)f$ tends to $f$ in $L^{p_0}$, thanks to Lebesgue's dominated convergence theorem, it suffices to show that
$\lim_{n\rightarrow\infty} [\varphi(\cdot/n)f-f]_{W_s^{p_1}}=0$.

If $p_0\le p_1$, by Lemma~\ref{Lm:L3a}, we have
$W_s^{\vec p}\hookrightarrow W_s^{p_1}$.
Now we see from the density result in
the classical fractional Sobolev spaces that
$\lim_{n\rightarrow\infty} [\varphi(\cdot/n)f-f]_{W_s^{p_1}}=0$.

It remains to consider the case $p_0>p_1$.
Observe that
\begin{align*}
\Big(\varphi\big(\frac{x}{n}\big)-1\Big) f(x)
-\Big(\varphi\big(\frac{y}{n}\big)-1\Big) f(y)
= \Big(\varphi\big(\frac{x}{n}\big)-1\Big) (f(x)-f(y))
   + \Big(\varphi\big(\frac{x}{n}\big)-\varphi\big(\frac{y}{n}\big)\Big) f(y).
\end{align*}
We have
\begin{align*}
[\varphi\big(\frac{\cdot}{n}\big)f -f]_{W_s^{p_1}}^{p_1}
&\lesssim
  \iint_{\bbR^{2N}} \frac{|(\varphi(x/n)-1)(f(x)-f(y))|^{p_1}}
    {|x-y|^{N+sp_1}} \rmd x\,\rmd y
  \\
&\qquad
  +
  \iint_{\bbR^{2N}} \frac{|(\varphi(x/n)-\varphi(y/n))f(y)|^{p_1}}
    {|x-y|^{N+sp_1}} \rmd x\,\rmd y.
\end{align*}
Since $f\in W_s^{\vec p}$, we see from Lebesgue's dominated convergence theorem
that
\[
  \lim_{n\rightarrow\infty }
  \iint_{\bbR^{2N}} \frac{|(\varphi(x/n)-1)(f(x)-f(y))|^{p_1}}
    {|x-y|^{N+sp_1}} \rmd x\,\rmd y  = 0.
\]
Now we only need to show that
\begin{align}\label{eq:s32:e1}
 \lim_{n\rightarrow\infty }
 \iint_{\bbR^{2N}} \frac{|(\varphi(x/n)-\varphi(y/n))f(y)|^{p_1}}
    {|x-y|^{N+sp_1}} \rmd x\,\rmd y=0.
\end{align}
We split the integral into two parts,
\begin{align*}
 & \iint_{\bbR^{2N}} \frac{|(\varphi(x/n)-\varphi(y/n))f(y)|^{p_1}}
    {|x-y|^{N+sp_1}} \rmd x\,\rmd y \\
 &=\Big(\iint_{|y|\le 4n} +  \iint_{|y|> 4n}  \Big)  \frac{|(\varphi(x/n)-\varphi(y/n))f(y)|^{p_1}}
    {|x-y|^{N+sp_1}} \rmd x\,\rmd y \\
&= I+II.
\end{align*}
We have
\begin{align*}
I&= \Big(\iint_{\substack{|y|\le 4n \\ |x-y|\le 8n}} +
  \iint_{\substack{|y|\le 4n \\ |x-y|> 8n}}\Big)  \frac{|(\varphi(x/n)-\varphi(y/n))f(y)|^{p_1}}
    {|x-y|^{N+sp_1}} \rmd x\,\rmd y  \\
&=I_1 + I_2,
\end{align*}
where
\begin{align*}
I_1
&\le \frac{\|\nabla\varphi\|_{L^{\infty}}^{p_1}}
  {n^{p_1}}\iint_{\substack{|y|\le 4n \\ |x-y|\le 8n}}  \frac{|x-y|^{p_1}  | f(y)|^{p_1}}
    {|x-y|^{N+sp_1}} \rmd x\,\rmd y
\lesssim \frac{1}{n^{sp_1}} \int_{|y|\le 4n}  |f(y)|^{p_1}\rmd y,
\end{align*}
and
\begin{align*}
I_2
&= \iint_{\substack{|y|\le 4n \\ |x-y|> 8n}}  \frac{ |\varphi(y/n) f(y)|^{p_1}}
    {|x-y|^{N+sp_1}} \rmd x\,\rmd y
 \lesssim \frac{1}{n^{sp_1}} \int_{|y|\le 4n}  |f(y)|^{p_1}\rmd y.
\end{align*}
Hence
\[
  I \lesssim  \frac{1}{n^{sp_1}} \int_{|y|\le 4n}  |f(y)|^{p_1}\rmd y.
\]

Denote $r = p_0/p_1$. For any $k\ge 1$, there is some $n_0$ such that
\[
  \| f \cdot 1_{\{|y|\le n_0\}}\|_{L^{p_0}} \ge \Big(1 -\frac{1}{k}\Big) \|f\|_{L^{p_0}}.
\]
Applying H\"older's inequality, we get
\begin{align*}
  I &\lesssim  \frac{1}{n^{sp_1}}\Big( \int_{|y|\le n_0}  |f(y)|^{p_1}\rmd y
      +  \int_{n_0<|y|\le 4n}  |f(y)|^{p_1}\rmd y \Big)\\
    &\lesssim \frac{1}{n^{sp_1}}\Big( n_0^{N/r'} \| |f|^{p_1}\|_{L^{r}}
      + n^{N/r'} \| |f|^{p_1} \cdot 1_{\{|y|>n_0\}}\|_{L^{r}}  \Big) \\
 &\le  \frac{n_0^{N/r'}}{n^{sp_1}}  \| f\|_{L^{p_0}}^{p_1}
    + \frac{1}{k^{p_1}} \cdot \frac{1}{n^{Np_1(s/N-1/p_1+1/p_0)}}\| f\|_{L^{p_0}}^{p_1}.
\end{align*}
Since $s/N\ge 1/p_1 - 1/p_0$, letting $n\rightarrow\infty$ and $k\rightarrow\infty$ successively,
we get
\[
  \lim_{n\rightarrow \infty} I = 0.
\]

Next we deal with $II$. Recall that $\varphi(x)=0$ when $|x|>2$.
We have
\begin{align*}
II&= \iint_{|y|> 4n}   \frac{|(\varphi(x/n)-\varphi(y/n))f(y)|^{p_1}}
    {|x-y|^{N+sp_1}} \rmd x\rmd y \\
&= \int_{|x|\le 2n} \int_{|y|> 4n}   \frac{|\varphi(x/n)f(y)|^{p_1}}
    {|x-y|^{N+sp_1}} \rmd x\rmd y \\
&\lesssim  \int_{|x|\le 2n} |\varphi(\frac{x}{n})|^{p_1}
  \rmd x \int_{|y|> 4n}   \frac{|f(y)|^{p_1}}
    {|y|^{N+sp_1}} \rmd y \\
&\lesssim n^{N}\cdot \|f\cdot 1_{\{|y|\ge 4n\}} \|_{L^{p_0}}^{p_1}
  \cdot \frac{1}{n^{sp_1+N/r}}   \\
&=   \frac{1}{n^{Np_1(s/N-1/p_1+1/p_0)}}  \|f\cdot 1_{\{|y|\ge 4n\}} \|_{L^{p_0}}^{p_1}.
\end{align*}
Hence $\lim_{n\rightarrow \infty} II = 0$.
Therefore, (\ref{eq:s32:e1}) is true.

The above arguments show that the sequence $
\{\varphi(\cdot/n)f\:\, n\ge 1\}$ is a subset of $W_s^{\vec p}$
and is convergent to $f$ in $W_s^{\vec p}$.

To finish the proof, it suffices to show that for any compactly supported function
$f\in W_s^{\vec p}$, $f$ can be approximated by functions in $C_c^{\infty}$,
which can be achieved with the same arguments as in the proof of Lemma~\ref{Lm:frac:density}.
\end{proof}

\section{Applications}
 \label{sec:Examples}

\subsection{Local estimates for solutions of  heat equations}

Consider the classical solution of the heat equation
\[
  \begin{cases}
\partial_t u(t,x) - \Delta_x u(t,x) = 0,& \quad t>0, \\
u(0,x) = u_0(x).&
\end{cases}
\]
That is,
\begin{equation}\label{eq:u:heat}
  u(t,x) =
     \frac{1}{(4\pi t)^{N/2}} \int_{\bbR^N}
        e^{-|x-y|^2/(4t)} u_0(y) \rmd y, \qquad t>0, x\in\bbR^N.
\end{equation}
Fefferman,  McCormick, Robinson, and Rodrigo \cite{FeffermanMcCormickRobinsonRodrigo2017}
studied local energy estimates for $u(t,x)$.
They proved that if the initial data
$u_0$ belongs to the Sobolev space
$H^s$, then for any $T>0$,
the classical solution of the heat equation
satisfies that
$u
\in L^{\infty}(0,T; H^s) \cap L^2(0,T; H^{s+1})$
and $t^{1/2}u(t,x)\in  L^2(0,T; H^{s+2})$.
As a result, $u\in
L^q(0,T; H^{s+2})$
for any $0<q<1$.

In this subsection, we show that the local estimates for
solutions of   heat equations
are also valid when the initial data belong   to
nonuniform Sobolev spaces.

\begin{Theorem}\label{thm:HeatEquation}
Suppose that  $s>0$, $T>0$ and
$\vec p = (p_0, \ldots, p_{\lceil s\rceil})$
with $1< p_i<\infty$ for $0\le i\le \lceil s\rceil$.
Set $\vec r = (p_0,\ldots, p_{\lfloor s\rfloor},2 )$
for $s\not\in \bbZ$
and $\vec r = \vec p$ for $s\in\bbZ$.
Let $u$ be the classical solution of the heat equation
with initial data $u_0\in W_s^{\vec p}(\bbR^N)$.
Then $u\in L^{\infty}(0,T; W_s^{\vec p}(\bbR^N))$
with
\begin{equation}\label{eq:u:e0a}
\sup_{0\le t\le T} \|u(t,\cdot)\|_{W_s^{\vec p}}
  \le  C_{s,\vec p,N}\|u_0\|_{W_s^{\vec p}}.
\end{equation}
Moreover, if $1<p_{\lfloor s\rfloor},
p_{\lceil s\rceil}\le 2$, then
\begin{align}
\int_0^T t^{\varrho}\|u(t,\cdot)\|_{W_{s+1}^{(\vec r, 2)}}^2 \rmd t
  &\le C_{s,\vec p,N} (1+T^{\theta_1})\|u_0\|_{W_s^{\vec p}}^2,
  \label{eq:u:e0b}
\\
  \int_0^T t^{1+\varrho}\|u(t,\cdot)\|_{W_{s+2}^{(\vec r, 2, 2)}}^2 \rmd t
  &\le C_{s,\vec p,N} (1+T^{\theta_2})\|u_0\|_{W_s^{\vec p}}^2,\label{eq:u:e0c}
\end{align}
 where
\begin{align*}
  \varrho = (2-p_s)\sigma,\quad
  p_s  = \min\{p_{\lfloor s\rfloor},  p_{\lceil s\rceil}
    \},
    \quad
 \sigma = \frac{N}{2p_{\lfloor s\rfloor}}+\frac{1}{2},
\end{align*}
and $\theta_1, \theta_2$ are constants.
 Consequently, for any $0<q<2/(2+\varrho)$,
$u \in L^q(0,T; W_{s+2}^{(\vec r,2,2)}(\bbR^N))$
 and
\begin{equation}\label{eq:u:q}
  \int_0^T \|u(t,\cdot)\|_{W_{s+2}^{(\vec r, 2,2)}}^q
  \rmd t
  \le C_{s,\vec p, N} T^{1-(1+\varrho/2)q}  (1+T^{\theta_2})^{q/2}
    \|u_0\|_{W_s^{\vec p}}^q.
\end{equation}
\end{Theorem}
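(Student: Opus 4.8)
The plan is to write $u(t,\cdot)=e^{t\Delta}u_0=K_t*u_0$ with the Gauss--Weierstrass kernel $K_t(x)=(4\pi t)^{-N/2}e^{-|x|^2/(4t)}$, and to use throughout that $e^{t\Delta}$ commutes with $D^{\alpha}$, that $\|K_t\|_{L^1}=1$, and that $|D^{\gamma}K_t(z)|\lesssim t^{-(N+|\gamma|)/2}e^{-|z|^2/(8t)}$. For $(\ref{eq:u:e0a})$: when $|\alpha|\le\lfloor s\rfloor$, Young's inequality gives $\|D^{\alpha}u(t,\cdot)\|_{L^{p_{|\alpha|}}}=\|K_t*D^{\alpha}u_0\|_{L^{p_{|\alpha|}}}\le\|D^{\alpha}u_0\|_{L^{p_{|\alpha|}}}$, and when $|\alpha|=\lfloor s\rfloor$, Minkowski's integral inequality applied to $(K_t*D^{\alpha}u_0)(x)-(K_t*D^{\alpha}u_0)(y)=\int K_t(z)\bigl(D^{\alpha}u_0(x-z)-D^{\alpha}u_0(y-z)\bigr)\rmd z$ gives $[D^{\alpha}u(t,\cdot)]_{W_{\nu_s}^{p_{\lceil s\rceil}}}\le[D^{\alpha}u_0]_{W_{\nu_s}^{p_{\lceil s\rceil}}}$; summing yields $(\ref{eq:u:e0a})$ with constant $1$. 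Since $\sigma>0$ and $p_s\le2$ force $\varrho\ge0$, the weights $t^{\varrho}$ and $t^{1+\varrho}$ are integrable on $[0,T]$, so every term in $(\ref{eq:u:e0b})$--$(\ref{eq:u:e0c})$ coming from a derivative of order at most $\lfloor s\rfloor$ is dominated by $\bigl(\int_0^T t^{\varrho}\rmd t\bigr)\sup_{t}\|u(t,\cdot)\|_{W_s^{\vec p}}^2\lesssim T^{1+\varrho}\|u_0\|_{W_s^{\vec p}}^2$, which already fits the right-hand sides; the substance is the top-order pieces.

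Write those as $D^{\alpha}u(t,\cdot)=D^{\alpha'}e^{t\Delta}v$ with $v=D^{\beta}u_0$, $|\beta|=\lfloor s\rfloor$, $j:=|\alpha'|\in\{1,2\}$, so $v\in W_{\nu_s}^{(p_{\lfloor s\rfloor},p_{\lceil s\rceil})}$ with norm at most $\|u_0\|_{W_s^{\vec p}}$; in particular $v\in L^{p_{\lfloor s\rfloor}}$ and, when $p_{\lceil s\rceil}=2$, $[v]_{W_{\nu_s}^2}\approx\|v\|_{\dot H^{\nu_s}}$ is Fourier-friendly. For the $L^2$-type quantities I would integrate in $t$ by parts using the energy identities $\tfrac{\rmd}{\rmd t}\|e^{t\Delta}v\|_{L^2}^2=-2\|\nabla e^{t\Delta}v\|_{L^2}^2$ and $\tfrac{\rmd}{\rmd t}\|\nabla e^{t\Delta}v\|_{L^2}^2=-2\|\Delta e^{t\Delta}v\|_{L^2}^2$: this reduces $\int_0^T t^{1+\varrho}\|\nabla^2 e^{t\Delta}v\|_{L^2}^2\rmd t$ to $\int_0^T t^{\varrho}\|\nabla e^{t\Delta}v\|_{L^2}^2\rmd t$ and the latter to $\int_0^T t^{\varrho-1}\|e^{t\Delta}v\|_{L^2}^2\rmd t$, the boundary terms being controlled by the Young smoothing bound $\|\nabla^k e^{t\Delta}v\|_{L^2}\lesssim t^{-k/2-\frac N2(\frac1{p_{\lfloor s\rfloor}}-\frac12)}\|v\|_{L^{p_{\lfloor s\rfloor}}}$. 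Plugging $\|e^{t\Delta}v\|_{L^2}\lesssim t^{-\frac N2(\frac1{p_{\lfloor s\rfloor}}-\frac12)}\|v\|_{L^{p_{\lfloor s\rfloor}}}$ into the last integral, one checks it converges precisely because $\varrho\ge N(\tfrac1{p_{\lfloor s\rfloor}}-\tfrac12)$ — this is where the choice $\varrho=(2-p_s)\sigma$ with $\sigma=\tfrac N{2p_{\lfloor s\rfloor}}+\tfrac12$ enters, the surplus being $\tfrac{2-p_{\lfloor s\rfloor}}2\ge0$ — and contributes a factor $1+T^{\theta}$. The Gagliardo parts $[D^{\alpha'}e^{t\Delta}v]_{W_{\nu_s}^2}^2$ are handled the same way when $p_{\lceil s\rceil}=2$, via the energy identities for the $\dot H^{\nu_s}$-norm using $v\in\dot H^{\nu_s}$; the order-$(\lfloor s\rfloor+1)$ $L^2$-term in $(\ref{eq:u:e0c})$ is handled identically. (When $s\in\bbZ$ there are no Gagliardo seminorms and this already finishes the argument.)

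When $1<p_{\lceil s\rceil}<2$ the Gagliardo part needs a real-variable substitute for Plancherel: I would use the heat-semigroup (Triebel--Lizorkin/Besov) characterizations, namely $\int_0^{\infty}\|t^{1/2}\nabla e^{t\Delta}w\|_{L^p}^2\frac{\rmd t}{t}\lesssim\|w\|_{L^p}^2$ for $1<p<\infty$ and the analogous description of $W_{\nu_s}^{p_{\lceil s\rceil}}$ through $\int_0^{\infty}\bigl(t^{(1-\nu_s)/2}\|\nabla e^{t\Delta}w\|_{L^{p_{\lceil s\rceil}}}\bigr)^{p_{\lceil s\rceil}}\frac{\rmd t}t\approx[w]_{W_{\nu_s}^{p_{\lceil s\rceil}}}^{p_{\lceil s\rceil}}$, combined with the $L^{p_{\lceil s\rceil}}\!\to\!L^2$ smoothing of $e^{t\Delta}$ and the fact that translation commutes with $e^{t\Delta}$ (so that, via Proposition~\ref{prop:p1}, the Gagliardo seminorm of $D^{\alpha'}e^{t\Delta}v$ is controlled slice by slice in terms of $\|\delta_a v\|_{L^{p_{\lceil s\rceil}}}$, where $\delta_a v(x)=v(x+a e_j)-v(x)$); where convenient one may first pass from $W_{\nu_s}^{(p_{\lfloor s\rfloor},p_{\lceil s\rceil})}$ to a classical space by Lemma~\ref{Lm:L3a}, Corollary~\ref{Co:embedding:s} and Theorem~\ref{thm:embedding:fractional}. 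Assembling all pieces gives $(\ref{eq:u:e0b})$ and $(\ref{eq:u:e0c})$ with suitable constants $\theta_1,\theta_2$. Finally $(\ref{eq:u:q})$ follows from $(\ref{eq:u:e0c})$ by H\"older's inequality in $t$: writing $\|u(t,\cdot)\|_{W_{s+2}^{(\vec r,2,2)}}^q=t^{-(1+\varrho)q/2}\bigl(t^{1+\varrho}\|u(t,\cdot)\|_{W_{s+2}^{(\vec r,2,2)}}^2\bigr)^{q/2}$ and applying H\"older with exponents $2/q$ and $2/(2-q)$, the factor $\int_0^T t^{-(1+\varrho)q/(2-q)}\rmd t$ is finite exactly when $q<2/(2+\varrho)$ and equals a constant times $T^{1-(1+\varrho)q/(2-q)}$, which combines with $(1+T^{\theta_2})^{q/2}$ to give $(\ref{eq:u:q})$ after simplifying the exponent of $T$ to $1-(1+\varrho/2)q$.

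The main obstacle is the Gagliardo estimate in the genuinely nonuniform regime $1<p_{\lceil s\rceil}<2$: passing from the $L^{p_{\lceil s\rceil}}$-based seminorm $[v]_{W_{\nu_s}^{p_{\lceil s\rceil}}}$ to the $L^2$-based seminorm $[D^{\alpha'}e^{t\Delta}v]_{W_{\nu_s}^2}$ is not a linear change of integrability inside the Gagliardo double integral, so one cannot absorb all $j$ derivatives with a single $L^{p_{\lceil s\rceil}}\!\to\!L^2$ smoothing bound and must interpolate (or route through the Section~2--3 embeddings) while keeping every exponent exactly compatible with $\sigma$; once this smoothing/characterization step is in place, the integration-by-parts reduction, the bookkeeping of the $T$-powers, and the final H\"older argument are routine.
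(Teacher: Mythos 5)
Your approach is genuinely different from the paper's, and while the easy parts check out, the two hard steps are left open in a way that I do not think your route can close without discovering the paper's actual mechanism.

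What is fine: your proof of \eqref{eq:u:e0a} via Young's inequality $\|K_t*D^{\alpha}u_0\|_{L^{p_{|\alpha|}}}\le\|D^{\alpha}u_0\|_{L^{p_{|\alpha|}}}$ and Minkowski's integral inequality on the Gagliardo seminorm is correct, and is in fact more elementary than what the paper does for this step. Your derivation of \eqref{eq:u:q} from \eqref{eq:u:e0c} by H\"older in $t$ is also correct and is exactly what the paper does.

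Two genuine gaps. First, your integration-by-parts chain reducing $\int_0^T t^{\varrho}\|\nabla e^{t\Delta}v\|_{L^2}^2\,\rmd t$ to $\int_0^T t^{\varrho-1}\|e^{t\Delta}v\|_{L^2}^2\,\rmd t$, followed by the smoothing bound $\|e^{t\Delta}v\|_{L^2}\lesssim t^{-\frac{N}{2}(\frac{1}{p_{\lfloor s\rfloor}}-\frac{1}{2})}\|v\|_{L^{p_{\lfloor s\rfloor}}}$, requires the resulting exponent $\varrho-1-N(\tfrac{1}{p_{\lfloor s\rfloor}}-\tfrac12)$ to be strictly greater than $-1$; you write that it ``converges precisely because $\varrho\ge N(\tfrac{1}{p_{\lfloor s\rfloor}}-\tfrac12)$'' with a non-strict inequality, and indeed when $p_{\lfloor s\rfloor}=2$ the surplus $\tfrac{2-p_{\lfloor s\rfloor}}{2}$ is exactly zero, so the final integral is $\int_0^T t^{-1}\,\rmd t=\infty$. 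You must stop the reduction one step earlier (when $p_{\lfloor s\rfloor}=2$, the quantity $\int_0^T\|\nabla e^{t\Delta}v\|_{L^2}^2\,\rmd t\le\tfrac12\|v\|_{L^2}^2$ directly from the energy identity, no further reduction needed). This is fixable, but the proposal as written breaks in a case explicitly allowed by the hypotheses.

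Second and more seriously, the case $1<p_{\lceil s\rceil}<2$ for the Gagliardo seminorm terms is precisely where you say ``the main obstacle'' lies, and the proposal offers only a sketch (heat-semigroup characterizations of $W_{\nu_s}^{p_{\lceil s\rceil}}$, $L^{p_{\lceil s\rceil}}\to L^2$ smoothing, ``interpolate while keeping every exponent compatible'') without carrying it through. The paper's actual mechanism is quite different and worth noting because it is the crux of the theorem: it proves an $L^p$ energy identity (Lemma~\ref{Lm:Lp12}),
\[
\frac{1}{p}\|u(t,\cdot)\|_{L^p}^p+(p-1)\int_0^t\int_{\bbR^N}|\nabla_x u|^2\,|u|^{p-2}\,\rmd x\,\rmd\tau=\frac{1}{p}\|u_0\|_{L^p}^p,
\]
applies it both to $\partial_x^{\alpha}u$ and to the difference quotient $g(t,a,x)=\partial_x^{\alpha}u(t,x+ae_j)-\partial_x^{\alpha}u(t,x)$ (which also solves the heat equation), and then exploits $p\le 2$ by bounding the factors $|u|^{p-2}$ and $|g/|a|^{\nu_s}|^{p_{\lceil s\rceil}-2}$ \emph{from below} using the pointwise heat-kernel estimates $|\partial_x^{\alpha}u(t,x)|\lesssim t^{-N/(2p_{\lfloor s\rfloor})}\|\partial^{\alpha}u_0\|_{L^{p_{\lfloor s\rfloor}}}$ and $|g(t,a,x)|/|a|^{\nu_s}\lesssim t^{-N/(2p_{\lfloor s\rfloor})-1/2}$. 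This turns the dissipation integral directly into a weighted $L^2$ bound of the form $\int_0^T t^{(2-p)(\cdot)}\|\nabla_x\partial_x^{\alpha}u\|_{L^2}^2\,\rmd t\lesssim\|\partial^{\alpha}u_0\|^2$, with the weight emerging automatically from the pointwise bound rather than from any $L^p\to L^2$ smoothing or Besov characterization. Your convolution/Fourier route does not produce this $|u|^{p-2}$ factor and therefore has no analogous handle on the Gagliardo piece when $p_{\lceil s\rceil}<2$; without supplying the missing interpolation argument in detail, the proposal does not constitute a proof of \eqref{eq:u:e0b}--\eqref{eq:u:e0c}.
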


Before giving a  proof of  Theorem~\ref{thm:HeatEquation},
we discuss a lemma.

\begin{Lemma}\label{Lm:Lp12}
Suppose that $f\in C_b^2(\bbR)$ and $f, f''\in L^p(\bbR)$ for some
$1<p<\infty$. Then $f'' |f|^{p-2}f\in L^1(\bbR)$
and
\[
  \int_{\bbR} f''(x) |f(x)|^{p-2} f(x) \rmd x
  = - (p-1)\int_{\bbR} |f'(x)|^2 |f(x)|^{p-2} \rmd x,
\]
where we apply the convention
that $|f'(x)|^2 |f(x)|^{p-2}=0$ whenever
$f'(x)=0$.
\end{Lemma}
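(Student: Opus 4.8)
The plan is to obtain the $L^1$ membership from H\"older's inequality and then establish the identity by an integration by parts, after regularizing the nonlinearity $u\mapsto|u|^{p-2}u$ in the subquadratic range.

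First, since $(p-1)p'=p$ one has $\|\,|f|^{p-1}\,\|_{L^{p'}}=\|f\|_{L^p}^{p-1}$, so H\"older gives
\[
  \bigl\|f''\,|f|^{p-2}f\bigr\|_{L^1}\le \|f''\|_{L^p}\,\|f\|_{L^p}^{p-1}<\infty,
\]
which proves the first claim and, moreover, supplies the function $|f''|\,|f|^{p-1}\in L^1(\bbR)$ that will dominate the relevant integrands below. I also record that $f\in C_b^2$ together with $f\in L^p$ forces $f(x)\to 0$ as $|x|\to\infty$ (being Lipschitz, $f$ is uniformly continuous, and a uniformly continuous $L^p$ function vanishes at infinity), hence $|f(x)|^{p-1}\to 0$.

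For $p\ge 2$ the map $h(u)=|u|^{p-2}u$ is $C^1$ on $\bbR$ with continuous derivative $h'(u)=(p-1)|u|^{p-2}$ (so no convention is needed in this range), and therefore $g:=f'\,h(f)$ is $C^1$ with $g'=f''\,|f|^{p-2}f+(p-1)|f'|^2|f|^{p-2}$. I would integrate $g'$ over $[-R,R]$: since the second term is nonnegative, $|g(\pm R)|\le\|f'\|_{L^\infty}\|f\|_{L^\infty}^{p-1}$, and the first term is absolutely integrable, this yields $\int_{-R}^{R}|f'|^2|f|^{p-2}\,\rmd x\le C$ uniformly in $R$, whence $|f'|^2|f|^{p-2}\in L^1(\bbR)$ by monotone convergence. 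Then $g'\in L^1(\bbR)$ and $g(\pm R)\to 0$, so $\int_{\bbR}g'\,\rmd x=0$, which is exactly the asserted identity. For $1<p<2$ the derivative of $u\mapsto|u|^{p-2}u$ blows up at $0$, so I would instead use $\Phi_\varepsilon(u)=(u^2+\varepsilon^2)^{(p-2)/2}u$, for which $|\Phi_\varepsilon(u)|\le|u|^{p-1}$ and $0\le\Phi_\varepsilon'(u)\le(u^2+\varepsilon^2)^{(p-2)/2}\le|u|^{p-2}$. The same argument applied to $g_\varepsilon:=f'\,\Phi_\varepsilon(f)$ gives, for each $\varepsilon\in(0,1]$,
\[
  \int_{\bbR} f''\,\Phi_\varepsilon(f)\,\rmd x+\int_{\bbR}|f'|^2\Phi_\varepsilon'(f)\,\rmd x=0 .
\]
Letting $\varepsilon\to 0$: dominated convergence with dominating function $|f''|\,|f|^{p-1}$ sends the first integral to $\int_{\bbR} f''\,|f|^{p-2}f$; Fatou's lemma applied to the second gives $(p-1)\int_{\bbR}|f'|^2|f|^{p-2}\le-\int_{\bbR} f''\,|f|^{p-2}f<\infty$; and with this finiteness in hand a final dominated convergence (dominating function $|f'|^2|f|^{p-2}$, using $\Phi_\varepsilon'(f)\le|f|^{p-2}$ and $\Phi_\varepsilon'(f)\to(p-1)|f|^{p-2}$ on $\{f\ne 0\}$) sends the second integral to $(p-1)\int_{\bbR}|f'|^2|f|^{p-2}$. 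The stated convention $|f'|^2|f|^{p-2}=0$ on $\{f'=0\}$ is invoked only to interpret the integrand on $\{f=f'=0\}$; the set $\{f=0,\ f'\ne 0\}$ has measure zero and contributes nothing.

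The main obstacle is that one may not assume the right-hand side $\int_{\bbR}|f'|^2|f|^{p-2}$ is finite at the outset, so the integration by parts has to be performed on bounded intervals first and the finiteness extracted from the sign of that term together with a uniform bound, before the global identity can be taken; in the range $1<p<2$ this is compounded by the singularity of $u\mapsto|u|^{p-2}u$ at the origin, which forces the regularization and the two-step Fatou-then-dominated-convergence passage to the limit.
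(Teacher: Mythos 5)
Your proof is correct, and for the subquadratic range $1<p<2$ it takes a genuinely different route from the paper. For $p\ge 2$ both arguments are essentially the same integration by parts on expanding intervals, with boundary terms killed by the decay of $|f|^{p-1}$; your observation that $f(x)\to 0$ as $|x|\to\infty$ (Lipschitz plus $L^p$) is a small simplification over the paper's use of a sequence $x_n$ with $f(x_n)\to 0$. The real divergence is in $1<p<2$: the paper works directly with the singular nonlinearity $u\mapsto|u|^{p-2}u$ by decomposing $\{f\ne 0\}$ into countably many open intervals, integrating by parts on $\varepsilon$-retracted intervals, and then explicitly showing that $\{f=0,\ f'\ne 0\}$ is (at most) countable so that summing over the intervals recovers the full integral; it must also treat separately the cases where one component interval is unbounded. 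You instead regularize the nonlinearity globally via $\Phi_\varepsilon(u)=(u^2+\varepsilon^2)^{(p-2)/2}u$, obtain the exact identity for each $\varepsilon>0$ by the same $[-R,R]$ argument, and then pass $\varepsilon\to 0$ using dominated convergence on the left-hand term and a Fatou-then-dominated-convergence two-step on the nonnegative right-hand term. Your regularization approach is the more standard PDE device and avoids the case analysis on components of $\{f\ne 0\}$; the paper's interval decomposition is more elementary and self-contained. Both are legitimate. One small thing you assert without proof is that $\{f=0,\ f'\ne 0\}$ has measure zero (the paper spells out the isolated-zeros argument); it is standard, but in a write-up you would want at least a one-line justification.
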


\begin{proof}
Since $f, f''\in L^p$, we see from H\"older's
inequality that
$ f'' |f |^{p-2} f \in L^1$.
To prove the conclusion, it suffices to show
the following equations,
\begin{align}
  \int_0^{\infty} f''(x) |f(x)|^{p-2} f(x) \rmd x
  &=-f'(0)|f(0)|^{p-2} f(0) - (p-1)\int_0^{\infty} |f'(x)|^2 |f(x)|^{p-2} \rmd x, \label{eq:u:e21} \\
  \int_{-\infty}^0 f''(x) |f(x)|^{p-2} f(x) \rmd x
  &= f'(0)|f(0)|^{p-2} f(0)- (p-1)\int_{-\infty}^0 |f'(x)|^2 |f(x)|^{p-2} \rmd x.\label{eq:u:e22}
\end{align}
We prove only the first equation, and the second
one can be proved similarly.

Since $f\in L^p(\bbR)$, there exists
a sequence  $\{x_n:\, n\ge 1\}\subset (0,\infty)$
such that
\begin{equation}\label{eq:xn}
  \lim_{n\rightarrow\infty} x_n =  \infty
  \mbox{\quad and\quad}
  \lim_{n\rightarrow\infty} f(x_n)
= 0.
\end{equation}
When $p\ge 2$, $|f(x)|^{p-2} f(x)$ is continuously differentiable.    Integrating by parts
we obtain
\begin{align*}
&\int_0^{\infty} f''(x) |f(x)|^{p-2} f(x) \rmd x
  \\
&= \lim_{n\rightarrow\infty}
    \int_0^{x_n} f''(x) |f(x)|^{p-2} f(x) \rmd x
    \\
&=  \lim_{n\rightarrow\infty}
    \Big( f'(x_n) |f(x_n)|^{p-2}f(x_n)
     - f'(0)|f(0)|^{p-2}f(0)
    - (p-1)\int_0^{x_n} |f'(x)|^2 |f(x)|^{p-2} \rmd x
     \Big) \\
&=   - f'(0)|f(0)|^{p-2}f(0)
    - (p-1)\int_0^{\infty} |f'(x)|^2 |f(x)|^{p-2}
     \rmd x .
\end{align*}
Hence (\ref{eq:u:e21}) is valid.

It remains to consider the case $1<p<2$.
Since $f$ is continuous, the set $E := \{x>0:\, f(x)\ne 0\}$
is   open   in $(0,\infty)$.
Consequently, $E$
is the union of at most countable
pairwise disjoint intervals $(a_i, b_i)$, $i\in I $.
There are three cases:

\medskip\noindent
(i)\, $b_i<\infty$ for each $i\in I$.
\medskip

For each interval $(a_i,b_i)$
and $\varepsilon>0$ small enough,
$|f(x)|^{p-2} f(x) $ is continuously differentiable
on $[a_i+\varepsilon, b_i-\varepsilon]$.
Integrating by parts again, we write
\begin{align}
&\int_{a_i}^{b_i}
       f''(x) |f(x)|^{p-2} f(x) \rmd x  \nonumber \\
 &=\lim_{\varepsilon\rightarrow 0}
     \int_{a_i+\varepsilon}^{b_i-\varepsilon}
             f''(x) |f(x)|^{p-2} f(x) \rmd x
      \nonumber  \\
 &= \lim_{\varepsilon\rightarrow0}
     \Big(
     (f'|f|^{p-2}f) (b_i-\varepsilon)
     - (f'|f|^{p-2}f) (a_i+\varepsilon)
     - (p-1)\int_{a_i+\varepsilon}^{b_i-\varepsilon} |f'(x)|^2 |f(x)|^{p-2} \rmd x
     \Big)           \nonumber  \\
&= -(f'|f|^{p-2}f)(a_i)- (p-1)\int_{a_i }^{b_i } |f'(x)|^2 |f(x)|^{p-2} \rmd x,
  \label{eq:u:e24}
\end{align}
where $f(a_i)=0$ if $a_i\ne 0$.
Moreover, if $0\not\in \{a_i:\, i\in I\}$,
then $f(0)=0$.
It follows that
\begin{align}
 \int_0^{\infty} f''(x) |f(x)|^{p-2} f(x) \rmd x
&=   \sum_{i\in I}
    \int_{a_i}^{b_i}
       f''(x) |f(x)|^{p-2} f(x) \rmd x
   \nonumber  \\
&= - f'(0)|f(0)|^{p-2}f(0)
    -  \sum_{i\in I}
      (p-1) \int_{a_i}^{b_i} |f'(x)|^2 |f(x)|^{p-2} \rmd x .
      \label{eq:u:e23}
\end{align}

Next we show that the set
$F:=\{x>0:\, f(x)=0 \mbox { and } f'(x)\ne 0\}$
is at most countable.

Take some $x_0\in F$. If for any $\varepsilon>0$,
$((x_0-\varepsilon, x_0+\varepsilon)\cap F)
\setminus\{x_0\}\ne
\emptyset$, then there exists a sequence
$\{y_k:\, k\ge 1\}\subset F\setminus\{x_0 \}$ such that
$\lim_{k\rightarrow \infty} y_k=x_0$.
Consequently,
\[
  f'(x_0) =\lim_{k\rightarrow \infty}  \frac{f(y_k)-f(x_0)}{y_k-x_0} =0,
\]
which contradicts the fact $f'(x_0)\ne 0$.
Hence for any $x\in F$,
there is some
$\varepsilon>0$ such that
$(x-\varepsilon, x+\varepsilon)\cap F = \{x\}$.
Consequently,
there exist rational numbers $r_x$ and $R_x$
such that $r_x<x<R_x$
and $(r_x, R_x)\cap F = \{x\}$.
Since $\{(r_x,R_x):\, x\in F\}$ is at most countable,
$F = \cup_{x\in F} (r_x,R_x)\cap F$ is also at most countable.

Using the convention that $|f'(x)|^2 |f(x)|^{p-2}=0$
whenever $f'(x)=0$, we see from (\ref{eq:u:e23}) that
\begin{align}
\int_0^{\infty} f''(x) |f(x)|^{p-2} f(x) \rmd x
&= - f'(0)|f(0)|^{p-2}f(0)
    - (p-1)\int_{\{x:\, f(x)\ne 0\}}  |f'(x)|^2 |f(x)|^{p-2} \rmd x  \nonumber \\
 &\qquad
    - (p-1)\int_{\{x:\, f(x)=f'(x)=0\}}  |f'(x)|^2 |f(x)|^{p-2} \rmd x  \nonumber  \\
&=     - f'(0)|f(0)|^{p-2}f(0)
    -(p-1) \int_0^{\infty}  |f'(x)|^2 |f(x)|^{p-2} \rmd x.
    \nonumber
\end{align}
Hence (\ref{eq:u:e21}) is valid.

\medskip\noindent
(ii)   $b_{i_0}=\infty$ for some $i_0\in I$
and $a_{i_0}>0$.
\medskip

In this case, $b_i\le a_{i_0}$ for all $i\in I\setminus\{i_0\}$.
Note that
$
(f' |f|^{p-2}f)(a_{i_0})=0$.
Similar arguments as to those in Case (i) yield
that
\begin{align}
\int_0^{a_{i_0}} f''(x) |f(x)|^{p-2} f(x) \rmd x
=   - (f' |f|^{p-2}f)(0)
    -(p-1) \int_0^{a_{i_0}}  |f'(x)|^2 |f(x)|^{p-2} \rmd x.
     \label{eq:u:e25}
\end{align}

Recall that there is a sequence
 $\{x_n:\, n\ge 1\} $ satisfying (\ref{eq:xn}).
In analogy with   (\ref{eq:u:e24}) we obtain
\begin{align}
&\int_{a_{i_0}}^{\infty}
    f''(x) |f(x)|^{p-2} f(x) \rmd x \nonumber \\
&= \lim_{\substack{n\rightarrow\infty\\
   \varepsilon\rightarrow 0}}
   \int_{a_{i_0}+\varepsilon}^{x_n}   f''(x) |f(x)|^{p-2} f(x) \rmd x
  \nonumber  \\
&= \lim_{\substack{n\rightarrow\infty\\
   \varepsilon\rightarrow 0}}
   \Big((f' |f|^{p-2}f)  (x_n)
      - (f' |f|^{p-2}f)(a_{i_0}+\varepsilon)
      - (p-1)\int_{a_{i_0}}^{x_n}   |f'(x)|^2 |f(x)|^{p-2} \rmd x
       \Big)\nonumber \\
&=
      - (p-1)\int_{a_{i_0}}^{\infty}   |f'(x)|^2 |f(x)|^{p-2}
      \rmd x. \label{eq:u:e26}
\end{align}
Now (\ref{eq:u:e21}) follows from
(\ref{eq:u:e25}) and (\ref{eq:u:e26}).

\medskip\noindent
(iii) $E = (0,\infty)$.
\medskip

Similar arguments as in (\ref{eq:u:e26}) yield
(\ref{eq:u:e21}).
This completes the proof.
\end{proof}

\bigskip
\begin{proof}[Proof of Theorem~\ref{thm:HeatEquation}]
First, we assume that $s\not\in\bbZ$.
We have the  following sequence of steps.

\medskip\noindent
(S1)\, We prove that
\begin{align}
  \|\partial_x^{\alpha}u(t,\cdot)
    \|_{L^{p_{|\alpha|}}}
     &\le     \|\partial^{\alpha} u_0\|_{L^{p_{|\alpha|}}},
     \qquad
     |\alpha|\le \lfloor s\rfloor,
     t>0.    \label{eq:u:e3}
\end{align}

Without loss of generality, we assume that
$u$ and $u_0$ are real functions.
Since $\partial_t u = \Delta_x u$,
we have
\begin{equation}\label{eq:u:e2}
  \int_{\bbR^N} (\partial_t u(t,x)) |u(t,x)|^{p_0-2}
     u(t,x) \rmd x
  = \int_{\bbR^N} (\Delta_x u(t,x)) |u(t,x)|^{p_0-2}
     u(t,x) \rmd x .
\end{equation}
From (\ref{eq:u:heat}) we see that for fixed $t>0$
and $0<\varepsilon<t/2$,
all of
$(u(t+\varepsilon,x) - u(t,x))/\varepsilon$,
$ u(t+\varepsilon,x)$
and $u(t,x)$ are
bounded by some function  $F\in L^{p_0}(\bbR^N)$, which is independent of $\varepsilon$.
Applying the inequality
\[
 \Big|  |a|^{p_0} - |b|^{p_0} \Big|
   \le C_{p_0} |a-b|(|a|^{p_0-1} + |b|^{p_0-1}),
 \]
we get
\begin{align*}
  \bigg|\frac{|u(t+\varepsilon,x)|^{p_0} - |u(t,x)|^{p_0}}{\varepsilon}\bigg|
& \le C_{p_0} \frac{|u(t+\varepsilon,x)  - u(t,x)| }{\varepsilon} \cdot (|u(t+\varepsilon,x)|^{p_0-1}
 +|u(t,x)|^{p_0-1}) \\
 &
 \le C_{p_0} |F(x)|^{p_0}.
\end{align*}
Note that $\partial_t |u(t,x)|^{p_0}
= p_0 |u(t,x)|^{p_0-2}u(t,x) \partial_t u(t,x) $.
We see from the Lebesgue  dominated convergence  theorem that
\[
 \int_{\bbR^N} (\partial_t u(t,x)) |u(t,x)|^{p_0-2}
     u(t,x) \rmd x
  =  \frac{1}{p_0} \cdot  \frac{\rmd}{\rmd t} \|u(t,\cdot)\|_{L^{p_0}}^{p_0}.
\]

On the other hand, applying Lemma~\ref{Lm:Lp12}
yields that
\begin{align*}
 \int_{\bbR^N} (\Delta_x u(t,x)) |u(t,x)|^{p_0-2}
     u(t,x) \rmd x
&  = -(p_0-1) \int_{\bbR^N} |\nabla_x u(t,x)|^2 |u(t,x)|^{p_0-2}
   \rmd x.
\end{align*}
Now we see from (\ref{eq:u:e2}) that
\[
  \frac{1}{p_0} \cdot  \frac{\rmd}{\rmd t} \|u(t,\cdot)\|_{L^{p_0}}^{p_0}
  + (p_0-1) \int_{\bbR^N} |\nabla_x u(t,x)|^2 |u(t,x)|^{p_0-2}
   \rmd x=0.
\]
Integrating with respect to $t$, we obtain
\begin{equation}\label{eq:u:e12}
    \frac{1}{p_0}  \|u(t,\cdot)\|_{L^{p_0}}^{p_0}
  + (p_0-1) \int_0^t\int_{\bbR^N} |\nabla_x u(\tau,x)|^2 |u(\tau,x)|^{p_0-2}
   \rmd x \,\rmd \tau=  \frac{1}{p_0}  \|u_0\|_{L^{p_0}}^{p_0}.
\end{equation}
Note that for any multi-index $\alpha$
with $|\alpha|\le \lfloor s\rfloor $,
$\partial_x^{\alpha}u(t,x)$
meets the heat equation with initial data
$\partial^{\alpha}u_0$.
Replacing $\partial_x^{\alpha} u$ for $u$
and $p_{|\alpha|}$ for $p_0$
in
(\ref{eq:u:e12}), respectively,
 we get when $|\alpha|\le \lfloor s\rfloor $,
\begin{align}
&\frac{1}{p_{|\alpha|}}  \|\partial_x^{\alpha}u(t,\cdot)
    \|_{L^{p_{|\alpha|}}}^{p_{|\alpha|}}
  + (p_{|\alpha|}-1) \int_0^t\int_{\bbR^N}
  |\nabla_x \partial_x^{\alpha} u(\tau,x)|^2 |\partial_x^{\alpha}u(\tau,x)|^{p_{|\alpha|}-2}
   \rmd x\,\rmd \tau \nonumber  \\
   &=  \frac{1}{p_{|\alpha|}}  \|\partial^{\alpha} u_0\|_{L^{p_{|\alpha|}}}^{p_{|\alpha|}}.
 \label{eq:u:e3a}
\end{align}
Hence (\ref{eq:u:e3}) is true.

\medskip\noindent
(S2)\, We prove (\ref{eq:u:e0a}).

Set $
g(t,a,x):= \partial_x^{\alpha} u(t,x+ae_j) - \partial_x^{\alpha}u(t,x)$,
where $a\in\bbR$, $|\alpha|= \lfloor s\rfloor$
 and $1\le j\le N$. We have
\[
  \partial_t g(t,a,x) - \Delta_x g(t,a,x)=0.
\]
Replacing $
g(t,a,x) $ for $ u(t,x)$
and $p_{\lceil s\rceil}$ for $p_0$  in (\ref{eq:u:e12}), respectively,
we get
\begin{align}
   &   \frac{1}{p_{\lceil s\rceil}}  \|g(t,a,\cdot)\|_{L^{p_{\lceil s\rceil}}}^{p_{\lceil s\rceil}}
  +  (p_{\lceil s\rceil}-1) \int_0^t
   \int_{\bbR^N}
   |\nabla_x g(\tau,a,x)|^2
   |g(\tau,a,x)|^{p_{\lceil s\rceil}-2}
   \rmd \tau\,\rmd x \nonumber \\
  &=  \frac{1}{p_{\lceil s\rceil}}
   \|g (0,a,\cdot)\|_{L^{p_{\lceil s\rceil}}}^{p_{\lceil s\rceil}},\quad
     |\alpha|= \lfloor s\rfloor.\label{eq:u:e8}
\end{align}
Recall that $\nu_s = s - \lfloor s\rfloor$.
Multiplying  both sides by $1/|a|^{1+\nu_s p_{\lceil s\rceil}}$
  and integrating with
respect to $a\in\bbR$ yields,     for all $t>0$,
\begin{equation}\label{eq:u:e4}
       \Big[\partial_x^{\alpha} u(t,\cdot)\Big]_{W_{\nu_s}^{p_{\lceil s\rceil}}}^{p_{\lceil s\rceil}}
    \le C_{s,\vec p,N}
   \Big[\partial^{\alpha} u_0\Big]_{W_{\nu_s}^{p_{
    \lceil s\rceil}}}^{p_{\lceil s\rceil}},
\end{equation}
where we applied  Proposition \ref{prop:p1}.
It follows from (\ref{eq:u:e3})
and  (\ref{eq:u:e4}) that
\begin{align*}
\sup_{0\le t\le T}
  \|u(t,\cdot)\|_{W_s^{\vec p}}
&= \sup_{0\le t\le T} \bigg(
   \sum_{|\alpha|\le \lfloor s\rfloor}
      \|\partial_x^{\alpha}u(t,\cdot)
    \|_{L^{p_{|\alpha|}}}
   +  \sum_{|\alpha|= \lfloor s\rfloor}  \Big[\partial_x^{\alpha} u(t,\cdot)\Big]_{W_{\nu_s}^{p_{\lceil s\rceil}}}
   \bigg)
   \\
&\le
   \sum_{|\alpha|\le  \lfloor s\rfloor}
      \|\partial^{\alpha}u_0
    \|_{L^{p_{|\alpha|}}}
   +  C_{s,\vec p,N}\sum_{|\alpha|= \lfloor s\rfloor}  \Big[\partial^{\alpha} u_0\Big]_{W_{\nu_s}^{p_{\lceil s\rceil}}}
      \\
 & \le C'_{s,\vec p,N} \|u_0\|_{W_s^{\vec p}},
\end{align*}
which proves (\ref{eq:u:e0a}).

\medskip\noindent
(S3)\,  We prove (\ref{eq:u:e0b}).

Taking derivatives on both sides of (\ref{eq:u:heat}),
we obtain
\[
    \partial_x^{\alpha} u(t,x) =
     \frac{1}{(4\pi t)^{N/2}} \int_{\bbR^N}
        e^{-|x-y|^2/(4t)} \partial^{\alpha}u_0(y) \rmd y, \qquad \forall t>0, x\in\bbR^N, |\alpha|=\lfloor s\rfloor.
\]
It follows from H\"older's inequality that
\begin{equation}\label{eq:u:e30}
  |\partial_x^{\alpha} u(t,x)|
  \le \frac{C}
  {t^{N/(2p_{\lfloor s\rfloor})}}
  \|\partial^{\alpha}u_0\|_{L^{p_{\lfloor s\rfloor}}},
  \quad \forall x\in\bbR^N.
\end{equation}
Since $p_{\lfloor s\rfloor}\le 2$, we have
\[
  |\partial_x^{\alpha} u(t,x)|^{p_{\lfloor s\rfloor}-2}
  \ge \Big(\frac{C}
  {t^{N/(2p_{\lfloor s\rfloor})}}
  \|\partial^{\alpha}u_0\|_{L^{p_{\lfloor s\rfloor}}}\Big)^{p_{\lfloor s\rfloor}-2},
  \quad \forall x\in\bbR^N.
\]
As a consequence of (\ref{eq:u:e3a})  we obtain
\[
  \int_0^T\int_{\bbR^N}
  |\nabla_x \partial_x^{\alpha} u(t,x)|^2
  \Big(\frac{C}
  {t^{N/(2p_{\lfloor s\rfloor})}}
  \|\partial^{\alpha}u_0\|_{L^{p_{\lfloor s\rfloor}}}\Big)^{p_{\lfloor s\rfloor}-2}
   \rmd x\, \rmd t
\le
 \frac{1}{p_{\lfloor s\rfloor}(p_{\lfloor s\rfloor}-1)}
   \|\partial^{\alpha} u_0\|_{L^{p_{
     \lfloor s\rfloor}}}^{p_{\lfloor s\rfloor}}.
\]
Hence,
\begin{equation}\label{eq:u:e33}
      \int_0^T
  t^{(2-p_{\lfloor s\rfloor})N/(2p_{\lfloor s\rfloor} )}
   \|\nabla_x \partial_x^{\alpha}u(t,\cdot)\|_{L^2}^2
   \rmd t
  \le  C'
    \|\partial^{\alpha}u_0\|_{L^{p_{\lfloor s\rfloor}}
      }^2
    ,\quad
     |\alpha|= \lfloor s\rfloor.
\end{equation}
Therefore,
\[
        \int_0^T
  t^{(2-p_{\lfloor s\rfloor})\sigma}
   \|\nabla_x \partial_x^{\alpha}u(t,\cdot)\|_{L^2}^2
   \rmd t    \le  C T^{1-p_{\lfloor s\rfloor}/2}
    \|\partial^{\alpha}u_0\|_{L^{p_{\lfloor s\rfloor}}
      }^2
    ,\quad
     |\alpha|= \lfloor s\rfloor.
\]
Consequently,
\begin{equation}\label{eq:u:e34}
      \int_0^T
  t^{\varrho}
   \|\nabla_x \partial_x^{\alpha}u(t,\cdot)\|_{L^2}^2
   \rmd t    \le  C
   T^{(p_{\lfloor s\rfloor}-p_s)\sigma}
   T^{1-p_{\lfloor s\rfloor}/2}
    \|\partial^{\alpha}u_0\|_{L^{p_{\lfloor s\rfloor}}
      }^2
    ,\quad
     |\alpha|= \lfloor s\rfloor.
\end{equation}

Next we estimate $[\partial_x^{\alpha+\beta}u(t,\cdot)]_{
 W_{\nu_s}^2}$.
 Setting $t=T$ in (\ref{eq:u:e8}), we get
\begin{equation}\label{eq:u:e28}
   \int_0^T
   \int_{\bbR^N}
   |\nabla_x g(t,a,x)|^2
   |g(t,a,x)|^{p_{\lceil s\rceil}-2}
   \rmd t\, \rmd x
   \le   \frac{ 1 }{p_{\lceil s\rceil}(p_{\lceil s\rceil}-1)}
   \|g (0,a,\cdot)\|_{L^{p_{\lceil s\rceil}}}^{p_{\lceil s\rceil}},\quad
     |\alpha|= \lfloor s\rfloor.
\end{equation}
Thus,
\begin{equation}\label{eq:u:e28a}
   \int_0^T
   \int_{\bbR^N}
   \frac{|\nabla_x g(t,a,x)|^2}{|a|^{1+2\nu_s}}
   \cdot
   \frac{|g(t,a,x)|^{p_{\lceil s\rceil}-2}}{|a|^{(p_{\lceil s\rceil}-2)\nu_s}}
   \rmd t\, \rmd x
   \le   C
   \frac{\|g (0,a,\cdot)\|_{L^{p_{\lceil s\rceil}}}^{p_{\lceil s\rceil}}}
   {|a|^{1+p_{\lceil s\rceil}\nu_s}}
   ,\quad
     |\alpha|= \lfloor s\rfloor.
\end{equation}
If $|a|\ge 1$, we see from (\ref{eq:u:e30}) that
\[
  \frac{|g(t,a,x)|}{|a|^{\nu_s}}
  \le
 \frac{C}
  {t^{N/(2p_{\lfloor s\rfloor})}}
  \|\partial^{\alpha}u_0\|_{L^{p_{\lfloor s\rfloor}}}
  \le
 \frac{CT^{1/2}}
  {t^{N/(2p_{\lfloor s\rfloor})+1/2}}
  \|\partial^{\alpha}u_0\|_{L^{p_{\lfloor s\rfloor}}},
  \quad
  \forall 0<t<T, x\in\bbR^N.
\]
If $|a|<1$, we have
\begin{align*}
\frac{|g(t,a,x)|}{|a|^{\nu_s}}
&= \frac{1}{|a|^{\nu_s}}\bigg|
   \int_0^a \partial_{x_j}\partial_x^{\alpha} u(t,x+\tau e_j)
      \rmd \tau
  \bigg|
 \le \|  \partial_{x_j}\partial_x^{\alpha} u(t,\cdot)\|_{L^{\infty}}.
\end{align*}
By (\ref{eq:u:heat}),
\[
 \partial_{x_j}   \partial_x^{\alpha} u(t,x) =
     \frac{1}{(4\pi t)^{N/2}} \int_{\bbR^N}
       \frac{-(x_j-y)}{ 2t^{1/2}} e^{-|x-y|^2/(4t)} \partial^{\alpha}u_0(y) \rmd y .
\]
Therefore, for $|a|<1$,
\[
 \frac{|g(t,a,x)|}{|a|^{\nu_s}}
 \le  \frac{C}
  {t^{N/(2p_{\lfloor s\rfloor})+1/2}}
  \|\partial^{\alpha}u_0\|_{L^{p_{\lfloor s\rfloor}}},
 \quad  \forall t>0, x\in\bbR^N.
\]
Consequently
\[
  \frac{|g(t,a,x)|^{p_{\lceil s\rceil}-2}}{|a|^{(p_{\lceil s\rceil}-2)\nu_s}}
  \ge \bigg(\frac{C\max\{1,T^{1/2}\}}
  {t^{N/(2p_{\lfloor s\rfloor})+1/2}}
  \|\partial^{\alpha}u_0\|_{L^{p_{\lfloor s\rfloor}}}
  \bigg)^{p_{\lceil s\rceil}-2},\quad
  \forall 0<t<T, \, a\in\bbR,\, x\in\bbR^N.
\]
It follows from (\ref{eq:u:e28a}) that
\begin{align*}
&\hskip -10mm\int_0^T
   \int_{\bbR^N}
   t^{(2-p_{\lceil s\rceil})(N/(2p_{\lfloor s\rfloor})+1/2)}
   \frac{|\nabla_x g(t,a,x)|^2}{|a|^{1+2\nu_s}}
       \rmd t\,\rmd x \\
   &\le   \left(C\max\{1,T^{1/2}\}
   \|\partial^{\alpha}u_0\|_{L^{p_{\lfloor s\rfloor}}
      }\right)^{2-p_{\lceil s\rceil} }
   \frac{\|g (0,a,\cdot)\|_{L^{p_{\lceil s\rceil}}}^{p_{\lceil s\rceil}}}
   {|a|^{1+p_{\lceil s\rceil}\nu_s}}
   ,\quad
     |\alpha|= \lfloor s\rfloor.
\end{align*}
Integrating with respect to $a\in\bbR$ yields
\begin{align}
&  \hskip -10mm \int_0^T
   t^{(2-p_{\lceil s\rceil})(N/(2p_{\lfloor s\rfloor})+1/2)}
  \Big[\nabla_x \partial_x^{\alpha}u(t,\cdot )\Big]_{W_{\nu_s}^2}^2
       \rmd t \nonumber \\
  & \le   \left(C\max\{1,T^{1/2}\}
   \|\partial^{\alpha}u_0\|_{L^{p_{\lfloor s\rfloor}}
      }\right)^{2-p_{\lceil s\rceil} }
  \Big[\partial^{\alpha}u_0\Big]_{W_{\nu_s}^{p_{\lceil s\rceil}}}^{p_{\lceil s\rceil}}
   \nonumber \\
 &\le   C'\max\{1,T^{1-p_{\lceil s\rceil}/2}\}
  \|u_0\|_{W_s^{\vec p}}^2    ,
   \quad
     |\alpha|= \lfloor s\rfloor.\label{eq:u:e32}
\end{align}
Thus, we obtain
\begin{align}
   \int_0^T
   t^{\varrho }
  \Big[\nabla_x \partial_x^{\alpha}u(t,\cdot )\Big]_{W_{\nu_s}^2}^2
       \rmd t
 &\le   C'
    T^{(p_{\lceil s\rceil}-p_s)\sigma}
    \max\{1,T^{1-p_{\lceil s\rceil}/2}\}
  \|u_0\|_{W_s^{\vec p}}^2    ,
   \quad
     |\alpha|= \lfloor s\rfloor.\label{eq:u:e32a}
\end{align}
On the other hand, applying (\ref{eq:u:e3})
yields that
\begin{equation}\label{eq:u:e34a}
      \int_0^T
  t^{\varrho}
   \| \partial_x^{\alpha}u(t,\cdot)\|_{L^{p_{|\alpha|}}}^2
   \rmd t    \le
   T^{1+\varrho }
   \| \partial^{\alpha}u_0\|_{L^{p_{|\alpha|}}}^2
    ,\quad
    \mbox{ when }
     |\alpha|\le  \lfloor s\rfloor.
\end{equation}
Summing up
(\ref{eq:u:e34}), (\ref{eq:u:e32a}) and (\ref{eq:u:e34a}), we get
\begin{align*}
      \int_0^T
  t^{\varrho}
   \|  u(t,\cdot)\|_{W_{s+1}^{(\vec r,2)}}^2
   \rmd t    \le
      C(1+T^{\max\{1+\varrho,
       1-p_{\lceil s\rceil} /2+
      |p_{\lceil s\rceil} - p_{\lfloor s\rfloor}|
      \sigma
      \}})\|  u_0\|_{W_s^{\vec p}}^2.
\end{align*}
Hence (\ref{eq:u:e0b}) is valid.

\medskip\noindent
(S4)\,
  We prove (\ref{eq:u:e0c}).

Set $
h(t,a,x):=\partial_x^{\alpha+\beta}  u(t,x+ae_j) - \partial_x^{\alpha+\beta} u(t,x)$, where
$|\alpha|=\lfloor s\rfloor$,
$|\beta|=1$
and $1\le j\le N$.
We have
\[
  \partial_t h(t,a,x) - \Delta_x h(t,a,x)=0.
\]
Taking the $L_x^2$ inner product with
$t^{1+\varrho} h(t,a,x)$, we get
\[
  \int_{\bbR^N}
  (\partial_t h(t,a,x))
  t^{1+\varrho} h (t,a,x)\rmd x
    - \int_{\bbR^N}
    (\Delta_x h(t,a,x))t^{1+\varrho} h(t,a,x) \rmd x
    =0.
\]
Hence
\[
  \frac{1}{2}\cdot \frac{\rmd}{\rmd t}
     (t^{1+\varrho} \|h(t,a,\cdot)\|_{L^2}^2)
     + t^{1+\varrho}     \|\nabla_x h(t,a,\cdot)\|_{L^2}^2
           =\frac{1+\varrho}{2} t^{\varrho}  \|h(t,a,\cdot)\|_{L^2}^2.
\]
Integrating with respect to $t\in [0,T]$ yields
\begin{align}
\frac{T^{1+\varrho}}{2} \|h(T,a,\cdot)\|_{L^2}^2
+ \int_0^T    t^{1+\varrho} \|\nabla_x h(t,a,\cdot)\|_{L^2}^2
         \rmd t
&= \frac{1+\varrho}{2} \int_0^T
  t^{\varrho}\|h(t,a,\cdot)\|_{L^2}^2\rmd t
.  \label{eq:u:e15}
\end{align}
Consequently,
\begin{align}
\frac{T^{1+\varrho}}{2} \Big[\partial_x^{\alpha+\beta} u(T,\cdot)\Big]_{W_{\nu_s}^2}^2
&\!+\!  \int_0^T t^{1+\varrho}
 \Big[\nabla_x \partial_x^{\alpha+\beta} u(t,\cdot)\Big]_{W_{\nu_s}^2}^2
   \rmd t
 = \frac{1+\varrho}{2}\! \int_0^T
   t^{1+\varrho}\Big[\partial_x^{\alpha+\beta}  u(t,\cdot)\Big]_{W_{\nu_s}^2}^2\rmd t\nonumber \\
&\le C_{s,\vec p,N} T^{1+(p_{\lceil s\rceil}-p_s)\sigma}\max\{1,T^{1-p_{\lceil s\rceil}/2}\}
\|u_0\|_{W_{s}^{\vec p}}^2
, \nonumber
\end{align}
applying (\ref{eq:u:e32a}).
Hence  for $|\alpha|=\lfloor s\rfloor$ and
$|\beta|=1$,
\begin{equation}
\label{eq:u:e9}
\int_0^T t^{1+\varrho} \Big[\nabla_x \partial_x^{\alpha+\beta} u(t,\cdot)\Big]_{W_{\nu_s}^2}^2
   \rmd t
\le    C_{s,\vec p,N} T^{1+(p_{\lceil s\rceil}-p_s)\sigma}\max\{1,T^{1-p_{\lceil s\rceil}/2}\}
\|u_0\|_{W_{s}^{\vec p}}^2.
\end{equation}
On the other hand,
substituting
$ \partial_x^{\alpha+\beta}u(t,x)$
for
$h(t,a,x)$
in (\ref{eq:u:e15}), where $|\beta|=1$
and $|\alpha|= \lfloor s\rfloor$,
we get
\begin{align}
\frac{T^{1+\varrho}}{2} \| \partial_x^{\alpha+\beta}u(T, \cdot)\|_{L^2}^2
+ \int_0^T t^{1+\varrho} \|\nabla_x \partial_x^{\alpha+\beta}u(t, \cdot)\|_{L^2}^2
   \rmd t
&= \frac{1+\varrho}{2} \int_0^T
  t^{\varrho}\| \partial_x^{\alpha+\beta}u(t ,\cdot)\|_{L^2}^2\rmd t \nonumber \\
&\le
    C
   T^{(p_{\lfloor s\rfloor}-p_s)\sigma}
   T^{1-p_{\lfloor s\rfloor}/2}
    \|\partial^{\alpha}u_0\|_{L^{p_{\lfloor s\rfloor}}
      }^2 , \nonumber
\end{align}
applying (\ref{eq:u:e34}).
Hence
\begin{align}
 \int_0^T t^{1+\varrho} \|\nabla_x \partial_x^{\alpha+\beta}u(t, \cdot)\|_{L^2}^2
   \rmd t
&\le  C
   T^{(p_{\lfloor s\rfloor}-p_s)\sigma}
   T^{1-p_{\lfloor s\rfloor}/2}
    \|\partial^{\alpha}u_0\|_{L^{p_{\lfloor s\rfloor}}
      }^2
. \label{eq:u:e9a}
\end{align}
Moreover,
we see from (\ref{eq:u:e34}) that
when $|\alpha|=\lfloor s\rfloor$
and $|\beta|=1$,
\begin{equation}\label{eq:u:e34c}
      \int_0^T
  t^{1+\varrho}
   \|  \partial_x^{\alpha+\beta}u(t,\cdot)\|_{L^2}^2
   \rmd t    \le  C
   T^{1+(p_{\lfloor s\rfloor}-p_s)\sigma}
   T^{1-p_{\lfloor s\rfloor}/2}
    \|\partial^{\alpha}u_0\|_{L^{p_{\lfloor s\rfloor}}
      }^2
     .
\end{equation}
When $|\alpha|\le \lfloor s\rfloor$, we
 apply  (\ref{eq:u:e3}) to obtain
\begin{equation}\label{eq:u:e34d}
      \int_0^T
  t^{1+\varrho}
   \| \partial_x^{\alpha}u(t,\cdot)\|_{L^{p_{|\alpha|}}}^2
   \rmd t    \le
   T^{2+\varrho }
   \| \partial^{\alpha}u_0\|_{L^{p_{|\alpha|}}}^2
     .
\end{equation}
Combining  (\ref{eq:u:e9}), (\ref{eq:u:e9a}),
 (\ref{eq:u:e34c}) and (\ref{eq:u:e34d})
we deduce
\[
  \int_0^T
 t^{1+\varrho} \|   u(t,\cdot)\|_{W_{s+2}^{(\vec r,2,2) }}^2
     \rmd t
   \le
     C_{s,\vec p, N} \left(
     T^{(p_{\lfloor s\rfloor}-p_s)\sigma
       +1-p_{\lfloor s\rfloor}/2}
   +T^{2+\varrho}\right)
     \|u_0\|_{W_s^{\vec p}}^2.
\]
Hence (\ref{eq:u:e0c}) is true.

This completes the proof of (\ref{eq:u:e0a}), (\ref{eq:u:e0b})
and (\ref{eq:u:e0c})
in the case $s\not\in\bbZ$.

When $s\in\bbZ$, we apply similar arguments; we only provide a sketch.
First, (\ref{eq:u:e0a}) follows from (\ref{eq:u:e3}).
Then we get (\ref{eq:u:e0b})
by (\ref{eq:u:e3}) and (\ref{eq:u:e33}).
Finally, (\ref{eq:u:e0c}) follows from
(\ref{eq:u:e0b}) and
(\ref{eq:u:e9a}).
In both cases,  (\ref{eq:u:e0a}),
(\ref{eq:u:e0b}) and (\ref{eq:u:e0c}) are valid.
It follows that
for any $0<q<2/(2+\varrho)$,
\begin{align*}
\int_0^T
  \|   u(t,\cdot)\|_{W_{s+2}^{(\vec r,2,2) }}^q
     \rmd t
 &=   \int_0^T
   t^{-q(1+\varrho)/2} t^{q(1+\varrho)/2}
  \|   u(t,\cdot)\|_{W_{s+2}^{(\vec r,2,2) }}^q
     \rmd t    \\
 &\le \Big(\int_0^T t^{-q(1+\varrho)/(2-q)}\rmd t
    \Big)^{1-q/2}
    \Big(
    \int_0^T t^{(1+\varrho)} \|u(t,\cdot)\|_{W_{s+2}^{(\vec r,2,2) }}^2
     \rmd t
    \Big)^{q/2}    \\
 &\le C_{s,\vec p, N} T^{1-(1+\varrho/2)q}  (1+T^{\theta_2})^{q/2}
    \|u_0\|_{W_s^{\vec p}}^q.
\end{align*}
This completes the proof.
\end{proof}

\begin{Remark}\upshape
\mbox{}
\begin{enumerate}
\item
Whenever $p_{\lfloor s\rfloor}
=p_{\lceil s\rceil} = 2$,
we have $\varrho = 0$. In this case,
the local estimate
(\ref{eq:u:q}) is valid for all
$0<q<1$, which coincides with
\cite[Lemma 2.1]{FeffermanMcCormickRobinsonRodrigo2017}.

Moreover, (\ref{eq:u:e0b}),
(\ref{eq:u:e0c}) and (\ref{eq:u:q}) now turn out to be
\begin{align*}
 &\int_0^T \|u(t,\cdot)\|_{W_{s+1}^{(\vec p, 2)}}^2 \rmd t
 \le C_{s,\vec p,N} (1+T)\|u_0\|_{W_s^{\vec p}}^2,
 \\
   &\int_0^T t\|u(t,\cdot)\|_{W_{s+2}^{(\vec p, 2, 2)}}^2 \rmd t
  \le C_{s,\vec p,N} (1+T^2)\|u_0\|_{W_s^{\vec p}}^2,
 \\
&   \int_0^T \|u(t,\cdot)\|_{W_{s+2}^{(\vec p, 2,2)}}^q
  \rmd t
  \le C_{s,\vec p,N} T^{1-q}(1+T^2)^{q/2}
  \|u_0\|_{W_s^{\vec p}}^q,\quad 0<q<1.
\end{align*}

\item
Theorem~\ref{thm:HeatEquation}
extends the local estimates for initial data in
 classical Sobolev spaces $H^s$.
For example,
consider the initial data
\[
  u_0(x) = \frac{1}{(1+|x|^2)^{\delta/2}}.
\]
When $\max\{0, N/2-1\} < \delta<N/2$
and $1<s<2$,
$u_0\in W_s^{(p_0,2,2)}$ for all $p_0 >  N/\delta  $.
Now  Theorem~\ref{thm:HeatEquation}
gives local energy estimates for
the heat equation with initial data $u_0$,
while
one has no local estimates
with classical Sobolev spaces since $u_0\not\in L^2$.
\end{enumerate}
\end{Remark}

\subsection{Convergence of Schr\"odinger Operators}

In this  subsection, we study the convergence of Schr\"odinger operators.

Take some function $\varphi$ such that
$\hat \varphi \in C_c^{\infty}$, $0\le \hat\varphi(\omega)\le 1$,
$\hat \varphi(\omega)=1$ for $|\omega|<1$
and $\hat \varphi(\omega)=0$ for $|\omega|>2$.
Set $\hat f_1 = \hat \varphi\cdot \hat f$ and $f_2 = f - f_1$.

Note that $(f*\varphi)\,\hat{} =  \hat \varphi\hat f$ (for a proof, see
\cite[Theorem 7.19]{Rudin1991}). Hence
$f_1 = f*\varphi$ and $f_2 = f - f*\varphi$.
Now we rewrite $e^{it(-\Delta)^{a/2}}$ as
\begin{equation}\label{eq:s:phi}
  e^{it(-\Delta)^{a/2}}f = e^{it(-\Delta)^{a/2}}f_1  + e^{it(-\Delta)^{a/2}}f_2.
\end{equation}
If $f\in H^s$, then
$ \hat f  $ is locally integrable. Hence  the convergence for
\[
 \lim_{t\rightarrow 0}
   e^{it(-\Delta)^{a/2}}f_1(x)
 = \lim_{t\rightarrow 0}  \frac{1}{(2\pi)^N} \int_{\bbR^N} e^{ i(x\cdot\omega + t |\omega|^a )}
     \hat f_1(\omega) \rmd \omega
\]
is obvious.

However, for $f\in W_s^{\vec p}$ with $p_0>2$,
we do not know whether $\hat f$ is locally integrable.
So we have to deal with the term $e^{it(-\Delta)^{a/2}}f_1$
with new method.

For the case $p_{\lceil s\rceil} =  2$, we show that
if $e^{it(-\Delta)^{a/2}}f$ is convergent as $t$ tends to zero
for all functions in $H^s$ for some $s>0$,
then the same is true for all
functions in $W_s^{\vec p}$ with the same index $s$.

\begin{Theorem}\label{thm:convergence:2}
Let $s>0$, $a>1$, $\vec p = (p_0,\ldots, p_{\lceil s\rceil})$
with $p_{\lceil s\rceil} =  2$
and $1\le p_l<\infty$ for all $0\le l\le \lceil s\rceil-1$.
Suppose that
for all functions $f\in H^s(\bbR^N)$,
\begin{equation}\label{eq:Schr:1:a}
\lim_{t\rightarrow 0}e^{it(-\Delta)^{a/2}}f(x) = f(x),\quad a.e.
\end{equation}
We have:
\begin{enumerate}
\item If $0<s<N/2$,
then (\ref{eq:Schr:1:a}) is valid for all functions
$f\in W_s^{\vec p}$.

\item If $s\ge N/2$ and $a=2$, then
(\ref{eq:Schr:1:a}) is also true if we  interpret
the operator
$e^{-it\Delta}$ as
\begin{equation}\label{eq:Delta}
  e^{-it\Delta} f(x)=\lim_{\varepsilon\rightarrow 0}
      \frac{1}{(2\pi)^N} \int_{\bbR^N} e^{ i(x\cdot\omega + t |\omega|^2 )}
      \Big( 1- \varphi_0(\frac{\omega}{\varepsilon})\Big)\hat f(\omega) \rmd \omega,
\end{equation}
where $\varphi_0\in C_c^{\infty}(\bbR)$
satisfies that $\varphi_0(x)=1$ for $|x|<1$
and $\varphi_0(x)=0$ for $|x|>2$,
the limit exists almost everywhere,
and the limit is independent of  the choice  of $\varphi_0$ with the aforementioned properties.
\end{enumerate}
\end{Theorem}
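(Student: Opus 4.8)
The plan is to decompose $f = f_1 + f_2$ as in (\ref{eq:s:phi}), with $f_1 = f*\varphi$ (so that $\widehat{f_1} = \hat\varphi\,\hat f$ is supported in $\{|\omega|\le 2\}$) and $f_2 = f - f_1$ (so that $\widehat{f_2} = (1-\hat\varphi)\hat f$ is supported in $\{|\omega|\ge 1\}$), then to reduce the high-frequency piece $f_2$ to the assumed convergence on $H^s$ and treat the low-frequency piece $f_1$ by a direct argument. The first step is to show $f_2\in H^s$. Because $p_{\lceil s\rceil}=2$, the top-order contribution to $\|f\|_{W_s^{\vec p}}$ is an $L^2$-quantity, namely $\sum_{|\alpha|=\lfloor s\rfloor}[D^\alpha f]_{W_{\nu_s}^2}$ when $s\notin\bbZ$ (or $\sum_{|\alpha|=s}\|D^\alpha f\|_{L^2}$ when $s\in\bbZ$). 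Combining Proposition~\ref{prop:p1} --- whose hypotheses do not require $f\in L^p$ --- with Plancherel's theorem applied to one-dimensional differences, finiteness of that term forces $\int_{|\omega|\ge 1}|\omega|^{2s}\,|\hat f(\omega)|^2\,\rmd\omega<\infty$; the cut at $\{|\omega|\ge 1\}$ is essential because near the origin $\hat f$ need not be a function. Since $\widehat{f_2}$ lives in $\{|\omega|\ge 1\}$, where $(1+|\omega|^2)^{s/2}\approx|\omega|^s$ and $|1-\hat\varphi|\le 1$, this gives $f_2\in L^2$ with $\|f_2\|_{H^s}\lesssim\|f\|_{W_s^{\vec p}}$, so the hypothesis (\ref{eq:Schr:1:a}) applied to $f_2$ yields $e^{it(-\Delta)^{a/2}}f_2\to f_2$ a.e.\ as $t\to 0$.

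The second step treats $f_1$. As a Fourier multiplier, $e^{it(-\Delta)^{a/2}}$ commutes with convolution, so $e^{it(-\Delta)^{a/2}}f_1 = f*\psi_t$ with $\psi_t:=e^{it(-\Delta)^{a/2}}\varphi$ and $\widehat{\psi_t}=e^{it|\omega|^a}\hat\varphi$; this identity is also how I would take the low-frequency part of $e^{it(-\Delta)^{a/2}}f$ to be \emph{defined} when $\hat f$ is merely a tempered distribution (the case $p_0>2$). I would then show $\psi_t\to\varphi$ in $L^{p_0'}(\bbR^N)$ as $t\to 0$: the difference $\widehat{\psi_t-\varphi}=(e^{it|\omega|^a}-1)\hat\varphi$ is supported in $\{|\omega|\le 2\}$ and bounded by $2^a t$, and --- here is where $a>1$ is used --- for every multi-index $\beta$ with $|\beta|\le N+1$ the derivative $\partial_\omega^\beta|\omega|^a$ is homogeneous of degree $a-|\beta|>-N$, hence locally integrable, so $\|\partial_\omega^\beta\widehat{\psi_t-\varphi}\|_{L^1}\lesssim t$; this forces $|(\psi_t-\varphi)(x)|\lesssim t\,(1+|x|)^{-(N+1)}$ and therefore $\|\psi_t-\varphi\|_{L^{p_0'}}\to 0$ (note $p_0'>1$ since $p_0<\infty$). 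Consequently $\|e^{it(-\Delta)^{a/2}}f_1-f_1\|_{L^\infty}=\|f*(\psi_t-\varphi)\|_{L^\infty}\le\|f\|_{L^{p_0}}\|\psi_t-\varphi\|_{L^{p_0'}}\to 0$, so $e^{it(-\Delta)^{a/2}}f_1\to f_1$ uniformly; for $a=2$ this step is immediate since $|\omega|^2$ is smooth.

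For part (i) ($0<s<N/2$, $a>1$) one now sets $e^{it(-\Delta)^{a/2}}f := f*\psi_t + e^{it(-\Delta)^{a/2}}f_2$ as in (\ref{eq:s:phi}); this agrees with the oscillatory-integral formula from the introduction whenever the latter is meaningful and is independent of the cutoff $\varphi$, since changing $\varphi$ alters $f_1$ by a function whose Fourier transform is supported in an annulus away from the origin, i.e.\ (by Step~1) by an $H^s$ function. Adding the two limits gives $e^{it(-\Delta)^{a/2}}f\to f_1+f_2=f$ a.e. For part (ii) ($s\ge N/2$, $a=2$) one argues with the regularized operator (\ref{eq:Delta}): with $f=f_1+f_2$ as above, for $\varepsilon<1/2$ the factor $1-\varphi_0(\cdot/\varepsilon)$ equals $1$ on $\{|\omega|\ge 1\}\supset\supp\widehat{f_2}$, so the $f_2$-contribution to (\ref{eq:Delta}) is independent of $\varepsilon$ and equals $e^{-it\Delta}f_2$ (for which (\ref{eq:Schr:1:a}) applies since $f_2\in H^s$); the $f_1$-contribution is $f*\big[(1-\varphi_0(\cdot/\varepsilon))e^{it|\omega|^2}\hat\varphi\big]^{\vee}$, and removing $\varphi_0(\cdot/\varepsilon)$ changes the convolution kernel only on a ball of radius $O(\varepsilon)$, producing an $L^{p_0'}$-error of size $O(\varepsilon^{N/p_0})$, so this contribution converges uniformly as $\varepsilon\to 0$ to $f*\psi_t$. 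Hence the $\varepsilon$-limit in (\ref{eq:Delta}) exists a.e., it is independent of $\varphi_0$ (two admissible choices differ only on a shrinking ball about the origin), and letting $t\to 0$ and invoking Steps~1--2 gives (\ref{eq:Schr:1:a}).

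The principal difficulty is not analytic but definitional: when $p_0>2$ the datum $f$ need not lie in $L^2$ and $\hat f$ is only a tempered distribution, so the bare Fourier-integral definition of $e^{it(-\Delta)^{a/2}}f$ must be replaced --- by the convolution formula $f*\psi_t$ on the low-frequency piece and, for $s\ge N/2$, by the regularization (\ref{eq:Delta}) --- after which the consistency of these definitions and their independence of the auxiliary cutoffs $\varphi$ and $\varphi_0$ has to be checked. A secondary technical point, already flagged in the first step, is justifying that the Gagliardo (equivalently, one-directional difference) seminorm of $D^\alpha f$ controls $|\omega|^s\hat f$ on $\{|\omega|\ge 1\}$ even though $D^\alpha f$ is only assumed to lie in $L^{p_{\lfloor s\rfloor}}$ rather than in $L^2$.
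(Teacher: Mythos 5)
Your proposal is correct and shares the paper's decomposition $f = f_1 + f_2$ (via the cutoff $\hat\varphi$) and the reduction of $f_2$ to the $H^s$ hypothesis, but it treats the low-frequency piece $f_1$ by a genuinely different mechanism. The paper shows in case (i) ($s<N/2$) that $\hat f_1\in L^1$ (since $\hat f=h/|\omega|^s$ with $h\in L^2$ by Lemma~\ref{Lm:frac:Sobolev} and $|\omega|^{-s}1_{\{|\omega|<2\}}\in L^2$ when $s<N/2$), so $e^{it(-\Delta)^{a/2}}f_1\to f_1$ pointwise by dominated convergence; and in case (ii) it identifies the $\varepsilon$-limit with the tempered-distribution pairing $\langle\hat f, e^{i(x\cdot\omega+t|\omega|^2)}\hat\varphi\rangle$ (which needs $a=2$ so that $e^{it|\omega|^2}\hat\varphi$ is Schwartz) and passes to the limit $t\to 0$ in $\mathscr S$. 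You instead set $\psi_t = (e^{it|\omega|^a}\hat\varphi)^\vee$ and prove a quantitative kernel estimate $\|\psi_t-\varphi\|_{L^{p_0'}}\lesssim t$ via $N{+}1$ derivatives of $(e^{it|\omega|^a}-1)\hat\varphi$, which uses $a>1$ to keep $\partial_\omega^\beta|\omega|^a$ locally integrable for $|\beta|\le N+1$; then $e^{it(-\Delta)^{a/2}}f_1 = f*\psi_t\to f*\varphi = f_1$ \emph{uniformly} by H\"older against $f\in L^{p_0}$, and the same estimate controls the $\varepsilon$-regularization in case (ii). This is a more unified route (the same mechanism handles both $s<N/2$ and $s\ge N/2$), gives the strictly stronger conclusion of uniform convergence for the low-frequency piece, and would in principle allow $a>1$ even in part (ii); the paper's argument is shorter for part (i) and cleaner for part (ii) precisely because it invokes the $\mathscr S$-duality rather than a kernel bound.

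Two small points worth flagging. First, your Step~1 appeals to ``Proposition~\ref{prop:p1} plus Plancherel on one-dimensional differences'' to conclude $|\omega|^s\hat f\in L^2(\{|\omega|\ge 1\})$; this is exactly what the paper's Lemma~\ref{Lm:frac:Sobolev} supplies (the subtlety being to justify that $\hat f$ is a genuine function away from the origin when $p_0>2$, done in that lemma via the factors $1-\phi_j(a\cdot)$), so your sketch is sound but is in effect reproving that lemma; it should be cited or spelled out. Second, the claim that removing $\varphi_0(\cdot/\varepsilon)$ ``changes the convolution kernel only on a ball of radius $O(\varepsilon)$'' is loosely worded — the change is in Fourier support, not in physical space — but the conclusion $\|[\varphi_0(\cdot/\varepsilon)e^{it|\omega|^2}\hat\varphi]^\vee\|_{L^{p_0'}}=O(\varepsilon^{N/p_0})$ is correct, e.g.\ by bounding $|[\cdot]^\vee(x)|\lesssim \varepsilon^N(1+\varepsilon|x|)^{-(N+1)}$ from derivatives of the Fourier-side cutoff.
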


For the case $1< p_{\lceil s\rceil} <  2$
and $a=2$, we have similar results.

\begin{Theorem}\label{thm:convergence:p}
Let $s>0$,  $\vec p = (p_0,\ldots, p_{\lceil s\rceil})$
with $1<p_{\lceil s\rceil}< 2$
and $1\le p_l<\infty$ for all $0\le l\le \lceil s\rceil-1$.
Let
\begin{equation}\label{eq:beta s}
  \beta_s = \begin{cases}
    s,  & s\in\bbZ, \\
\lfloor s\rfloor + \nu_s(p_{\lceil s\rceil}-1), & s\not\in\bbZ.
  \end{cases}
\end{equation}
We have:
\begin{enumerate}
\item If
\begin{equation}\label{eq:Schr:5}
  \frac{1}{p_{\lceil s\rceil}} -\frac{N}{2(N+1)}< \frac{\beta_s}{N}
   < \frac{1}{p_{\lceil s\rceil}},
\end{equation}
then   for any
$f\in W_s^{\vec p}$,
\begin{equation}\label{eq:Schr:1}
\lim_{t\rightarrow 0}e^{-it\Delta}f(x) = f(x),\quad a.e.
\end{equation}

\item If $\beta_s  \ge N/p_{\lceil s\rceil}$, then
(\ref{eq:Schr:1}) is also true if we interpret
the operator
$e^{-it\Delta}$ as in (\ref{eq:Delta}).
\end{enumerate}
\end{Theorem}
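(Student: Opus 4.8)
The plan is to split $f=f_1+f_2$ as in $(\ref{eq:s:phi})$, where $f_1=f*\varphi$ has Fourier transform supported in $\{|\omega|\le 2\}$ and $f_2=f-f*\varphi$ has Fourier transform supported in $\{|\omega|\ge 1\}$; since $\varphi\in\mathscr S$, both pieces lie in $W_s^{\vec p}$ with norms controlled by $\|f\|_{W_s^{\vec p}}$, and the delicate feature is that $f$ itself need not belong to $L^2$ when $p_0>2$. For the low-frequency piece I would argue as in the proof of Theorem~\ref{thm:convergence:2}: one has $e^{-it\Delta}f_1=(e^{-it\Delta}\varphi)*f$, and since $\widehat{e^{-it\Delta}\varphi}(\omega)=e^{it|\omega|^2}\hat\varphi(\omega)\to\hat\varphi(\omega)$ in $C_c^{\infty}$ as $t\to0$, we get $e^{-it\Delta}\varphi\to\varphi$ in $\mathscr S$; writing $(\psi*f)(x)=\langle f,\tau_x\tilde\psi\rangle$ for $\psi\in\mathscr S$ and using continuity of the tempered distribution $f$, it follows that $e^{-it\Delta}f_1(x)\to f_1(x)$ for \emph{every} $x$. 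This argument uses only that $f$ is a tempered distribution, hence covers the range $p_0>2$ where $\hat f$ need not be locally integrable, and it is the only mechanism needed for the low-frequency part.

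The heart of the matter is to show, under hypothesis $(\ref{eq:Schr:5})$, that $f_2\in H^{s'}(\bbR^N)$ for some $s'>\tfrac12-\tfrac1{2(N+1)}$ (which is $\ge\tfrac14$); granting this, the a.e.\ convergence $e^{-it\Delta}f_2\to f_2$ is the sharp pointwise convergence theorem of Carleson~\cite{Carleson1980} for $N=1$, of Du--Guth--Li~\cite{DuGuthLi2017} for $N=2$, and of Du--Zhang~\cite{DuZhang2019} for $N\ge3$, and combining with the previous paragraph gives $(\ref{eq:Schr:1})$. Producing such an $s'$ amounts to converting the $L^{p_{\lceil s\rceil}}$-smoothness of the top-order derivatives of $f_2$ into $L^2$-smoothness, and this is where $p_{\lceil s\rceil}<2$ is essential. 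For non-integer $s$ and $|\alpha|=\lfloor s\rfloor$, the Gagliardo seminorm can be written as $[D^\alpha f_2]_{W_{\nu_s}^{p_{\lceil s\rceil}}}^{p_{\lceil s\rceil}}=\int_{\bbR^N}\|D^\alpha f_2(\cdot+h)-D^\alpha f_2\|_{L^{p_{\lceil s\rceil}}}^{p_{\lceil s\rceil}}|h|^{-N-\nu_s p_{\lceil s\rceil}}\,\rmd h\lesssim\|f\|_{W_s^{\vec p}}^{p_{\lceil s\rceil}}$ (convolution with $\varphi\in\mathscr S$ does not enlarge the seminorm). Applying the Hausdorff--Young inequality (legitimate since $p_{\lceil s\rceil}\le2$) to $\widehat{D^\alpha f_2(\cdot+h)-D^\alpha f_2}(\omega)=(e^{ih\cdot\omega}-1)\widehat{D^\alpha f_2}(\omega)$, then Minkowski's integral inequality to move the $L^{p_{\lceil s\rceil}}$-norm in $h$ inside the $L^{p_{\lceil s\rceil}'}$-norm in $\omega$, together with the elementary scaling identity $\int_{\bbR^N}|e^{ih\cdot\omega}-1|^{p_{\lceil s\rceil}}|h|^{-N-\nu_s p_{\lceil s\rceil}}\,\rmd h=c\,|\omega|^{\nu_s p_{\lceil s\rceil}}$, yields $\big\||\omega|^{\nu_s}\widehat{D^\alpha f_2}\big\|_{L^{p_{\lceil s\rceil}'}}\lesssim\|f\|_{W_s^{\vec p}}$. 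Since $\widehat{D^\alpha f_2}$ is supported in $\{|\omega|\ge1\}$, a H\"older inequality against a negative power of $|\omega|$ (integrable exactly when one stays strictly below the critical exponent) upgrades this to $D^\alpha f_2\in\dot H^{\nu'}$; summing over $|\alpha|=\lfloor s\rfloor$ and controlling the lower-order derivatives and the $L^2$-norm by the high-frequency support together with the embedding theorems proved earlier in the paper, one obtains $f_2\in H^{s'}$ for every $s'<\beta_s-N\big(\tfrac1{p_{\lceil s\rceil}}-\tfrac12\big)$; for integer $s$ one runs the same chain starting from $\|D^\alpha f_2\|_{L^{p_{\lceil s\rceil}}}$, $|\alpha|=s$. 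A short computation then shows that the first inequality in $(\ref{eq:Schr:5})$ is precisely the assertion $\beta_s-N(\tfrac1{p_{\lceil s\rceil}}-\tfrac12)>\tfrac12-\tfrac1{2(N+1)}$, while the second inequality $\beta_s/N<1/p_{\lceil s\rceil}$ keeps us in the range where this argument produces a positive exponent $s'$ and $\hat f$ can be handled without further regularization; this settles (i).

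For (ii), when $\beta_s\ge N/p_{\lceil s\rceil}$ the embedding theorems established above show that $f$ is continuous and bounded, but $f$ may still fail to be square integrable when $p_0>2$, so $\hat f$ is a genuine tempered distribution that need not be locally integrable near the origin and the plain oscillatory integral defining $e^{-it\Delta}f$ is meaningless; this is why one passes to the regularized operator $(\ref{eq:Delta})$. The high-frequency piece is treated as before (now $s'\ge N/2$, a fortiori above the threshold), so $e^{-it\Delta}f_2\to f_2$ a.e.; for the low-frequency piece one applies the argument of the first paragraph to $\mathcal F^{-1}[(1-\varphi_0(\cdot/\varepsilon))\hat f_1]$, noting that the inner limit $\varepsilon\to0$ exists and that the difference between two admissible cut-offs $\varphi_0$ has compactly supported Fourier transform, hence is disposed of exactly as $f_1$ was; this also shows the limit is independent of $\varphi_0$ and tends to $f_1(x)$ as $t\to0$.

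The hard part will be the high-frequency step: there is no Fourier-side description of the Gagliardo seminorm when $p_{\lceil s\rceil}\neq2$, so the passage to $L^2$-Sobolev regularity of $f_2$ must go through the Hausdorff--Young, Minkowski, high-frequency-H\"older chain above, with the exponent bookkeeping tuned so that the numerology of $(\ref{eq:Schr:5})$ and of $\beta_s$ in $(\ref{eq:beta s})$ comes out correctly. The secondary difficulty, pervasive throughout, is that $f$ is allowed to lie outside $L^2$, which is what forces the distributional (Paley--Wiener) treatment of the low-frequency piece and, in the regime (ii), the regularized definition $(\ref{eq:Delta})$.
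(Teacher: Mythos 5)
Your proof takes a genuinely different route from the paper's. For the high-frequency piece $f_2$, the paper first establishes Lemma~\ref{Lm:frac:Sobolev}, which represents $\hat f$ away from the origin by a function $\lambda$ with $|\omega|^{\beta_s}\lambda\in L^{p'_{\lceil s\rceil}}$, and then does the high-frequency H\"older step (\ref{eq:Schr:6}); you bypass Lemma~\ref{Lm:frac:Sobolev} entirely and read off Fourier decay of $f_2$ directly from the Gagliardo seminorm by applying Hausdorff--Young to $(e^{ih\cdot\omega}-1)\widehat{D^{\alpha}f_2}$, followed by Minkowski's integral inequality (valid since $p_{\lceil s\rceil}\le p'_{\lceil s\rceil}$) and the radial scaling identity for $\int_{\bbR^N}|e^{ih\cdot\omega}-1|^{p_{\lceil s\rceil}}|h|^{-N-\nu_s p_{\lceil s\rceil}}\,\rmd h$. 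For the low-frequency piece you use the distributional pairing of $f$ with the Schwartz function $e^{i(x\cdot\omega+t|\omega|^2)}\hat\varphi(\omega)$ uniformly across both cases, whereas the paper argues by dominated convergence (using $\hat f_1\in L^1$, which is precisely where the upper bound $\beta_s/N<1/p_{\lceil s\rceil}$ in (\ref{eq:Schr:5}) enters) in part (i) and switches to the pairing argument only in part (ii); your uniform treatment is cleaner and equally valid.

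One point deserves flagging. Your chain, as written, delivers $\||\omega|^{\nu_s}\widehat{D^{\alpha}f_2}\|_{L^{p'_{\lceil s\rceil}}}\lesssim\|f\|_{W_s^{\vec p}}$ for $|\alpha|=\lfloor s\rfloor$; combined with $\sum_{|\alpha|=\lfloor s\rfloor}|\widehat{D^{\alpha}f_2}(\omega)|\approx|\omega|^{\lfloor s\rfloor}|\hat f_2(\omega)|$ on $\{|\omega|\ge 1\}$ and the high-frequency H\"older with $1/r=1/p_{\lceil s\rceil}-1/2$, this gives $f_2\in H^{s'}$ for every $s'<s-N(1/p_{\lceil s\rceil}-1/2)$, with $s=\lfloor s\rfloor+\nu_s$ (not $\beta_s$) on the right. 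You wrote $\beta_s$ in that final bound, but your own chain produced $s$. Since $\beta_s\le s$, with equality only when $p_{\lceil s\rceil}=2$ or $s\in\bbZ$, the stated hypothesis (\ref{eq:Schr:5}) remains more than sufficient and the proof of both (i) and (ii) stands, but you have in fact established the theorem under the weaker hypothesis obtained by replacing $\beta_s$ by $s$. The paper's smaller exponent $\nu_s(p_{\lceil s\rceil}-1)$ comes from display (\ref{eq:Schr:2}) in Lemma~\ref{Lm:frac:Sobolev}, where the Hausdorff--Young step compares $\int|g|^{p_1}$ to $\int|\hat g|^{p'_1}$ directly, without the compensating power $p_1/p'_1$; your Minkowski-based computation does not incur this loss, so the discrepancy between the two exponents is worth reconciling before either statement is cited. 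A minor gap you should also patch is the a priori identification of $\widehat{D^{\alpha}f_2}$ with a locally integrable function on $\{|\omega|\ge 1\}$, which the paper gets from Lemma~\ref{Lm:frac:Sobolev} and which you are implicitly using when you treat $\widehat{D^{\alpha}f_2}(\omega)$ as a measurable function of $\omega$ in the Minkowski step.
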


\begin{Remark}
If  $0<s<1$,  $p_0> N/\delta  >  p_1 =2 $
and $2(\delta+s)>N$,
then the function $f$ defined in Example~\ref{Ex:ex1}
satisfies that  $f\in W_s^{(p_0,2)} \setminus W_s^{(2,2)}$.
That is,
$W_s^{(p_0,2)}$ is not contained in $W_s^{(2,2)} = H^s$.
\end{Remark}

For the case $p_0>2$, the Fourier transform of  functions in
$W_s^{\vec p}$ might be  distributions.
In this case, we show that $\hat f$ is
the distributional limit of a sequence of locally
integrable functions. Moreover, we prove that
$\hat f$ is locally integrable if $ \beta_s /N< 1/p_{\lceil s\rceil}$.

\begin{Lemma}\label{Lm:frac:Sobolev}
Suppose that $s>0$ and $\vec p = (p_0,\ldots, p_{\lceil s\rceil})$ with
$1<p_{\lceil s\rceil}\le 2$
and $1\le p_l<\infty$ for $0\le l\le \lceil s\rceil-1$.
Let $\beta_s$ be defined by (\ref{eq:beta s}).
For any $f\in W_s^{\vec p}$,
$\hat f$ coincides with a function $\lambda$ in the domain
$\bbR^N\setminus\{0\}$
such that $|x|^{\beta_s} \lambda(x) \in L^{p'_{\lceil s\rceil}}$
and
$\hat f = \lim_{\varepsilon\rightarrow 0}
  (1 - \varphi_0(x/\varepsilon)) \lambda(x)$
in the distributional sense,
where
$\varphi_0 \in C^{\infty}(\bbR^N)$ satisfies that
$\varphi_0(x) =1$ for $|x|<1$ and $\varphi_0(x)=0$ for $|x|>2$.

Consequently, if $\beta_sp_{\lceil s\rceil}<N$,
then $\lambda$ is locally integrable and $\hat f  = \lambda$.
\end{Lemma}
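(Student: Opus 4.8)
The plan is to reduce everything to the first-order case $0<s\le 1$ by differentiation, and then build the function $\lambda$ explicitly using the function $g(t,x)$ from Lemma~\ref{Lm:Ls1}. First I would handle $s\not\in\bbZ$: for each multi-index $\alpha$ with $|\alpha|=\lfloor s\rfloor$ one has $D^{\alpha}f\in W_{\nu_s}^{(p_{\lfloor s\rfloor},p_{\lceil s\rceil})}$, and the Fourier transform relation $\widehat{D^{\alpha}f}(\xi)=(i\xi)^{\alpha}\hat f(\xi)$ shows that controlling $|\xi|^{\lfloor s\rfloor}\cdot|\xi|^{\nu_s(p_{\lceil s\rceil}-1)/?}\hat f$ — more precisely $|\xi|^{\beta_s}\hat f$ — follows once we prove the case $s\in(0,1)$ with the weight exponent $s(p_{\lceil s\rceil}-1)$... wait, let me re-examine: the target weight is $|x|^{\beta_s}$ with $\beta_s=\lfloor s\rfloor+\nu_s(p_{\lceil s\rceil}-1)$, so in the base case $s\in(0,1)$ we need $|\xi|^{s(p_1-1)}\hat f\in L^{p_1'}$. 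The case $s\in\bbZ$ is a direct consequence of $\widehat{D^{\alpha}f}\in L^{p_k'}$ for $|\alpha|=s$ (by Hausdorff–Young, since $p_k\le 2$) combined with $\widehat{D^{\alpha}f}=(i\xi)^{\alpha}\hat f$, so $|\xi|^s|\hat f(\xi)|$ is controlled on the region $|\xi|\ge 1$, and $\hat f=\hat f_1+\hat f_2$ with $\hat f_1$ compactly supported and integrable handles $|\xi|<1$.

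The heart of the matter is the base case $0<s\le 1$, $1<p_1\le 2$, which I would attack as follows. Let $g(t,x)$ be as in Lemma~\ref{Lm:Ls1} with $\varphi\equiv 1$ near $0$; then $g(0,x)=f(x)$ a.e. and $g(t,\cdot)\to 0$ in an averaged sense as $t\to\infty$ because of the cutoff. Taking Fourier transforms in $x$, $\hat g(t,\xi)=\varphi(t)\hat f(\xi)\prod_{j=1}^N m(t\xi_j)$ where $m(u)=(e^{iu}-1)/(iu)$; near $\xi=0$ (and $t$ bounded) the product is $\approx 1$, so one recovers $\hat f$ as a limit. The key identity (\ref{eq:s:e17}) from Lemma~\ref{Lm:L3a}, namely $g=((\nabla\Phi)E)*\nabla g+(\Phi\nabla E)*\nabla g-2(\nabla\Phi\cdot\nabla E)*g-((\Delta\Phi)E)*g$, expresses $g$ — hence $f=g(0,\cdot)$ — via convolutions of $\nabla g$ and $g$ against compactly supported kernels. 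On the Fourier side this becomes a representation of $\hat f$ as a sum of terms, each an integral over $t\in(0,A]$ of $\widehat{\nabla_x g}(t,\cdot)$ or $\hat g(t,\cdot)$ against fixed kernels; the weighted bound $\|t^{\alpha}\nabla g\|_{L^{p_1}(\mathbb R^{N+1})}\lesssim\|f\|_{W_s^{\vec p}}$ from (\ref{eq:s:11d}) (valid since $p_0\le p_1$ would be needed — but here $p_0$ can exceed $p_1$, so I would instead use (\ref{eq:s:11}) and (\ref{eq:s:11b})–(\ref{eq:s:11c}) separately, exactly as in the proof of Theorem~\ref{thm:embedding:fractional}) lets me apply Hausdorff–Young in $x$ fibrewise and then a one-dimensional Hardy-type / Minkowski estimate in $t$ to land in $L^{p_1'}(|x|^{\beta_s}\,dx)$.

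Concretely, I would define $\lambda(x)$ on $\mathbb R^N\setminus\{0\}$ by the explicit integral formula obtained by Fourier-transforming the right-hand side of (\ref{eq:s:e17}) and show: (a) for fixed $x\ne 0$ the integral converges absolutely, using that the kernels $E,\nabla E$ are integrable against the compact cutoffs away from a neighbourhood of the origin and that $|x|^{\beta_s}\lambda(x)\in L^{p_1'}$ by the weighted Hausdorff–Young argument above; (b) $\hat f=\lim_{\varepsilon\to0}(1-\varphi_0(\cdot/\varepsilon))\lambda$ in $\mathscr S'$, by pairing against a test function, splitting $f=f_1+f_2$ as in the main text — $\hat f_1$ is a genuine $C_c^\infty$ function so it contributes only near $0$ and is killed by $1-\varphi_0(\cdot/\varepsilon)$ in the limit, while $\hat f_2$ already agrees with a locally integrable function — and invoking dominated convergence. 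The main obstacle I expect is step (b): carefully justifying that the distribution $\hat f$ really equals the pointwise function $\lambda$ away from $0$ (not merely that they differ by something supported at $0$), which requires tracking the limit $t\to 0$ in the representation and checking that no spurious $\delta$-type mass at the origin appears — this is where the hypothesis that $\varphi\equiv1$ near $0$ (so $\nabla\Phi,\Delta\Phi$ vanish near $0$) is essential. The final "consequently" is then immediate: if $\beta_s p_{\lceil s\rceil}<N$ then $|x|^{-\beta_s}\in L^{p_{\lceil s\rceil}}(B(0,1))$, so Hölder gives $\lambda\in L^1(B(0,1))$, hence $\lambda$ is locally integrable, the approximants $(1-\varphi_0(\cdot/\varepsilon))\lambda$ converge to $\lambda$ in $L^1_{loc}$ and therefore in $\mathscr S'$, forcing $\hat f=\lambda$.
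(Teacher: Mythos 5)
Your proposal diverges from the paper in the base case $0<s\le 1$, where the paper argues via finite differences: for a.e.\ $a$ one has $f-f(\cdot+ae_j)\in L^{p_1}$, so $(1-\phi_j(a\cdot))\hat f$ agrees with an $L^{p_1'}$ function $g_a$ by Hausdorff--Young; dividing by $\sum_j|1-\phi_j(a\cdot)|^2$ (which is nonvanishing on $0<|x|<1/a$) yields $\lambda_a$, these $\lambda_a$ agree on overlaps, and integrating the Hausdorff--Young bound against $|a|^{-1-p_1 s}\,\rmd a$ produces the weight $|x_j|^{s(p_1-1)}$ directly (formula (\ref{eq:Schr:2})). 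The cases $s\in\bbZ$ and $s\not\in\bbZ$, $s>1$, are then handled as you suggest, by dividing $\sum_\alpha h_\alpha\psi_\alpha$ by $\sum_\alpha\psi_\alpha^2$. Your reduction to the base case is the same, but your proposed engine — Fourier-transforming the Riesz-type identity (\ref{eq:s:e17}) from Lemma~\ref{Lm:L3a} and running a ``weighted Hausdorff--Young plus Hardy'' bound — is a genuinely different route, and it has two unresolved gaps.

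First, (\ref{eq:s:e17}) is a convolution identity in $\bbR^{N+1}$ and you need the trace at $t=0$ of each term, Fourier-transformed in $x$ only; this is a $t$-integral of products, not a clean Fourier multiplier, and you would have to show that the resulting kernel integrals converge and that the weighted bound $\|t^\alpha\nabla g\|_{L^{p_1}(\bbR^{N+1})}\lesssim\|f\|_{W_s^{\vec p}}$ actually transfers to the weight $|\xi|^{\beta_s}$ on the Fourier side. That transfer is not formal: Hausdorff--Young controls $\|\widehat{\nabla_x g}(t,\cdot)\|_{L^{p_1'}}$ by $\|\nabla_x g(t,\cdot)\|_{L^{p_1}}$, but the available estimate is on $\int t^{\alpha p_1}\|\nabla_x g(t,\cdot)\|_{L^{p_1}}^{p_1}\rmd t$, while what you would need to sum over $t$ after Hausdorff--Young involves the power $p_1'$, and since $p_1\le 2<p_1'$ these exponents do not match up without an extra argument. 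You would need to make this step concrete before the claimed weight $|x|^{\beta_s}$ appears; the finite-difference route avoids this entirely because the correct weight falls out of a single change of variables in $a$.

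Second, in step (b) you assert that ``$\hat f_1$ is a genuine $C_c^\infty$ function.'' This is circular: $\hat f_1=\hat\varphi\,\hat f$ is a priori only a compactly supported \emph{distribution}, because $\hat f\in\mathscr S'$ need not be a function — establishing that $\hat f$ is represented by a function away from the origin is precisely what the lemma asserts. (The paper does use the decomposition $f=f_1+f_2$ with $\hat f_1$ treated as a function, but only \emph{after} Lemma~\ref{Lm:frac:Sobolev} has been proved; inside the proof itself no such decomposition is used.) The distributional limit $\hat f=\lim_\varepsilon(1-\varphi_0(\cdot/\varepsilon))\lambda$ in the paper is obtained instead by showing directly that $\langle\varphi_0(\cdot/\varepsilon)\hat f,\varphi\rangle\to 0$ using only $f\in L^{p_0}$ and Young's inequality, with no reference to $\hat f_1$. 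You should replace the $f_1,f_2$ argument with this direct estimate, or with an analogous one.

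The final ``consequently'' paragraph is correct and matches the paper.
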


\begin{proof}
First, we show that there is some function $\lambda$ such that $|x|^{\beta_s} \lambda(x)\in L^{p'_{\lceil s\rceil}}$
and
\begin{equation}\label{eq:s:7}
  \langle \hat f, \varphi \rangle  = \int_{\bbR^N} \lambda (x)  \varphi(x)\rmd x,
  \qquad \forall \varphi\in C_c^{\infty}(\bbR^N\setminus\{0\}).
\end{equation}

There are three cases:

\medskip\noindent
(A1)\, $0<s<1$.
\medskip

In this case, $\vec p= (p_0, p_1)$.
We see from Proposition~\ref{prop:p1} that
\[
  \iint_{\bbR^N\times\bbR^N} \frac{|f(x)-f(y)|^{p_1}}{|x-y|^{N+p_1s}} \rmd x\,\rmd y<\infty
\]
is equivalent to
\begin{equation}\label{eq:s:4}
\iint_{\bbR^N\times\bbR} \frac{|f(x)-f(x+ae_j)|^{p_1}}{|a|^{1+p_1s}} \rmd x\, \rmd a<\infty,
\qquad \forall 1\le  j\le N.
\end{equation}
Applying Fubini's theorem, we get that for almost all real numbers $a$,
$f - f(\cdot + ae_j)\in L^{p_1}$.
It is easy to see that $f(\cdot -ae_j)\,\hat{} = \phi_j(a\cdot ) \hat f$ in the distributional
sense, where
\[
  \phi_j(x) = e^{- i x_j} .
\]
Hence
\[
  (f - f(\cdot + ae_j))\, \hat{} = (1 - \phi_j(a\cdot )) \hat f.
\]
Since $f - f(\cdot + ae_j)\in L^{p_1}$ for almost all $a>0$,
by the Hausdorff-Young  inequality,
there exist functions $g_a\in L^{p'_1}$
such that
\[
  (f - f(\cdot + ae_j))\, \hat{}  =    g_a.
\]
Hence for  any $\varphi\in C_c^{\infty} (B(0,1/a)\setminus\{0\})$,
\begin{align*}
\langle (1 - \phi_j(a\cdot )) \hat f , \varphi\rangle
  &=   \int_{\bbR^N}  g_a(x) \varphi(x)\rmd x.
\end{align*}
Therefore,
\begin{align*}
\langle |1 - \phi_j(a\cdot )|^2 \hat f , \varphi\rangle
&= \langle (1 - \phi_j(a\cdot )) \hat f , (1 - \phi_j(a\cdot ))^*\varphi\rangle
  &=   \int_{\bbR^N} (1 - \phi_j(ax ))^* g_a(x) \varphi(x)\rmd x,
\end{align*}
where $z^*$ denotes the conjugate of a complex number $z$.
It follows that
\begin{align*}
\Big\langle \sum_{j=1}^N
  |1 - \phi_j(a\cdot )|^2 \hat f , \varphi\Big\rangle
  &=   \int_{\bbR^N} \sum_{j=1}^N(1 - \phi_j(ax ))^* g_a(x) \varphi(x)\rmd x,
\end{align*}
Set
\begin{equation}\label{eq:s:3}
\lambda_a(x) = \frac{\sum_{j=1}^N(1 - \phi_j(ax ))^* g_a(x)}
   {\sum_{j=1}^N
  |1 - \phi_j(ax )|^2 }.
\end{equation}
Note that $\sum_{j=1}^N
  |1 - \phi_j(a\cdot )|^2 $ has no zero in the area $0<|x|<1/a$.
Hence
\[
\frac{\varphi }{  \sum_{j=1}^N
  |1 - \phi_j(a\cdot )|^2} \in C_c^{\infty}(\bbR^N\setminus\{0\}) .
  \]
It follows that
\[
  \Big \langle\sum_{j=1}^N
  |1 - \phi_j(a\cdot )|^2 \hat f , \frac{\varphi}{\sum_{j=1}^N
  |1 - \phi_j(a\cdot )|^2}\Big \rangle
  = \int_{\bbR^N} \lambda_a(x)  \varphi(x)\rmd x.
\]
On the other hand, 
\begin{align*}
\Big \langle\sum_{j=1}^N
  |1 - \phi_j(a\cdot )|^2 \hat f , \frac{\varphi}{\sum_{j=1}^N
  |1 - \phi_j(a\cdot )|^2}\Big \rangle
&= \langle \hat f, \varphi \rangle  .
\end{align*}
Hence
\begin{equation}\label{eq:s:6}
  \langle \hat f, \varphi \rangle  = \int_{\bbR^N} \lambda_a(x)  \varphi(x)\rmd x,
  \qquad \forall \varphi\in C_c^{\infty} (B(0,1/a)\setminus\{0\}).
\end{equation}
That is, $\hat f$ coincides with a function $\lambda_a$ in the domain $B(0,1/a)\setminus\{0\}$.
By the uniqueness, we have
\[
  \lambda_{a}(x) = \lambda_{a'}(x),\qquad a'<a, 0<|x|<\frac{1}{a}.
\]
Take a sequence $\{a_n:\, n\ge 1\}$ such that
$a_n\rightarrow 0$. Then the limit
\[
  \lambda(x) := \lim_{n\rightarrow \infty} \lambda_{a_n}(x)
\]
exists for almost all $x\ne 0$.
Now we see from (\ref{eq:s:6}) that
 (\ref{eq:s:7}) is true.

Since $f-f(\cdot +ae_j)\in L^{p_1}$, we see from the above equation that for all $a\in\bbR$,
\[
  \Big(f-f(\cdot +ae_j)\Big)\,\hat{}\, (x)
  = \lambda (x) (1 - \phi_j(ax)),\qquad \forall \varepsilon<|x|<A.
\]
Applying the Hausdorff Young  inequality, we get
\begin{align}
&\iint_{\bbR^N\times\bbR} \frac{|f(x)-f(x+ae_j)|^{p_1}}{|a|^{1+p_1s}} \rmd x\, \rmd a
    \nonumber \\
&\ge
  \iint_{\bbR^N\times\bbR} \frac{|(f-f(\cdot +ae_j))\, \hat{}(x)|^{p'_1}}
    {|a|^{1+p_1s}} \rmd x\, \rmd a
    \nonumber \\
&= \lim_{\substack{\varepsilon\rightarrow 0 \\
A\rightarrow \infty}}\int_{\varepsilon<|x|<A}  \int_{\bbR}\frac{|(f-f(\cdot +ae_j))\,
\hat{}(x)|^{p'_1}}{|a|^{1+p_1s}} \rmd a  \,  \rmd x
    \nonumber \\
&=\lim_{\substack{\varepsilon\rightarrow 0 \\
A\rightarrow \infty}} \int_{\varepsilon<|x|<A}   \int_{\bbR}
           \frac{|\lambda (x) (1 - \phi_j(ax))|^{p'_1}}
             {|a|^{1+p_1s}}  \rmd a  \,  \rmd x  \nonumber   \\
&= \int_{\bbR^N} \int_{\bbR}
    \frac{|\lambda (x)|^{p'_1}
       |2\sin ( ax_j/2)|^{p'_1}}{|a|^{1+p_1s}}  \rmd a \,  \rmd x   \nonumber \\
&= \int_{\bbR^N}   |x_j|^{p_1s} |\lambda (x)|^{p'_1}
    \int_{\bbR} \frac{| 2\sin (a/2)|^{p'_1} }{|a|^{1+p_1s }}
\rmd a  \,  \rmd x \nonumber  \\
&= C_{s,j} \int_{\bbR^N} |x_j|^{p_1s} |\lambda (x)|^{p'_1} \rmd x  . \label{eq:Schr:2}
\end{align}
Hence
\begin{equation}\label{eq:s:e16}
  \int_{\bbR^N}   |x_j|^{p_1s} |\lambda (x)|^{p'_1} \rmd x
  \le \frac{1}{C_{s,j}}
    \iint_{\bbR^N\times\bbR} \frac{|f(x)-f(x+ae_j)|^{p_1}}{|a|^{1+p_1s}} \rmd x\, \rmd a
  <\infty.
\end{equation}That is, $  |x_j|^{s(p_1-1)} \lambda(x) \in L^{p'_1}$ for all $1\le j\le N$.
Consequently,
$|x|^{s(p_1-1)} \lambda (x)\in L^{p'_1}$.

\medskip\noindent
(A2)\, $s=k\ge 1$ is an integer.
\medskip

For all multi-index $\alpha$ with
$|\alpha| = k$, let $h_{\alpha}$ be the Fourier transform
of $D^{\alpha}f$
and $\psi_{\alpha}(x) = (  i x)^{\alpha}$.
Then $h_{\alpha} = \psi_{\alpha}\hat f $ is a function in $L^{p'_k}$.
Consequently,
\[
| \omega|^s |\hat f(\omega)| \approx
\sum_{|\alpha|=k} |h_{\alpha}(\omega)| \in L^{p'_k}  .
\]

For any $\varphi \in C_c^{\infty}(\bbR^N)$ with $\varphi(x)=0$ in a neighbourhood
of $0$, since $h_{\alpha} = \psi_{\alpha}\hat f $ is a function in $L^{p'_k}$, we have
\begin{align*}
  \langle \psi_{\alpha} \hat f, \varphi  \rangle
= \int_{\bbR^N} h_{\alpha}(x)  \varphi(x) \rmd x .
\end{align*}
Hence
\begin{align*}
  \langle \psi_{\alpha}^2 \hat f, \varphi  \rangle
  =\langle \psi_{\alpha}  \hat f, \psi_{\alpha} \varphi  \rangle
= \int_{\bbR^N} h_{\alpha}(x) \psi_{\alpha}(x) \varphi(x) \rmd x .
\end{align*}
Therefore,
\begin{align*}
  \Big\langle \sum_{|\alpha|=k} \psi_{\alpha}^2 \hat f, \varphi \Big \rangle
= \int_{\bbR^N} \sum_{|\alpha|=k}  h_{\alpha}(x) \psi_{\alpha}(x) \varphi(x) \rmd x .
\end{align*}
Note that $ \sum_{|\alpha|=k} \psi_{\alpha}^2(x) $  has no zero other than
$x=0$.
Substituting $\varphi/ \sum_{|\alpha|=k} \psi_{\alpha}^2 $ for  $\varphi$ in the above equation,
we get
\begin{align*}
\langle \hat f, \varphi  \rangle
 = \Big\langle \sum_{|\alpha|=k} \psi_{\alpha}^2 \hat f, \frac{\varphi}{\sum_{|\alpha|=k} \psi_{\alpha}^2} \Big \rangle
= \int_{\bbR^N}  \frac{ \sum_{|\alpha|=k} h_{\alpha}(x) \psi_{\alpha}(x)}
   {\sum_{|\alpha|=k} \psi_{\alpha}(x)^2} \varphi(x) \rmd x .
\end{align*}
Let
\[
  \lambda(x) =  \frac{ \sum_{|\alpha|=k} h_{\alpha}(x) \psi_{\alpha}(x)}
   {\sum_{|\alpha|=k} \psi_{\alpha}(x)^2}.
\]
We get
 (\ref{eq:s:7})  and
 $|x|^s |\lambda (x)|
 \lesssim \sum_{\alpha} |h_{\alpha}(x)| \in L^{p'_{\lceil s\rceil}}$.

\medskip\noindent
(A3)\, $s>1$ is not an integer.
\medskip

For any multi-index $\alpha$ with $|\alpha|=\lfloor s\rfloor$,
set $\psi_{\alpha}(x) = ( i x)^{\alpha}$.
Substituting $(p_{\lceil s\rceil-1},p_{\lceil s\rceil})$, $\nu_s$  and $D^{\alpha} f$
for $(p_0,p_1)$, $s$ and $f$ respectively in (A1), we get
$|\omega|^{\nu_s(p_{\lceil s\rceil}-1)} (D^{\alpha} f)\,\hat{}(\omega) \in L^{p'_{\lceil s\rceil}}$
 and there exists a function $\lambda_{\alpha}$ such that
 $|\omega|^{\nu_s(p_{\lceil s\rceil}-1)} \lambda_{\alpha}(\omega) \in L^{p'_{\lceil s\rceil}}$
 and
for any   $\varphi \in C_c^{\infty}(\bbR^N)$ with $\varphi(x)=0$ in a neighbourhood
of $0$,
\begin{align*}
\langle \psi_{\alpha}\hat f, \varphi \rangle
&=\langle (D^{\alpha} f)\,\hat{}, \varphi \rangle
= \int_{\bbR^N} \lambda_{\alpha} (x)\varphi(x)  \rmd x .
\end{align*}
With similar arguments as in the previous case, we get
\begin{align*}
\langle \hat f, \varphi  \rangle
= \int_{\bbR^N}   \frac{ \sum_{|\alpha|=\lfloor s\rfloor}\lambda_{\alpha}(x) \psi_{\alpha}(x)}
   {\sum_{|\alpha|=\lfloor s\rfloor} \psi_{\alpha}(x)^2} \varphi(x) \rmd x .
\end{align*}
Let
\[
  \lambda(x) =  \frac{ \sum_{|\alpha|=\lfloor s\rfloor} \lambda_{\alpha}(x) \psi_{\alpha}(x)}
   {\sum_{|\alpha|=\lfloor s\rfloor} \psi_{\alpha}(x)^2}.
\]
We get
 (\ref{eq:s:7})  and
 $|x|^{\lfloor s\rfloor + \nu_s(p_{\lceil s\rceil}-1)} \lambda (x)|
 \lesssim \sum_{\alpha} |x|^{ \nu_s(p_{\lceil s\rceil}-1)}
   |\lambda_{\alpha}(x)| \in L^{p'_{\lceil s\rceil}}$.

Recall that $C_c^{\infty}$ is dense in $\mathscr S$.
For any $\varphi\in\mathscr S$ with  $\varphi(x)=0$ in a neighbourhood of $0$,
 we
see from (\ref{eq:s:7}) that
\begin{equation}\label{eq:s:e3}
  \langle \hat f,  \varphi\rangle
    = \int_{\bbR^N} \lambda(x) \varphi(x)\rmd x.
\end{equation}

Take some $\varphi_0 \in C^{\infty}(\bbR^N)$ such that
$\varphi_0(x) =1$ for $|x|<1$ and $\varphi_0(x)=0$ for $|x|>2$.
We conclude that
\[
  \lim_{\varepsilon\rightarrow 0} \varphi_0\big(\frac{\cdot}{\varepsilon}\big) \hat f
   =0,\qquad \mbox{in} \quad \mathscr S'.
\]
It suffices to show that for any $\varphi \in \mathscr S$,
\begin{equation}\label{eq:s:2}
  \lim_{\varepsilon\rightarrow 0}
  \langle  \varphi_0\big(\frac{\cdot}{\varepsilon}\big) \hat f, \varphi\rangle =0.
\end{equation}

In fact,
\begin{align*}
|\langle  \varphi_0\big(\frac{\cdot}{\varepsilon}\big) \hat f, \varphi\rangle|
&= |\langle   \hat f, \varphi_0\big(\frac{\cdot}{\varepsilon}\big) \varphi\rangle|\\
&=\Big|\Big \langle    f, \Big(\varphi_0\big(\frac{\cdot}{\varepsilon}\big) \varphi\Big)\,\hat{}\,
      \Big\rangle\Big|\\
&=\Big|\Big \langle    f,
\varepsilon^N \hat \varphi_0(\varepsilon\cdot ) * \hat \varphi
  \Big\rangle\Big|\\
&\le \|f\|_{L^{p_0}}
   \cdot   \|\varepsilon^N \hat \varphi_0(\varepsilon\cdot ) * \hat \varphi\|_{L^{p'_0}} \\
&\le \|f\|_{L^{p_0}}
   \cdot   \|\varepsilon^N \hat \varphi_0(\varepsilon\cdot )\|_{L^{p'_0}} \cdot \| \hat \varphi\|_{L^1} \\
&\rightarrow 0.
\end{align*}
Hence (\ref{eq:s:2}) is true.
Consequently,
\[
    \hat f = \lim_{\varepsilon\rightarrow 0} \Big(1 -\varphi_0\big(\frac{\cdot}{\varepsilon}\big) \Big)
    \hat f  \qquad \mbox{in} \quad \mathscr S'.
\]
It follows that
for any $\varphi\in \mathscr S$,
\begin{align}
 \langle \hat f, \varphi\rangle
 &= \lim_{\varepsilon\rightarrow 0} \Big\langle
 \Big(1 -\varphi_0\big(\frac{\cdot}{\varepsilon}\big) \Big)
    \hat f  , \varphi\Big\rangle  \nonumber \\
 &= \lim_{\varepsilon\rightarrow 0} \Big\langle
     \hat f  , \Big(1 -\varphi_0\big(\frac{\cdot}{\varepsilon}\big) \Big)\varphi\Big\rangle
     \nonumber
     \\
 &= \lim_{\varepsilon\rightarrow 0}
    \int_{\bbR^N} \lambda(x) \Big(1 -\varphi_0\big(\frac{x}{\varepsilon}\big) \Big)\varphi(x)\rmd x
   \nonumber  \\
 &= \lim_{\varepsilon\rightarrow 0}
    \int_{|x|>\varepsilon } \lambda(x) \Big(1 -\varphi_0\big(\frac{x}{\varepsilon}\big) \Big)\varphi(x)\rmd x.
    \label{eq:s:5}
\end{align}
Hence $\hat f = \lim_{\varepsilon\rightarrow 0}
  (1 - \varphi_0(x/\varepsilon)) \lambda(x)$
in distributional sense.

Recall that $|x|^{\beta_s}\lambda(x) \in L^{p'_{\lceil s\rceil}}$.
If
$\beta_sp_{\lceil s\rceil}<N$,
we see from H\"older's inequality that
  $\lambda(x) = |x|^{-\beta_s} |x|^{\beta_s}\lambda(x)\in   L^1(B(0,R))$
for any $R>0$.
Hence $\lambda(x)$ is locally integrable.
This completes the proof.
\end{proof}

We are now ready to prove the main results.
\bigskip

\begin{proof}[Proof of Theorem~\ref{thm:convergence:2}]
Fix some $f\in W_s^{\vec p}$.
By Lemma~\ref{Lm:frac:Sobolev},
there is some $h\in L^2$ such that $\hat f(\omega) =  h(\omega) /|\omega|^s$.

Take some function $\varphi$ such that
$\hat \varphi \in C_c^{\infty}$, $0\le \hat\varphi(\omega)\le 1$,
$\hat \varphi(\omega)=1$ for $|\omega|<1$
and $\hat \varphi(\omega)=0$ for $|\omega|>2$.
Set $\hat f_1 = \hat \varphi\cdot \hat f$ and $f_2 = f - f_1$. Then the decomposition (\ref{eq:s:phi})  is true.

First, we consider the case $s<N/2$.
In this case,  we have
\[
  \hat f_1 (\omega) = \frac{1}{|\omega|^s} 1_{\{|\omega|<2\}}(\omega)
   \cdot \varphi(\omega)  h(\omega)   \in L^1.
\]
It follows from the dominated convergence theorem that
$e^{it(-\Delta)^{a/2}}f_1 (x)$ tends to $f_1(x)$ as $t$ tends to $0$.

On the other hand,
since $\hat f_2(\omega) = (1 - \hat \varphi(\omega))\hat f(\omega) = 0 $
for $|\omega| \le 1$,
we have
\[
  \|\hat f_2 \|_{L^2}
 \le \| |\omega|^{s}\hat f_2(\omega)  \|_{L^2}
 \le \| |\omega|^{s}\hat f(\omega)  \|_{L^2}
 <\infty.
\]
Hence   $f_2\in H^s$.
By the hypothesis, we
get
$\lim_{t\rightarrow 0} e^{it(-\Delta)^{a/2}}f_2 (x)=f_2(x)$, a.e.

Next we consider the case $s\ge N/2$ and $a=2$.

Set $\varphi_0=\varphi$.
Since $1- \varphi_0( \omega/\varepsilon)=0$ for $|\omega|<\varepsilon$,
we rewrite $e^{-it\Delta} f$ as
\begin{align}
   e^{-it\Delta} f(x)
   &=\lim_{\varepsilon\rightarrow 0} \frac{1}{(2\pi)^N}
      \int_{|\omega|>\varepsilon} e^{ i(x\cdot \omega + t |\omega|^2 )}
      \Big( 1- \varphi_0\big(\frac{\omega}{\varepsilon}\big)\Big)\hat f(\omega) \rmd \omega
      .  \label{eq:Schr:3}
\end{align}
Recall that $|\omega|^s\hat f(\omega)\in L^{2}$.
Hence $\hat f  \cdot 1_{\{|\omega|>\varepsilon\}}\in L^2$.
Therefore,
the integral in the above equation is well defined.
Let us show that the limit in (\ref{eq:Schr:3})
exists almost everywhere.
Observe that
\begin{align*}
& \int_{|\omega|>\varepsilon} e^{ i(x\cdot \omega + t |\omega|^2 )}
      \Big( 1- \varphi_0\big(\frac{\omega}{\varepsilon}\big)\Big)\hat f(\omega) \rmd \omega\\
&=    \int_{|\omega|>\varepsilon} e^{ i(x\cdot \omega + t |\omega|^2 )}
      \Big( 1- \varphi_0\big(\frac{\omega}{\varepsilon}\big)\Big)\hat \varphi(\omega)\hat f(\omega) \rmd \omega
  + \int_{|\omega|>1} e^{ i(x\cdot \omega + t |\omega|^2 )}
       \hat f_2(\omega) \rmd \omega   .
\end{align*}
By (\ref{eq:s:5}), we have
\begin{align*}
e^{-it\Delta} f_1(x)&=
\lim_{\varepsilon\rightarrow 0} \frac{1}{(2\pi)^N}
\int_{|\omega|>\varepsilon} e^{ i(x\cdot \omega + t |\omega|^2 )}
      \Big( 1- \varphi_0\big(\frac{\omega}{\varepsilon}\big)\Big)\hat \varphi(\omega)\hat f(\omega) \rmd \omega \\
&
  =  \frac{1}{(2\pi)^N}\langle \hat f,     e^{ i(x\cdot \omega + t |\omega|^2 )}\hat \varphi(\omega)\rangle.
\end{align*}
That is, the limit in (\ref{eq:Schr:3}) exists for
almost all $x$.

Observe that
$e^{ i(x\cdot \omega + t |\omega|^2 )}\hat \varphi(\omega)$ tends to $e^{ ix\cdot \omega}
\hat \varphi(\omega)$ in $\mathscr S$ as $t\rightarrow 0$. We get
\begin{align*}
 \lim_{t\rightarrow 0} e^{-it\Delta} f_1(x)
  &=\lim_{t\rightarrow 0}   \frac{1}{(2\pi)^N}\langle \hat f,     e^{ i(x\cdot \omega + t |\omega|^2 )}\hat \varphi(\omega)\rangle \\
  &= \frac{1}{(2\pi)^N}
  \langle \hat f,     e^{ i x\cdot \omega }\hat \varphi(\omega)\rangle  \\
 &=\langle  f,      \varphi(x-\cdot)\rangle   \\
 &=f*\varphi(x).
\end{align*}

As in the previous case, $f_2\in H^s$. By the hypothesis,
  $\lim_{t\rightarrow 0} e^{it(-\Delta)^{a/2}}f_2 (x)=f_2(x)$, a.e.
Hence $\lim_{t\rightarrow 0} e^{it(-\Delta)^{a/2}}f (x)=f(x)$, a.e.
This completes the proof.
\end{proof}

\bigskip
\begin{proof}[Proof of Theorem~\ref{thm:convergence:p}]
Fix some $f\in W_s^{\vec p}$.
By Lemma~\ref{Lm:frac:Sobolev},
there is some $h\in L^{p'_{\lceil s\rceil}}$ such that $\hat f(\omega) =  h(\omega) /|\omega|^{\beta_s}$.

As in the proof of Theorem~\ref{thm:convergence:2},
we apply the decomposition  (\ref{eq:s:phi})
with $a=2$
for $e^{-it\Delta}f$.

First, we deal with $e^{-it\Delta}f_2 (x)$.
Since $\hat f_2(\omega) = (1 - \hat \varphi(\omega))\hat f(\omega) = 0 $
for $|\omega| \le 1$,
we have
\begin{align}\begin{split}\label{eq:Schr:6}
  \|\hat f_2 \|_{L^2}
  &= \| |\omega|^{-\beta_s} \cdot
   1_{\{|\omega|>1\}}\cdot |\omega|^{\beta_s}\hat f(\omega)  \|_{L^2} \\
& \le\| |\omega|^{-\beta_s} \cdot 1_{\{|\omega|>1\}} \|_{L^r}
 \cdot\| |\omega|^{\beta_s}\hat f(\omega)  \|_{L^{p'_{\lceil s\rceil}}}
 <\infty,
\end{split}\end{align}
where $r>1$ satisfying $1/r    = 1/2-1/p'_{\lceil s\rceil} =
 1/p_{\lceil s\rceil}-1/2 <\beta_s/N$.
More precisely, since
\[
\frac{1}{p_{\lceil s\rceil}} - \frac{1}{2} + \frac{1}{2(N+1)}
=
\frac{1}{p_{\lceil s\rceil}} - \frac{N}{2(N+1)}
   <\frac{\beta_s}{N}.
\]
We have
\[
  \frac{1}{r}
   <\frac{\beta_s-N/(2(N+1))}{N}.
\]
Consequently, there is some $\tau>  N/(2(N+1))$ such that
\[
  \frac{1}{r} < \frac{\beta_s-\tau}{N}.
\]
Applying H\"older's inequality yields
\[
  \| |\omega|^{\tau} \hat f(\omega)\|_{L^2}
   \le     \Big\||\omega|^{-(\beta-\tau)} 1_{\{|\omega|>1\}}\Big\|_{L^r}
    \cdot \Big\||\omega|^{\beta_s}
       \hat f(\omega)\Big\|_{L^{p'_{\lceil s\rceil}}}<\infty.
\]
Thus $f_2\in H^{\tau}$.

Recall that it was
proved in  \cite{Carleson1980,DuGuthLi2017,DuZhang2019}
that $\lim_{t\rightarrow 0} e^{-it\Delta}f (x)=f(x)$, a.e.,
for all $f\in H^{\tau}$ with $\tau > N/(2(N+1))$. Hence
\[
  \lim_{t\rightarrow 0} e^{-it\Delta}f_2 (x)=f_2(x),\quad  a.e.
\]

Next we deal with $ e^{-it\Delta} f_1$.
If  $\beta_s<N/ p_{\lceil s\rceil}$, then we have $\hat f_1 \in L^1$.
It follows from the dominated convergence theorem that
$e^{-it\Delta}f_1 (x)$ tends to $f_1(x)$ as $t$ tends to $0$.

If $\beta_s \ge N/p_{\lceil s\rceil}$,
employing the same arguments as in the case  $s\ge N/2$
in the proof of Theorem~\ref{thm:convergence:2},
we obtain
\begin{align*}
 \lim_{t\rightarrow 0} e^{-it\Delta} f_1(x)
  &=f*\varphi(x).
\end{align*}
Hence $\lim_{t\rightarrow 0} e^{-it\Delta} f(x) = f(x)$,  a.e.
This completes the proof.
\end{proof}


 \end{document}